\newtheorem{theorem}{Theorem}
\newtheorem{lemma}{Lemma}
\newtheorem{definition}{Definition}
\newtheorem{corollary}{Corollary}
\newtheorem{remark}{Remark}
\newtheorem{assumptions}{Assumptions}
\newtheorem{problem}{Problem}
\newcommand{\veps}{\varepsilon}
\newcommand{\la}{\langle}
\newcommand{\ra}{\rangle}
\newcommand{\sr}{\stackrel}
\newcommand{\rar}{\rightarrow}
\newcommand{\tri}{\sr{\triangle}{=}}
\newcommand{\be}{\begin{equation}}
\newcommand{\ee}{\end{equation}}
\newcommand{\bea}{\begin{eqnarray}}
\newcommand{\eea}{\end{eqnarray}}
\newcommand{\bes}{\begin{eqnarray*}}
\newcommand{\ees}{\end{eqnarray*}}
\newcommand{\bi}{\begin{itemize}}
\newcommand{\ei}{\end{itemize}}
\newcommand{\ben}{\begin{enumerate}}
\newcommand{\een}{\end{enumerate}}
\newcommand{\bp}{\begin{problem}}
\newcommand{\ep}{\end{problem}}
\newcommand{\hso}{\hspace{.1in}}
\newcommand{\hst}{\hspace{.2in}}
\newcommand{\noi}{\noindent}
\newcommand{\bc}{\begin{center}}
\newcommand{\ec}{\end{center}}
\begin{document}
%

\title{\bf Centralized Versus Decentralized Team Games of  Distributed Stochastic Differential Decision Systems with Noiseless Information Structures-Part I: General Theory}

\author{ Charalambos D. Charalambous\thanks{C.D. Charalambous is with the Department of Electrical and Computer Engineering, University of Cyprus, Nicosia 1678 (E-mail: chadcha@ucy.ac.cy).}  \: and Nasir U. Ahmed\thanks{N.U Ahmed  is with the School of  Engineering and Computer Science, and Department of Mathematics,
University of Ottawa, Ontario, Canada, K1N 6N5 (E-mail:  ahmed@site.uottawa.ca).}
}

\maketitle

\begin{abstract}
Decentralized optimization of distributed stochastic differential systems has been an active area of research for over half a century. Its formulation utilizing static team and person-by-person optimality criteria is  well investigated. However, the results have not been generalized to nonlinear distributed stochastic differential systems  possibly due  to technical difficulties inherent with   decentralized decision strategies.

In this first part of the two-part paper,  we derive team optimality and person-by-person optimality conditions for distributed stochastic differential systems with different  information structures. The optimality conditions are given in terms of a Hamiltonian system of equations described by a system of coupled backward and forward stochastic differential equations and  a conditional Hamiltonian, under both regular  and relaxed strategies. Our methodology is based on the semi martingale representation theorem and variational methods.   Throughout the presentation  we discuss similarities to optimality conditions of centralized decision making.

\end{abstract}

\noi{\bf Index Terms.} Team and Person-by-Person Optimality, Stochastic Differential Systems, Stochastic Maximum Principle, Relaxed Strategies.



  \section{Introduction}
\label{introduction}
Over the last 50 years many mathematical concepts and procedures were developed to design  optimal  control strategies for stochastic dynamical  systems. We refer to this set of mathematical concepts and procedures as  the "classical theory of stochastic optimization". It has been  utilized extensively to address the questions of   existence of optimal strategies,  and necessary and sufficient optimality conditions for   systems driven by continuous martingale processes (Brownian motion processes), and discontinuous martingale processes (jump processes). It has been  successfully applied to centralized fully observable control problems, meaning   the admissible strategies are functions of a common noiseless measurements of the system  \cite{fleming-rischel1975,elliott1977,bismut1978,ahmed1981,elliott1982,elliott-kohlmann1994,peng1990,yong-zhou1999,ahmed2006},  and to centralized partially observable control systems, meaning the admissible strategies are functions of common noisy measurements of the system  \cite{elliott1977,bensoussan1983,bensoussan1992a,charalambous-hibey1996,ahmed-charalambous2007}. In addition, optimility conditions are derived for infinite dimensional systems and impulsive systems in \cite{ahmed1981,ahmed2005,ahmed2006}. Thus, the classical theory of optimization is developed  on the assumption of centralized decisions or control actions. It  presupposes that all information about the system can be acquired  and accordingly the decision policies (control actions) can be formulated. The basic underlying assumption is that the acquisition of the information is centralized or the information acquired at different  locations  is communicated to each decision maker or control.  \\

When the system model consists of multiple decision makers,  and  the acquisition of information and its processing is decentralized or shared among several locations,  the   decision makers  actions are based on different information.  We call  the information available for such  decisions,  "decentralized information structures or patterns".  When the system model is dynamic, consisting of an interconnection of at least two subsystems, and  the decisions are based on decentralized information structures, we call the overall system  a "distributed system with decentralized information structures". Over the years several specific forms of decentralized information structures   are analyzed mostly in discrete-time
\cite{witsenhausen1968,witsenhausen1971,ho-chu1972,kurtaran-sivan1973,sandell-athans1974,kurtaran1975,varaiya-walrand1978,ho1980,krainak-speyer-marcus1982a,krainak-speyer-marcus1982b,bansal-basar1987,waal-vanschuppen2000}, and more recently \cite{bamieh-voulgaris2005,aicardi-davoli-minciardi1987,nayyar-mahajan-teneketzis2011,vanschuppen2011,lessard-lall2011,mahajan-martins-rotkowitz-yuksel2012}.    However,   at this stage there is no systematic framework  addressing  optimality conditions for distributed systems with decentralized information structures. The absence of such optimization theory raises the question whether  the classical theory of optimization  is limited in mathematical concepts and procedures to deal with decentralized  systems.  \\

In this first part of the two-part investigation, we show that the classical theory of optimization does not have such a limitation.  We consider a team game reward  \cite{marschak1955,radner1962,marschak-radner1972,krainak-speyer-marcus1982a,waal-vanschuppen2000}  and we apply concepts from  the classical theory of optimization to derive    necessary and sufficient optimality conditions for   nonlinear stochastic distributed systems with decentralized information structures. Our methodology utilizes the semi martingale representation  theorem and variational methods recently reported  by the authors in \cite{ahmed-charalambous2012a}.   \\

The optimality conditions developed in this paper can be applied to  many  architectures of distributed systems such as Fig.~\ref{ADC} (see also  \cite{vanschuppen2012}). Each decision maker makes its decision based on local information and exerts control action that affects  the overall distributed system, without allowing  communication between the  local decision makers. Such systems are called  distributed systems with decentralized information structures.  The team formulation of the distributed  system with decentralized information structures,  consists of  an interconnection of $N$ subsystems.
Each subsystem $i$ has its state denoted by  $x^i \in {\mathbb X}^i$,  a local decision maker or control input $u^i \in {\mathbb A}^i$, an exogenous Brownian motion noise input  $W^i \in {\mathbb W}^i$, and a coupling from the other subsystem. \\

\begin{figure}
\begin{center}
\includegraphics[bb=-120 20 700 150,scale=0.5]{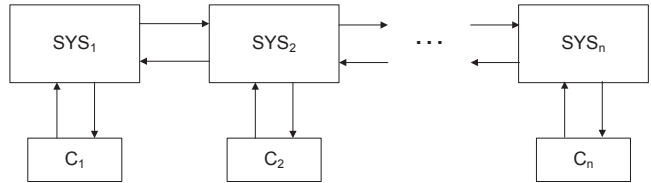}
\caption{ Diagram of architecture for distributed stochastic differential decision systems.}
\label{ADC}
\end{center}
\end{figure}

\noi {\bf Decentralized  Information Structures for  Decision Makers}\\  

 The information structures of the local decision makers  $u^i, i =1,2,\ldots, N$  are defined as follows. For any $t \in [0,T]$, the information structure available to  decision maker (DM) $u^i$ is modeled by  the $\sigma-$algebra ${\cal G}_{0,t}^i$ generated by the observable events associated with the local subsystem. These observables can be generated by nonanticipative functionals of the noise entering the system, nonanticipative functions of  the  state of the system, its  delayed versions, or  any possible  combinations thereof. Let us denote the admissible strategies of $u^i$ with action spaces ${\mathbb A}^i$, by ${\mathbb U}^i[0,T], i=1,2,\ldots, N$ (meaning that  $u^i$ is a nonanticipative measurable functional of the information algebra  ${\cal G}_T^i\tri \{{\cal G}_{0,t}^i: t \in [0,T]\}$  taking values from  ${\mathbb A}^i$. Thus the augmented state, control and noise  of the decentralized system can be written as  $$x\tri (x^1,x^2,\ldots,x^N) \in {\mathbb X}^{(N)}, \hst  u\tri (u^1,u^2, \ldots, u^N) \in {\mathbb A}^{(N)}, \hst W\tri (W^1,W^2,\ldots,W^N) \in {\mathbb W}^{(N)}.$$
Then the overall  system can be  expressed in compact form by the following stochastic It\^o differential equation
\bea
dx(t) = f(t,x(t),u_t)dt + \sigma(t,x_t,u_t)dW(t), \hst x(0)=x_0, \hst t \in (0,T]. \label{ds2}
 \eea
 
\noi {\bf Team Game Pay-off  Functional } \\

The objective is to find a team optimal strategy  $ u^o \equiv (u^{1,o}, \ldots, u^{N, o}) \in \times_{i=1}^N {\mathbb U}^i[0,T]$ at  which  the pay-off functional defined by
\begin{align} J(u^o) \equiv
J(u^{1,o}, \ldots, u^{N,o}) \tri
\inf_{ (u^1\ldots u^N) \in \times_{i=1}^N {\mathbb U}^i[0,T] } {\mathbb E} \Big\{ \int_{0}^T \ell(t,x(t),u(t))dt  +  \varphi(x(T))     \Big\}   \label{ds4}
\end{align} attains its minimum.
\\

  We consider two main classes  of decentralized noiseless information structures;  1) nonanticipative functionals of any subset  of the  sybsystems Brownian motions $\{W^1, \ldots, W^N\}$, called "nonanticipative information structures", and 2) nonanticipative functionals of any subset  of the subsystem states $\{x^1, \ldots, x^N\}$, called "feedback information structures" (see Section~\ref{tpbp}).\\

\noi {\bf Team Game Optimality Conditions}\\

 In Section~\ref{regular} we derive team optimality conditions (Theorem~\ref{theorem7.1}) for pay-off (\ref{ds4}) subject to (\ref{ds2}), under a strong formulation of the  filtered  probability space $\Big(\Omega,{\mathbb F},\{ {\mathbb F}_{0,t}:   t \in [0, T]\}, {\mathbb P}\Big)$. These are summarized below.\\
Define the Hamiltonian
\bes
 {\cal  H}: [0, T] \times {\mathbb X}^{(N)}\times {\mathbb X}^{(N)}\times {\cal L}({\mathbb W}^{(N)}, {\mathbb X}^{N)})\times {\mathbb A}^{(N)} \longrightarrow {\mathbb R}
\ees
   by
   \begin{align}
    {\cal H} (t,\xi,\zeta,M,\nu) \tri    \langle f(t,\xi,\nu),\zeta \rangle + tr (M^*\sigma(t,\xi,\nu))
     + \ell(t,\xi,\nu),  \hst  t \in  [0, T]. \label{h1i}
    \end{align}
    \noi For any $u \in {\mathbb U}^{(N)} \equiv  \times_{i=1}^N {\mathbb U}^i[0,T]$, consider  the adjoint process $\{\psi, Q\}$  and the state $x$ satisfying the following backward  and forward stochastic differential equations respectively,
\begin{align}
d\psi (t)  &= - {\cal H}_x (t,x(t),\psi(t),Q(t),u_t) dt + Q(t)dW(t),  \hst \psi(T)=  \varphi_x(x(T)) , \hso t \in [0,T), \label{h2i}   \\
dx(t)         &= {\cal H}_\psi (t,x(t),\psi(t),Q(t),u_t)     dt + \sigma(t,x(t),u_t) dW(t), \hst        x(0) =  x_0, \hst t \in (0,T]. \label{h3i}
 \end{align}
The stochastic optimality conditions of the team game with decentralized noiseless information structures  are given  below.

 \begin{description}
\item[(1)] {\bf Necessary Conditions.}  Under certain conditions, which are precisely those of the classical theory of optimization,  the following hold.  \\ For  an element $u^o \in {\mathbb U}^{(N)} \equiv  \times_{i=1}^N {\mathbb U}^i[0,T]$ with the corresponding solution $x^o$  to be team optimal, it is necessary  that
the following hold:

\noi The process $\{\psi^o,Q^o\}$ is the  unique solution of the backward stochastic differential equation (\ref{h2i}) corresponding to the pair $\{u^o,x^o\}$ and that they together  satisfy   the  point wise almost sure inequalities with respect to the $\sigma$-algebras ${\cal G}_{0,t}^i$, $ t\in [0, T], i=1, 2, \ldots, N:$

\begin{align}
 {\mathbb E}   \Big\{   {\cal H} &(t,x^o(t),  \psi^o(t),Q^o(t),u_t^{1,o}, \ldots, u_t^{i-1,o}, u^i, u_t^{i+1,o}, \ldots, u_t^{N,o})   |{\cal G}_{0, t}^i \Big\}  \nonumber \\
  \geq & {\mathbb E} \Big\{   {\cal H}(t,x^o(t),  \psi^o(t),Q^o(t),u_t^{1,o}, \ldots, u_t^{i-1,o}, u_t^{i,o}, u_t^{i+1,o}, \ldots, u_t^{N,o})   |{\cal G}_{0, t}^i \Big\} ,  \nonumber \\
\hst \hst &  \forall u^i \in {\mathbb A}^i, \hso a.e. t \in [0,T], \hso {\mathbb P}|_{{\cal G}_{0,t}^i}- a.s., \hso i=1,2,\ldots, N.   \label{h4i}
\end{align}

\item[(2)] {\bf Sufficient Conditions.}  Under global convexity of the Hamiltonian with respect to the state and control variables  and convexity of the terminal pay-off function $\varphi(\cdot)$ the pair  $\{x^o(\cdot), u^o(\cdot)\}$ is optimal if it satisfies (\ref{h4i}).
\end{description}
An important feature obtained  during the derivation  is that the optimality conditions for a team optimal strategy   are  equivalent to the  optimality conditions for a person-by-person optimal strategy. This follows from Theorem~\ref{theorem5.1} and Corollary~\ref{corollary5.1}. \\

The  point to be made regarding the  derivation of the above optimality conditions, is that  we convert the problem into a centralized problem with the associated Hamiltonian system of equations to capture the constraints, and only at the final step, the optimality of decentralized strategies is addressed, by  identifying the conditional variational Hamiltonian which is consistent with the decentralized information structures. That is,  the Hamiltonian system (\ref{h2i}), (\ref{h3i}) is the one corresponding to centralized strategies, while the conditional Hamiltonian (\ref{h4i}) is  the projection of the centralized Hamiltonian onto the  subspace generated by the decentralized information structures.\\

We conclude  the preliminary discussion  on classical optimization theory of centralized strategies versus decentralized strategies, by stating that there are no limitations in applying classical theory of optimization to distributed systems with  decentralized information structures. Rather,  the challenge is in the computation of the conditional   Hamiltonians,  and hence  the optimal strategies.  However, this has also remained  a challenge  for centralized fully or partially observed strategies. \\

\noi The specific objectives of this paper  are the following.

\begin{description}
\item[(a)] Derive  team games necessary conditions of optimality (stochastic maximum principle) for distributed stochastic  differential systems with decentralized information structures.

\item[(b)] Introduce assumptions so that the team games necessary conditions of optimality in {\bf (a)} are also sufficient;

\item[(c)] Derive person-by-person optimality conditions and discuss their relation with team optimality conditions;

\item[(d)] Prove  existence of optimal team and person-by-person strategies for distributed stochastic differential systems with decentralized information structures, using the theory of  relaxed  control strategies, and relate {\bf (a), (b), (c)} to regular decision strategies.

\end{description}
\ \
A detailed investigation of applications of the results of this part to specific linear and nonlinear distributed stochastic differential decision systems is  discussed in the second part of this two-part paper \cite{charalambous-ahmedFIS_Partii2012}  where we  derive the explicit expressions for the optimal decentralized  strategies.\\

 The rest of the paper is organized as follows. In Section~\ref{formulation}  we  formulate the distributed stochastic differential system with decentralized information structures.  In Section~\ref{existence}, we consider the question of  existence of optimal relaxed controls
(decisions). In Section~\ref{smp},  we develop the stochastic optimality conditions for team games with decentralized information structures,  consisting of necessary and sufficient conditions of optimality.  In Section~\ref{regular},  we  specialize the necessary and sufficient optimality conditions  to regular  strategies and  obtain corresponding  necessary and sufficient optimality conditions.  The  paper is concluded with some comments on possible extensions of our results.

 \section{Team Games of Stochastic Differential Systems }
 \label{formulation}

\noi In this section we introduce  the mathematical formulation of distributed stochastic  systems, the  information structures available to the decision makers for their actions,  and the definitions of collaborative decisions via team game optimality and person-by-person optimality. Throughout the terms "decision maker" or "control" are used interchangeably. A stochastic dynamical decision or control system is called distributed if it consists of an interconnection of at least two subsystems and decision makers. The underlying assumption for these distributed systems is that the decision makers actions are based on decentralized information structures. However, the decision makers are allowed to  exchange  information on their  law or strategy deployed,  e.g., the functional form of their strategies but not their actions. \\

 \noi {\bf  Some Basic  Terminologies }\\
   \begin{eqnarray*}
  &\mbox{DM}& \mbox{Abbreviation for "Decision Maker"}\\
 &{\mathbb Z}_N  \tri \{1,2,\ldots, N\} & \mbox{subset of natural numbers}\\
 &s\tri \{s^1, s^2, \ldots,\ldots, s^N\} & \mbox{set consisting of $N$ elements }\\
  & s^{-i}=s \setminus\{s^i\}, \hso s=(s^{-i},s^i)  & \mbox{set $s$ minus $\{s^i\}$  }\\
 &{\cal L}({\cal X},{\cal Y})   & \mbox{ linear transformation mapping a vector space ${\cal X}$} \\   & & \mbox{ into a vector space ${\cal Y}$} \\
 & A^{(i)} & \mbox{$i$th column of a map $A \in  {\cal L}({\mathbb R}^n,{\mathbb R }^m)$, $i=1, \ldots, n$}\\
 & ({\mathbb A}^i,d)& \mbox{separable metric space for player $i \in {\mathbb Z}_N$ actions}\\
&{\mathbb A}^{(N)} \tri \times_{i=1}^N {\mathbb A}^i&    \mbox{product action space of $N$ players}  \\
&{\mathbb U}_{reg}^{i} [0,T]&     \mbox{regular admissible strategy of player $i \in {\mathbb Z}_N$} \\
&{\mathbb U}_{rel}^i[0,T]&    \mbox{relaxed admissible strategy of player $i \in {\mathbb Z}_N$} \\
   \end{eqnarray*}

\noi
  Let $\Big(\Omega,{\mathbb F},  \{ {\mathbb F}_{0,t}:   t \in [0, T]\}, {\mathbb P}\Big)$ denote a complete filtered probability space satisfying the usual conditions  \cite{liptser-shiryayev1977}, that is,  $(\Omega,{\mathbb F}, {\mathbb P})$ is complete, ${\mathbb F}_{0,0}$ contains all ${\mathbb P}$-null sets in ${\mathbb F}$. Note that filtrations $\{{\mathbb F}_{0,t} : t \in [0, T]\}$ are monotone in the sense that ${\mathbb F}_{0,s} \subseteq {\mathbb F}_{0,t}$, $\forall 0\leq s \leq t \leq T$. Moreover,   $\{ {\mathbb F}_{0,t}: t \in [0, T] \}$ is called  right continuous if ${\mathbb F}_{0,t} = {\mathbb F}_{0,t+} \tri \bigcap_{s>t} {\mathbb F}_{0,s}, \forall t \in [0,T)$ and it is called left continuous if  ${\mathbb F}_{0,t} = {\mathbb F}_{0,t-} \tri \sigma\Big( \bigcup_{s<t} {\mathbb F}_{0,s}\Big), \forall t \in (0,T]$. Throughout the paper  filtrations are denoted  by ${\mathbb F}_T \tri  \{ {\mathbb F}_{0,t}:   t \in [0, T]\}$, and they are  assumed to be right continuous and complete. \\

Consider a random process $\{z(t):  t \in [0,T]\}$ defined   on  the filtered probability space $(\Omega,{\mathbb F},\{ {\mathbb F}_{0,t}: t \in [0,T]\}, {\mathbb P})$ and  taking values in a metric space  $({\mathbb Z}, d).$
  The process $\{z(t): t \in [0,T]\}$ is said to be measurable  if the map $(t,\omega)  \rar  z(t,\omega)$ is  ${\cal B }([0,T]) \times{\mathbb  F}/{\cal B}({\mathbb Z})-$measurable where ${\cal B}(\mathbb Z)$ denotes the  Borel algebra of subsets of ${\mathbb Z}.$
  The process $\{z(t): t \in [0,T]\}$ is said to be $\{{\mathbb F}_{0,t}: t \in [0,T]\}-$adapted if for all $t \in [0,T]$, the map $\omega \rar z(t,\omega)$ is  ${\mathbb F}_{0,t}/{\cal B}({\mathbb Z})-$measurable.  The process $\{z(t): t \in [0,T]\}$ is said to be $\{{\mathbb F}_{0,t}: t \in [0,T]\}-$progresively measurable if for all $t \in [0,T]$, the map $(s,\omega) \rar z(s,\omega)$ is  $ {\cal B}([0,t]) \otimes {\mathbb F}_{0,t}/{\cal B}({\mathbb Z})-$measurable.  It can be shown that any stochastic process  $\{z(t):  t \in [0,T]\}$  on a filtered probability space  $(\Omega,{\mathbb F},\{ {\mathbb F}_{0,t}: t \in [0,T]\}, {\mathbb P})$ which is measurable and adapted has a progressively measurable modification \cite{liptser-shiryayev1977}. Unless otherwise specified, we shall say a process $\{z(t):  t \in [0,T]\}$ is $\{{\mathbb F}_{0,t}: t \in [0,T]\}-$adapted if the processes is $\{{\mathbb F}_{0,t}: t \in [0,T]\}-$progressively measurable.\\

In our derivations we make extensive use of the following spaces considered  by the authors in \cite{ahmed-charalambous2012a}. Let
 $L_{{\mathbb F}_T}^2([0,T],{\mathbb R}^n) \subset   L^2( \Omega \times [0,T], d{\mathbb P}\times dt,  {\mathbb R}^n) \equiv L^2([0,T], L^2(\Omega, {\mathbb R}^n)) $ denote the space of ${\mathbb F}_T-$adapted random processes $\{z(t): t \in [0,T]\}$   such that
\bes
{\mathbb  E}\int_{[0,T]} |z(t)|_{{\mathbb R}^n}^2 dt < \infty,
\ees
 which is a sub-Hilbert space of     $L^2([0,T], L^2(\Omega, {\mathbb R}^n))$.
  Similarly, let $L_{{\mathbb F}_T}^2([0,T],  {\cal L}({\mathbb R}^m,{\mathbb R}^n)) \subset L^2([0,T] , L^2(\Omega, {\cal L}({\mathbb R}^m,{\mathbb R}^n)))$ denote the space of  ${\mathbb  F}_{T}-$adapted $n\times m$ matrix valued random processes $\{ \Sigma(t): t \in [0,T]\}$ such that
 \bes
  {\mathbb  E}\int_{[0,T]} |\Sigma(t)|_{{\cal L}({\mathbb R}^m,{\mathbb R}^n)}^2 dt  \tri  {\mathbb E} \int_{[0,T]} tr(\Sigma^*(t)\Sigma(t)) dt < \infty.
  \ees

  \subsection{Regular Strategies}
\label{deterministic}
In this subsection we  consider  measurable vector valued functions, also known as regular  strategies.
We consider the strong formulation.
  Let $\Big(\Omega,{\mathbb F},\{ {\mathbb F}_{0,t}:   t \in [0, T]\}, {\mathbb P}\Big)$ denote a fixed complete filtered probability space on which are based  all random  processes considered in the paper. At this stage we do not specify how $\{{\mathbb F}_{0,t}: t \in [0,T]\}$ came about, but we require that Brownian motions are adapted to this filtration.\\

  \noi {\bf Admissible Decision Maker  Strategies}\\

 The Decision Makers (DM) $\{u^i: i \in {\mathbb Z}_N\}$ take values in a closed  convex subset of  linear  metric spaces $\{({\mathbb M}^i,d): i \in {\mathbb Z}_N\}$.  Let  ${\cal G}_{T}^i \tri \{  {\cal G}_{0,t}^i: t \in [0, T]\} \subset \{{\mathbb F}_{0,t}: t \in [0,T]\}$ denote the information available to DM $i$, $ \forall i \in {\mathbb Z}_N$.
     The admissible set  of regular strategies is  defined by
 \begin{align}
 {\mathbb U}_{reg}^i[0, T] \tri \Big\{   u^i   \in  L_{{\cal G}_T^i}^2([0,T],{\mathbb R}^{d_i})  : \:   u_t^i \in {\mathbb A}^i \subset {\mathbb R}^{d_i}, \: a.e. t \in [0,T], \: {\mathbb P}-a.s. \Big\}, \hso  \forall  i \in {\mathbb Z}_N. \label{cs1a}
     \end{align}
Clearly,   ${\mathbb U}_{reg}^i[0, T]$ is a  closed convex subset of  $L_{{\mathbb F}_T}^2([0,T],{\mathbb R}^{d_i})$, for $i=1,2, \ldots, N$.  That is,  $u^i  : [0,T] \times \Omega \rar {\mathbb A}^i$,  and
   $\{u_t^i: t  \in  [0, T] \}$
    is  ${\cal G}_T^i-$adapted, $\forall  i \in {\mathbb Z}_N$.\\
     An $N$ tuple of DM strategies    is by definition $(u^1,u^2, \ldots, u^N) \in {\mathbb U}_{reg}^{(N)}[0,T] \tri \times_{i=1}^N {\mathbb U}_{reg}^i[0,T]$,  which are nonanticipative  with respect to the information structures $\{  {\cal G}_{0,t}^i: t \in [0, T]\} , i=1,2, \ldots, N$.  Hence, the information structure of each DM, ${\cal G}_T^i$, is decentralized, and may be generated by local or global subsystem observables. Nonanticipative strategies are often utilized when deriving the minimum  principle for centralized stochastic control or decision systems \cite{yong-zhou1999}.  \\

 \noi {\bf Distributed Stochastic  Systems}\\

 Given a fixed  probability space $\Big(\Omega,{\mathbb F},\{ {\mathbb F}_{0,t}:   t \in [0, T]\}, {\mathbb P}\Big),$ a distributed stochastic system consists of an interconnection of $N$ subsystems.  Each subsystem $i$ has its own   state space ${\mathbb R}^{n_i}$, action space ${\mathbb A}^i \subset {\mathbb R}^{d_i}$, an exogenous   noise space ${\mathbb W}^i \tri {\mathbb R}^{m_i}$, and an  initial state $x^i(0)=x_0^i$, identified by the following quantities.
\begin{description}
\item[(S1)] $x^i(0)=x_0^i$:  an ${\mathbb R}^{n_i}$-valued  Random Variable;

\item[(S2)] $\{ W^i(t): t \in [0,T]\}$: an  ${\mathbb R}^{m_i}$-valued  standard Brownian motion which models the exogenous state noise, adapted to ${\mathbb F}_T$, independent of $x^i(0)$.
\end{description}
Each subsystem is described by a finite dimensional system of coupled stochastic differential equations of It\^o type as follows.
 \begin{align}
 dx^i(t) =&  f^i(t,x^i(t),u_t^i) dt  +\sigma^i(t,x^i(t),u_t^i)dW^i(t)
 + \sum_{j=1, j \neq i}^N f^{ij}(t,x^j(t),u_t^j)dt  \nonumber \\
 &+\sum_{j=1, j \neq i}^N \sigma^{ij}(t,x^j(t),u_t^j)dW^j(t) , \hst x^i(0) = x_0^i, \hso  t \in (0,T], \hso i \in {\mathbb Z}_N. \label{eq1ds}
 \end{align}
On the product space $({\mathbb X}^{(N)}, {\mathbb A}^{(N)}, {\mathbb W}^{(N)})$, where ${\mathbb X}^{(N)}\tri \times_{i=1}^N {\mathbb R}^{n_i} , {\mathbb A}^{(N)} \tri \times_{i=1}^N {\mathbb A}^i,  {\mathbb W}^{(N)} \tri \times_{i=1}^N {\mathbb R}^{m_i}$, one
defines the augmented vectors by
\bes
 W \tri (W^1, W^2, \ldots, W^{N}) \in {\mathbb R}^m, \hso  u \tri (u^1, u^2, \ldots, u^{N}) \in {\mathbb R}^d, \hso x \tri (x^1, x^2, \ldots, x^{N}) \in {\mathbb R}^n.
 \ees
Then  on the product space the distributed   system  is described in compact form by
 \begin{eqnarray}
 dx(t) =  f(t,x(t),u_t) dt + \sigma(t,x(t),u_t)~dW(t), \hst x(0) = x_0, \hst  t \in (0,T], \label{eq1}
 \end{eqnarray}
 where $f: [0,T] \times {\mathbb R}^n\times {\mathbb A}^{(N)} \longrightarrow {\mathbb R}^n$ denotes the drift and $\sigma : [0,T] \times {\mathbb R}^n\times {\mathbb  A}^{(N)} \longrightarrow {\cal L}({\mathbb R}^m, {\mathbb R}^n)$ the diffusion coefficients. Note that (\ref{eq1}) is very general since no specific interconnection structure is assumed among the different subsystems. \\


   \noi {\bf Pay-off Functional}\\

Consider the distributed system (\ref{eq1}) with decentralized full information structures.
  \noi Given a $u \in {\mathbb U}_{reg}^{(N)}[0,T],$ we  define the reward or performance criterion by
 \begin{align}
 J(u) \equiv  J(u^1,u^2,\ldots,u^N)   \tri
    {\mathbb E} \biggl\{   \int_{0}^{T}  \ell(t,x(t),u_t) dt + \varphi(x(T)\biggr\},            \label{cfd}
  \end{align}
  where $\ell: [0,T] \times {\mathbb R}^n\times {\mathbb U}^{(N)} \longrightarrow (-\infty, \infty]$ denotes the integrand for the running cost functional  and $\varphi : {\mathbb R}^n \longrightarrow (-\infty, \infty]$,  the terminal cost function.
 Notice that the performance of the decentralized system is measured  by  a single pay-off functional.  The interpretation is that there is a centralized layer where  the quality of  individual   decision makers strategies are  evaluated for  a common goal. Therefore, the underlying assumption concerning the single pay-off instead of  multiple pay-offs  (one for each decision maker) is that the team objective   can be met.

\noi For deterministic as well as  stochastic systems, it is well known that   if the set  ${\mathbb A}^i$ is  not convex, there may not exist any optimal control.  For this reason it is necessary to introduce relaxed strategies as discussed  in the next subsection. \\

\subsection{Relaxed Strategies}
\label{relaxed}
This paper will focus on   relaxed strategies (also called randomized strategies) and later on specialize to  regular strategies (measurable functions).    Therefore, we introduce the formulation  based on  relaxed  strategies (e.g. probability  measures on the action space). \\

 \noi{\bf Distributed Stochastic  Systems}\\

 For each $i \in {\mathbb Z}_N$, let $({\mathbb M}^i,d)$  be a separable metric space with  ${\mathbb A}^i \subset {\mathbb M}^i$ compact, and let  ${\cal B}({\mathbb A}^i)$ denote the Borel subsets of ${\mathbb A}^i$. Let $C({\mathbb A}^i)$ denote the space of continuous functions on ${\mathbb A}^i$.   Let    ${\cal M}({\mathbb A}^i)$ denote the space of regular bounded  signed Borel measures   on ${\cal B}({\mathbb A}^i)$  and ${\cal M}_1({\mathbb A}^i) \subset {\cal M}({\mathbb A}^i)$ the space of regular probability measures.  The  DM strategies with different information structures  on the  time  interval $[0, T]$ will be described  through the topological dual of the Banach space   $L_{{\cal G}_T^i}^1([0, T],C({\mathbb A}^i))$, the $L^1$-space of ${\cal G}_T^i \tri \{{\cal G}_{0,t}^i: t \in [0,T]]\}-$ adapted $C({\mathbb A}^i)$ valued functions, for $i \in {\mathbb Z}_N$.   For each $i \in {\mathbb Z}_N$ the  dual of this space is given by  $L_{ {\cal G}_T^i}^{\infty}([0, T],{\cal M}({\mathbb A}^i))$ which  consists of weak$^*$  measurable  ${\cal G}_T^i$ adapted ${\cal M}({\mathbb  A}^i)$ valued functions.  The DM (control) strategies  are drawn from   the subspace $L_{{\cal G}_T^i}^{\infty}([0, T],{\cal M}_1({\mathbb A}^i)) \subset    L_{{\cal G}_T^i}^{\infty}([0, T],{\cal M}({\mathbb A}^i)).$  For convenience notation we denote this by
  \begin{align}
  {\mathbb U}_{rel}^i[0,T] \tri      L_{{\cal G}_T^i}^{\infty}([0, T],{\cal M}_1({\mathbb A}^i)), \hst   i \in {\mathbb Z}_N, \label{rc1}
     \end{align} 
     and the team strategies by the product space
     \bes
          {\mathbb U}_{rel}^{(N)}[0,T] \tri \times_{i=1}^N {\mathbb U}_{rel}^i[0,T] , \hst {\cal M}_1({\mathbb A}^{N)})\tri \times_{i=1}^N {\cal M}_1({\mathbb A}^i).
           \ees

\noi Thus, for any $i  \in {\mathbb Z}_N$, given the information ${\cal G}_{ T}^i$, player  $ \{u_t^i: t \in [0, T]\}$ is a stochastic kernel (conditional distribution) defined by
\bes
u_t^i(\Gamma)=  q_t^i(\Gamma | {\cal G}_{0, t}^i), \hst  \mbox{for} \hso t \in [0, T], ~\mbox{and }\hst \forall \Gamma \in {\cal B}({\mathbb A}^i).
\ees
Clearly,  for each $ i \in {\mathbb Z}_N$ and for  every $\varphi \in C({\mathbb A}^i)$ the process
\bes
\int_{{\mathbb A}^i} \varphi(\xi) u_t^i(d\xi) = \int_{{\mathbb A}^i} \varphi(\xi) q_t^i(d\xi | {\cal G}_{0,t}^i ), \hst t \in [0,T],
\ees
is ${\cal G}_{T}^i-$ progressively measurable. Given a $u \in {\mathbb U}_{rel}^{(N)}[0,T]$, the  distributed  system is  written in compact form as
\begin{align}
dx(t)  =  f(t,x(t),u_t)dt + \sigma(t,x(t),u_t) dW(t), \hst   x(0) = x_0, \hso t \in [0,T],  \label{sd1}
\end{align}
where the drift and diffusion coefficient is now  defined by
\begin{align}
F(t,x,u_t)  \tri  \int_{{\mathbb A}^{(N)}} \Big( b(t,x,\xi^1, \xi^2, \ldots, \xi^N)\Big)  \times_{i=1}^N u_t^i(d\xi^i) dt , \hst   t \in [0,T),\label{pd1}
\end{align} for $F = \{f, \sigma\}, $
\\

\noi {\bf Pay-off Functional}\\

Given a $ u \in {\mathbb U}_{rel}^{(N)}[0,T]$ the performance criterion is defined  by

 \begin{align}
 J(u) & \tri   {\mathbb E}\biggl\{\int_0^T      \ell(t,x(t),u_t) dt + \varphi(x(T))\biggr\}    \label{cf2} \\
& \equiv {\mathbb E}\biggl\{\int_0^T   \int_{{\mathbb A}^{(N)}}    \Big( \ell(t,x(t),\xi^1,\xi^2,\ldots, \xi^N) \Big) \times_{i=1}^N    u_t^i(d\xi^i)  dt + \varphi(x(T))\biggr\}   \label{cf1}
   \end{align}
where  $\ell$ and $\varphi$ are as defined before.

\subsection{Team and Person-by-Person Optimality}
\label{tpbp}
In this section we give the precise definitions of team and person-by-person (i.e., player-by-player) optimality for relaxed and regular strategies.  There are many possible information structures for control strategies  $\{u^i: i \in {\mathbb Z}_N\}$. We consider the following.\\


{\bf (NIS): Nonanticipative Information Structures.}  Decision   $u^i$ is adapted to the filtration
  ${\cal G}_T^i \subset {\mathbb F}_T$ which is  generated by the   $\sigma-$ algebra induced by  any combination of the subsystems Brownian motions and their increments $\{(W^1(t),W^2(t), \ldots, W^N(t)): t \in [0,T]\}, \forall i \in {\mathbb Z}_N$.
  This is often called open loop information, and  it is the one used in classical stochastic control with centralized full information  to derive the maximum principe \cite{yong-zhou1999}.\\

{\bf (FIS):  Feedback Information Structures.}  Decision $u^i$ is adapted
to the filtration ${\cal G}_T^{z^{i}}$ generated by the $\sigma-$algebra   ${\cal G}_{0,t}^{z^{i}} \tri  \sigma\{ z^{i}(s): 0 \leq s \leq t\}, t \in [0,T]$, where  the observables $z^i$ are  nonanticipative measurable functionals of any combination of the states defined  by
\bea
z^i(t) = h^i(t,x), \hst h^i : [0,T] \times C([0,T], {\mathbb R}^n)  \longrightarrow {\mathbb R}^{k_i}, \hso i \in {\mathbb Z}_N. \label{eq1aa}
\eea Note that  the state $x$ and hence the observables $z^i$ may depend on controls.  \\
The set of admissible regular  feedback strategies is defined by
\bea
{\mathbb U}_{reg}^{(N),z}[0,T] \tri \Big\{u \in  {\mathbb U}_{reg}^{(N)}[0,T]: u_t^i \hso \mbox{is} \hso {\cal G}_{0,t}^{z^{i}}-\mbox{measurable},  t \in [0,T], \hso i=1,\ldots, N\Big\}. \label{fstrategies1}
\eea
 Similarly,  the set of admissible relaxed feedback  strategies is defined by
\bea
{\mathbb U}_{rel}^{(N),z}[0,T] \tri \Big\{ u \in  {\mathbb U}_{rel}^{(N)}[0,T]: u^i \in L_{ {\cal G}_{T}^{z^{i}}}^\infty([0,T], {\cal M}_1({\mathbb A^i})), \hso i=1,\ldots, N\Big\}. \label{fstrategies2}
\eea

 \noi One might be tempted to believe that nonanticipative strategies  might be restrictive, because they are not explicitly described in terms of feedback.  We will show that this is not true. In fact  such strategies cover a large number of interesting problems. \\

 \begin{problem}(Team Optimality)
 \label{problem1}\\

{\bf  (RS): Relaxed Strategies.} Given the  pay-off functional (\ref{cf2}),   constraint (\ref{sd1})   the  $N$ tuple of relaxed strategies   $u^o \tri (u^{1,o}, u^{2,o}, \ldots, u^{N,o}) \in {\mathbb U}_{rel}^{(N)}[0,T]$  is called nonanticipative team optimal if it satisfies
 \bea
 J(u^{1,o}, u^{2,o}, \ldots, u^{N,o}) \leq J(u^1, u^2, \ldots, u^N),  \hst \forall  u\tri (u^1, u^2, \ldots, u^N) \in {\mathbb U}_{rel}^{(N)}[0,T] \label{cfd1a}
 \eea
 Any $u^o   \in {\mathbb U}_{rel}^{(N)}[0,T]$ satisfying (\ref{cfd1a})
is called an optimal  relaxed decision strategy (or control) and the corresponding $x^o(\cdot)\equiv x(\cdot; u^o(\cdot))$ (satisfying (\ref{sd1})) the  optimal state process.\\
Similarly,  feedback  team optimal strategies  are defined with respect to  $u^o  \in {\mathbb U}_{rel}^{(N),z}[0,T]$ 
\\

{\bf (NRS): Regular Strategies.} Regular nonanticipative team
optimal strategies are  defined with respect to pay-off (\ref{cfd}), constraint (\ref{eq1}),    and    $u^o \in {\mathbb U}_{reg}^{(N)}[0,T]$,  while  feedback team optimal  strategies are defined with respect to    $u^o \in {\mathbb U}_{reg}^{(N),z}[0,T]$. 

  \end{problem}

 By definition, Problem~\ref{problem1}  is a dynamic team problem with each DM having a different information structure (decentralized). To the best of the authors knowledge there seems to have been  no attempt in the literature to address the  Problem~\ref{problem1}.   An alternative approach to handle such problems with decentralized information structures is to restrict the definition of optimality to   the so-called person-by-person (player-by-player)  equilibrium.  \\
 Define
 \bes
 \tilde{J}(v,u^{-i}) \tri J(u^1,u^2,\ldots, u^{i-1},v,u^{i+1},\ldots,u^N)
 \ees

 \begin{problem}(Person-by-Person Optimality)
 \label{problem2}\\

{\bf (RS): Relaxed Strategies.} Given the   pay-off functional (\ref{cf2}),   constraint (\ref{sd1})   the  $N$ tuple of relaxed strategies   $u^o \tri (u^{1,o}, u^{2,o}, \ldots, u^{N,o}) \in {\mathbb U}_{rel}^{(N)}[0,T]$  is called nonanticipative person-by-person optimal  if it satisfies
\begin{align}
 \tilde{J}(u^{i,o}, u^{-i,o}) = J(u^o) \leq \tilde{J}(u^{i}, u^{-i,o}), \hst \forall u^i \in {\mathbb  U}_{reg}^i[0,T], \hso \forall i \in {\mathbb Z}_N. \label{cfd2}
 \end{align}
 Similarly,  feedback  person-by-person optimal strategies are defined with respect to  $u^o \in {\mathbb U}_{rel}^{(N),z}[0,T]$. 
 \\

{\bf (NRS): Regular Strategies.} Regular nonanticipative person-by-person
optimal strategies are   defined with respect to pay-off (\ref{cfd}), constraint (\ref{eq1}),    and    $u^o \in {\mathbb U}_{reg}^{(N)}[0,T]$,  while  feedback  person-by-person optimal  strategies are defined with respect to $u^o \in {\mathbb U}_{reg}^{(N),z}[0,T]$. 
  \end{problem}

The interpretation of (\ref{cfd2}) is that the  variation and hence evaluation (of team optimality)  is done by the central layer and it  is  this layer alone that can determine if the decision for the $i$-th player  is optimal or not.  Even for Problem~\ref{problem2}  the  authors of this paper are not aware of any publication  which addresses necessary and/or sufficient conditions of optimality.  Conditions (\ref{cfd2}) are analogous to the Nash equilibrium strategies of team games consisting of a single pay-off and $N$ DM.  The person-by-person optimal strategy states that none of the $N$  members (possibly  with different  information structures) can deviate unilaterally from the optimal strategy and gain by doing so. The rationale for the restriction to person-by-person optimal strategy is based on the fact that the actions of the $N$ DM
are not communicated to each other, and hence they cannot
do better than restricting attention to this optimal strategy.\\

Problems~\ref{problem1}, \ref{problem2} using  relaxed strategies are  the main problems addressed in this paper, while conclusions for regular strategies are drawn  from these results. Clearly,  any strategy which  is optimal for Problem~\ref{problem1} is also a person-by-person optimal and hence optimal for  Problem~\ref{problem2}. \\

   \section{Existence of  Team Optimal Strategies}
  \label{existence}
As mentioned earlier, not every control  problem admits optimal  regular strategies. However, in many problems relaxed strategies exist under certain mild  assumptions.  In this section we use a similar procedure as the one developed in
   \cite{ahmed-charalambous2012a} for centralized information structures to prove (i) existence of solution of the distributed stochastic  dynamical decision system (\ref{sd1}), and (ii) existence of optimal relaxed strategies  for the Problem~\ref{problem1}. \\

A generalized  sequence $u^{i,\alpha} \in {\mathbb  U}_{rel}^i[0,T]$ is said to converge (in the weak$^*$ topology or)  vaguely  to $u^{i,o},$ written $ u^{i,\alpha}\buildrel v\over\longrightarrow u^{i,o}$, if and only if  for every  $\varphi \in L_{ {\cal G}_T^i}^1([0,T],C({\mathbb A}^i))$
\bes
  {\mathbb  E} \int_{[0,T] \times {\mathbb A}^i}  \varphi_t(\xi) u_t^{i,\alpha}(d\xi) dt \longrightarrow  {\mathbb E} \int_{[0,T] \times {\mathbb A}^i}  \varphi_t(\xi) u_t^{i,o}(d\xi) dt \hst \mbox{as} \hso \alpha \rightarrow \infty, \hst \forall i\in {\mathbb Z}_N.
  \ees

\noi With respect to    the vague (weak$^*$)  topology the set ${\mathbb U}_{rel}^i[0,T]$ is  compact, and from here on we assume that ${\mathbb U}_{rel}^i[0, T], \forall i \in {\mathbb Z}_N$ has been endowed with this vague topology. \\

 Let $B_{{\mathbb F}_T}^{\infty}([0,T], L^2(\Omega,{\mathbb R}^n))$ denote the space of ${\mathbb F}_T$-adapted ${\mathbb R}^n$ valued second order random processes endowed with the norm topology  $\parallel  \cdot \parallel$ defined by
\bes
 \parallel x\parallel^2  \tri \sup_{t \in [0,T]}  {\mathbb E}|x(t)|_{{\mathbb R}^n}^2.
 \ees
To  study the question of existence of solution to ({\ref{sd1}) we use the following assumptions.

\begin{assumptions}
\label{A1-A4}
The drift $f$ and diffusion coefficients $\sigma$ associated with (\ref{sd1}) are defined by the    Borel measurable  maps:
\begin{eqnarray*}
 f: [0,T] \times {\mathbb R}^n \times {\mathbb A}^{(N)} \longrightarrow {\mathbb R}^n , \hst  \sigma: [0,T] \times {\mathbb R}^n \times {\mathbb A}^{(N)} \longrightarrow {\cal L}({\mathbb R}^m, {\mathbb R}^n)
 \end{eqnarray*} and they are continuous in the last two arguments and  assumed to satisfy  the following basic properties:.

\begin{description}
\item[(A0)] $({\mathbb A}^i,d), \forall i \in {\mathbb Z}_N$ are compact.
\end{description}

\noi There exists a $K \in L^{2,+}([0,T], {\mathbb R})$ such that

\begin{description}
\item[(A1)] $|f(t,x,\xi)-f(t,y,\xi)|_{{\mathbb R}^n} \leq K(t) |x-y|_{{\mathbb R}^n}$ uniformly in $\xi \in {\mathbb A}^{(N)}$;

\item[(A2)] $|f(t,x,\xi)|_{{\mathbb R}^n} \leq K(t) (1 + |x|_{{\mathbb R}^n})$ uniformly in $\xi \in {\mathbb A}^{(N)}$

\item[(A3)] $|\sigma(t,x,\xi)-\sigma(t,y,\xi)|_{{\cal L}({\mathbb R}^m, {\mathbb R}^n)} \leq K(t) |x-y|_{{\mathbb R}^n}$ uniformly in  $\xi \in {\mathbb A}^{(N)}$;

\item[(A4)] $|\sigma(t,x,\xi)|_{{\cal L}({\mathbb R}^m, {\mathbb R}^n)} \leq K(t) (1+ |x|_{{\mathbb R}^n})$  uniformly in $\xi \in {\mathbb A}^{(N)}$;

\item[(A5)] $f(t,x,\cdot), \sigma(t,x,\cdot)$ are continuous in $\xi \in {\mathbb A}^{(N)}$, $\forall (t,x) \in [0,T] \times {\mathbb R}^n$.
\end{description}
\end{assumptions}

\noi Assumptions~\ref{A1-A4}, {\bf (A1)-(A4)} are the so-called It\^o conditions for existence and uniqueness of strong solutions (having continuous sample paths) \cite{yong-zhou1999}.\\

\noi The following lemma proves  the  existence of solutions and their continuous dependence on the decision variables.

\begin{lemma}
\label{lemma3.1}
 Suppose Assumptions~\ref{A1-A4} hold. Then for any ${\mathbb  F}_{0,0}$-measurable initial state $x_0$ having finite second moment, and any $u \in {\mathbb U}_{rel}^{(N)}[0,T]$,  the following hold.

\begin{description}

 \item[(1)]  System (\ref{sd1}) has a unique solution   $x \in B_{{\mathbb F}_T}^{\infty}([0,T],L^2(\Omega,{\mathbb R}^n))$  having a continuous modification, that is, $x \in C([0,T],{\mathbb R}^n)$,  ${\mathbb P}-$a.s, $\forall i \in {\mathbb Z}_N$.

\item[(2)]  The solution of  system  (\ref{sd1}) is continuously dependent on the control, in the sense that, as $u^{i, \alpha} \buildrel v \over\longrightarrow u^{i,o}$  in ${\mathbb U}_{rel}^i[0,T]$, $\forall i \in {\mathbb Z}_N$,  $x^\alpha \buildrel s \over\longrightarrow x^o $ in $B_{{\mathbb F}_T}^{\infty}([0,T],L^2(\Omega,{\mathbb R}^n)), \forall i \in {\mathbb Z}_N$.
\end{description}
These statements also hold for feedback  strategies $u \in {\mathbb U}_{rel}^{(N),z^u}[0,T]$. 

\end{lemma}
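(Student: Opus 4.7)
The plan is as follows. For part (1), I first verify that the relaxed coefficients
\[
F(t,x,u_t) = \int_{{\mathbb A}^{(N)}} f(t,x,\xi)\, \times_{i=1}^N u_t^i(d\xi^i), \quad G(t,x,u_t) = \int_{{\mathbb A}^{(N)}} \sigma(t,x,\xi)\, \times_{i=1}^N u_t^i(d\xi^i)
\]
inherit Assumptions~\ref{A1-A4} uniformly in $u$: since each $u_t^i$ is a probability measure and the bounds in (A1)--(A4) are uniform in $\xi$, Jensen's inequality gives for example $|F(t,x,u_t) - F(t,y,u_t)|_{{\mathbb R}^n} \leq K(t)|x-y|_{{\mathbb R}^n}$ and $|F(t,x,u_t)|_{{\mathbb R}^n} \leq K(t)(1+|x|_{{\mathbb R}^n})$. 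The ${\cal G}_T^i$-adaptedness of $u^i$, together with ${\cal G}_T^i \subset {\mathbb F}_T$, makes $t \mapsto F(t,x(t),u_t)$ and $t \mapsto G(t,x(t),u_t)$ $\{{\mathbb F}_{0,t}\}$-progressively measurable whenever $x$ is. The classical It\^o existence theorem (Picard iteration in $B_{{\mathbb F}_T}^{\infty}([0,T], L^2(\Omega,{\mathbb R}^n))$) then yields the unique strong solution, and continuous sample paths follow from the Burkholder-Davis-Gundy inequality via Kolmogorov's continuity criterion.

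For part (2), I set $\Delta x^\alpha \equiv x^\alpha - x^o$ and decompose
\begin{align*}
\Delta x^\alpha(t) &= \int_0^t \bigl[F(s,x^\alpha,u_s^\alpha) - F(s,x^o,u_s^\alpha)\bigr]\,ds + \int_0^t \bigl[G(s,x^\alpha,u_s^\alpha) - G(s,x^o,u_s^\alpha)\bigr]\,dW(s) \\
&\quad + r^\alpha(t) + R^\alpha(t),
\end{align*}
where
\[
r^\alpha(t) \equiv \int_0^t \bigl[F(s,x^o,u_s^\alpha) - F(s,x^o,u_s^o)\bigr]\,ds, \quad R^\alpha(t) \equiv \int_0^t \bigl[G(s,x^o,u_s^\alpha) - G(s,x^o,u_s^o)\bigr]\,dW(s)
\]
collect the entire control dependence at the optimal state. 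Applying (A1), (A3), the It\^o isometry, and Doob's maximal inequality yields
\[
{\mathbb E}\sup_{r \leq t}|\Delta x^\alpha(r)|_{{\mathbb R}^n}^2 \leq C \int_0^t K^2(s)\,{\mathbb E}\sup_{r\leq s}|\Delta x^\alpha(r)|_{{\mathbb R}^n}^2\,ds + C\,\bigl(\sup_{r\leq t}{\mathbb E}|r^\alpha(r)|^2 + \sup_{r\leq t}{\mathbb E}|R^\alpha(r)|^2\bigr),
\]
and Gronwall's lemma reduces the problem to showing that $r^\alpha$ and $R^\alpha$ vanish in $B_{{\mathbb F}_T}^{\infty}([0,T], L^2(\Omega,{\mathbb R}^n))$ as $\alpha \to \infty$.

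The main obstacle is proving this latter convergence, for two reasons: (i) only componentwise vague convergence $u^{i,\alpha} \buildrel v \over\rightarrow u^{i,o}$ is given, whereas the integrand sees the tensor product $\times_{i=1}^N u_s^{i,\alpha}$; (ii) the convergence must be uniform in $(s,\omega)$ after integration. To handle (i), I approximate any $\varphi \in C({\mathbb A}^{(N)})$ uniformly by finite sums of separable functions $\sum_k \prod_i \varphi_k^i$ via Stone-Weierstrass, since each ${\mathbb A}^i$ is compact by (A0); on such separable approximants, the product limit follows by iterating factorwise vague convergence. Continuity of $f,\sigma$ in $\xi$ (A5) places $\xi \mapsto f(s,x^o(s,\omega),\xi)$ in $C({\mathbb A}^{(N)})$ for each $(s,\omega)$. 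To handle (ii), the linear growth (A2), (A4) together with the uniform $L^2$ bound on $x^o$ from (1) gives a dominating envelope $K(s)(1+|x^o(s)|_{{\mathbb R}^n}) \in L^2(\Omega \times [0,T])$, so dominated convergence transfers the pointwise vanishing to convergence in $L^2(\Omega \times [0,T])$ for $r^\alpha$, and the It\^o isometry does the same for $R^\alpha$. Finally, the feedback case $u \in {\mathbb U}_{rel}^{(N),z}[0,T]$ requires no separate argument: ${\cal G}_T^{z^i}$ is itself a sub-filtration of ${\mathbb F}_T$ to which the approximating kernels remain adapted, so the proof carries over verbatim once $x^o$ is taken to be the corresponding closed-loop state.
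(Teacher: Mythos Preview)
Your argument for part (1) is standard and fine, and the decomposition you set up for part (2) is the natural one (and is, as far as one can tell, also what the cited reference \cite{ahmed-charalambous2012a} does in the centralized case, to which the paper simply defers after noting that the product $\times_{i=1}^N {\mathbb U}_{rel}^i[0,T]$ inherits vague compactness). The genuine gap is the sentence ``dominated convergence transfers the pointwise vanishing to convergence in $L^2(\Omega\times[0,T])$''. Vague convergence $u^{i,\alpha}\buildrel v\over\to u^{i,o}$ is weak$^*$ convergence in the dual of $L^1_{{\cal G}_T^i}([0,T],C({\mathbb A}^i))$: it gives convergence only \emph{after} integration in $(t,\omega)$ against an $L^1$ test function, and does \emph{not} yield $F(s,x^o(s,\omega),u_s^\alpha)\to F(s,x^o(s,\omega),u_s^o)$ for a.e.\ $(s,\omega)$. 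Take $N=1$, ${\mathbb A}^1=[0,1]$, $u_t^\alpha=\delta_{\sin^2(\alpha t)}$: this converges vaguely to the constant arcsine law, yet $\int\xi\,u_t^\alpha(d\xi)=\sin^2(\alpha t)$ converges nowhere pointwise in $t$, only weakly in $L^2[0,T]$. There is therefore no ``pointwise vanishing'' to feed into the dominated convergence theorem, and your Stone--Weierstrass step, while it correctly handles the tensor-product issue for a \emph{fixed} test function, does nothing to create pointwise convergence in $(s,\omega)$.

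The difficulty is sharpest for the diffusion term. The It\^o isometry gives ${\mathbb E}|R^\alpha(t)|^2={\mathbb E}\int_0^t|G(s,x^o,u_s^\alpha)-G(s,x^o,u_s^o)|^2\,ds$, and driving this to zero requires \emph{strong} $L^2([0,T]\times\Omega)$ convergence of the integrand, which weak$^*$ convergence of the controls cannot deliver by itself: in the example above with $\sigma(t,x,\xi)=\xi$ and $f\equiv 0$ one gets ${\mathbb E}|x^\alpha(t)-x^o(t)|^2=\tfrac14\int_0^t\cos^2(2\alpha s)\,ds\to t/8\neq 0$. So the Gronwall route exactly as you wrote it cannot close. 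Since the paper gives no self-contained argument and refers entirely to \cite{ahmed-charalambous2012a}, you should look there for the missing ingredient---either a martingale-measure (rather than coefficient-averaged) relaxation of the diffusion, an additional structural hypothesis on $\sigma$, or a compactness/subsequence argument that upgrades the weak limit---rather than rely on a dominated-convergence step that is not available here.
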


\begin{proof} 	Since the class of policies ${\mathbb U}_{rel}^i[0,T]$, $\forall i \in {\mathbb Z}_N$ is compact in the vague topology, then $\times_{i=1}^N {\mathbb U}_{rel}^i[0,T]$ is also compact in this topology. Utilizing this observation the  proof is identical to that of  \cite{ahmed-charalambous2012a}, Lemma~3.1.

\end{proof}

\noi Using the  results of Lemma~\ref{lemma3.1} in the next theorem we establish existence of  a minimizer $u^o \in {\mathbb U}_{rel}^{(N)}[0,T]$ for Problem~\ref{problem1}.  We need  the following   assumptions.

\begin{assumptions}
\label{assumptionscost}
The functions $\ell$ and  $\varphi$ associated with the  pay-off (\ref{cf2}) are  Borel measurable maps:
\bes
\ell: [0,T] \times {\mathbb R}^n\times {\mathbb A}^{(N)} \longrightarrow (-\infty,+\infty], \hst  \varphi:{\mathbb R}^n \longrightarrow (-\infty,+\infty].
\ees
satisfying  the following basic conditions:

\begin{description}
\item[(B1)] $x\longrightarrow \ell(t,x,\xi)$ is  continuous on ${\mathbb R}^n$ for each $t\in [0,T]$, uniformly with respect to $\xi \in {\mathbb A}^{(N)}$;

\item[(B2)] $\exists$ $h \in L_1^+([0,T], {\mathbb R})$ such that for each $t \in [0,T]$,  $|\ell(t,x,\xi)| \leq h(t) (1 + |x|_{{\mathbb R}^n}^2)$;

\item[(B3)] $x \longrightarrow \varphi(x)$ is lower semicontinuous on ${\mathbb R}^n$ and  $\exists$ $c_0,c_1\geq 0$ such that $|\varphi(x)| \leq c_0 + c_1 |x|_{{\mathbb R}^n}^2.$
\end{description}
\end{assumptions}

\noi Now we present  the following existence theorem \cite{ahmed-charalambous2012a}.

\begin{theorem}(Existence of Team Optimal Strategies)
\label{theorem3.2}
Consider Problem~\ref{problem1} and suppose Assumptions~\ref{A1-A4}  and \ref{assumptionscost} hold.  Then there exists a  team  decision $u^o \tri (u^{1,o},u^{2,o}, \ldots, u^{N,o}) \in {\mathbb  U}_{rel}^{(N)}[0,T]$ at  which  $J(u^1, u^2, \ldots, u^N)$ attains its infimum.
Existence also holds for  $u^o  \in {\mathbb  U}_{rel}^{(N),z^u}[0,T]$. 

\end{theorem}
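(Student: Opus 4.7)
The plan is to apply the direct method of the calculus of variations inside the relaxed-strategy framework. The linear-growth bounds (A2), (A4) combined with It\^o's isometry and Gronwall yield a control-uniform second-moment estimate $\sup_{u} {\mathbb E}\sup_{t\in[0,T]}|x(t)|^2 < \infty$, which together with (B2)--(B3) shows $J$ is bounded below, so $m \tri \inf J$ is finite. Take a minimizing sequence $\{u^\alpha\} \subset {\mathbb U}_{rel}^{(N)}[0,T]$ with $J(u^\alpha) \to m$. Since each ${\mathbb U}_{rel}^i[0,T]$ is vaguely compact (as stated just before the theorem) and bounded sets in the vague topology are metrizable (the predual $L^1_{{\cal G}^i_T}([0,T], C({\mathbb A}^i))$ is separable because ${\mathbb A}^i$ is a compact metric space), the product ${\mathbb U}_{rel}^{(N)}[0,T]$ is sequentially vaguely compact. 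Extract a subsequence with $u^{i,\alpha} \buildrel v\over\longrightarrow u^{i,o}$ in each coordinate $i \in {\mathbb Z}_N$ to obtain a candidate $u^o \in {\mathbb U}_{rel}^{(N)}[0,T]$. Lemma~\ref{lemma3.1}(2) then gives strong convergence of the states $x^\alpha \buildrel s\over\longrightarrow x^o$ in $B_{{\mathbb F}_T}^\infty([0,T], L^2(\Omega, {\mathbb R}^n))$.

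It remains to prove $J(u^o) \leq \liminf_\alpha J(u^\alpha)$. For the terminal cost, $x^\alpha(T) \to x^o(T)$ in $L^2(\Omega)$ yields almost sure convergence along a further subsequence; combined with the lower semicontinuity of $\varphi$ from (B3), the quadratic majorant in (B3), and the uniform integrability of $\{|x^\alpha(T)|^2\}$ provided by the moment bound, Fatou's lemma gives ${\mathbb E}\{\varphi(x^o(T))\} \leq \liminf_\alpha {\mathbb E}\{\varphi(x^\alpha(T))\}$.

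For the running cost, the subtle point is that state and relaxed control vary simultaneously inside the integrand. The plan is to split
\begin{align*}
& {\mathbb E}\int_0^T \int_{{\mathbb A}^{(N)}} \ell(t,x^\alpha,\xi) \prod_{i=1}^N u_t^{i,\alpha}(d\xi^i)\, dt \\
& \quad = {\mathbb E}\int_0^T \int_{{\mathbb A}^{(N)}} \ell(t,x^o,\xi) \prod_{i=1}^N u_t^{i,\alpha}(d\xi^i)\, dt \\
& \qquad + {\mathbb E}\int_0^T \int_{{\mathbb A}^{(N)}} \bigl[\ell(t,x^\alpha,\xi) - \ell(t,x^o,\xi)\bigr] \prod_{i=1}^N u_t^{i,\alpha}(d\xi^i)\, dt.
\end{align*}
The difference term vanishes by dominated convergence: the integrand goes to zero pointwise by (B1) and the a.s.\ convergence of $x^\alpha$, and is dominated by $2 h(t)\bigl(1 + \sup_t |x^\alpha(t)|^2 + \sup_t |x^o(t)|^2\bigr)$ via (B2), uniformly integrable by the second-moment bound. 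The remaining term converges to ${\mathbb E}\int_0^T \int_{{\mathbb A}^{(N)}} \ell(t,x^o,\xi)\prod_i u_t^{i,o}(d\xi^i)\, dt$ by the definition of vague convergence: the test map $(t,\omega) \mapsto \ell(t, x^o(t,\omega),\cdot)$ lies in $L^1_{{\mathbb F}_T}([0,T], C({\mathbb A}^{(N)}))$ thanks to (B2) and the $L^2$-integrability of $x^o$, while vague convergence of the marginals propagates to the product measures $\times_i u_t^{i,\alpha}$ via an iterated-integration argument exploiting compactness of each ${\mathbb A}^i$.

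The principal obstacle is this last propagation step: vague convergence of marginal relaxed controls does not automatically transfer to their product paired with a general jointly continuous test function, and must be established by successive one-dimensional arguments that exploit the conditional-kernel structure $u^i_t = q^i_t(\cdot | {\cal G}^i_{0,t})$ together with the compactness of each factor $({\mathbb A}^i,d)$. Combining the three ingredients yields $J(u^o) \leq \liminf_\alpha J(u^\alpha) = m$, so $u^o$ attains the infimum. The feedback variant ${\mathbb U}_{rel}^{(N),z}[0,T]$ is handled by the same argument once one verifies that this subset is vaguely closed, using the strong state convergence $x^\alpha \to x^o$ to pass to the limit in the ${\cal G}^{z^i}_{0,t}$-adaptedness constraint on each coordinate.
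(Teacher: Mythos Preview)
Your proposal is correct and follows exactly the route the paper indicates: vague compactness of ${\mathbb U}_{rel}^{(N)}[0,T]$ together with lower semicontinuity of $J$, the latter obtained via Lemma~\ref{lemma3.1}(2) and the growth/continuity hypotheses (B1)--(B3). The paper itself gives no further detail, deferring the lower-semicontinuity argument to \cite{ahmed-charalambous2012a}; your write-up is in fact a faithful expansion of that deferred argument, and you have correctly flagged the one genuinely delicate point (passing vague convergence of the marginals $u^{i,\alpha}$ to their product against a jointly continuous integrand), which is resolved by a telescoping/Stone--Weierstrass reduction exactly along the lines you sketch.
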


 \begin{proof}  Since the class of control policies  ${\mathbb U}_{rel}^N[0,T]$ 
 is compact in the vague  topology, it suffices to prove that $J(\cdot)$ is lower semicontinuous with respect to this topology.  This  follows precisely  from the same procedure as in \cite{ahmed-charalambous2012a}, Theorem~3.2.
 \end{proof}

We conclude this section by stating that existence of team optimal strategies utilizing decentralized information structures follows directly from analogous results of centralized stochastic control strategies \cite{ahmed-charalambous2007}.

\section{Optimality Conditions for Relaxed Strategies }
\label{smp}
In this section we present the  necessary and sufficient conditions of  optimality  for the team game of Problem~\ref{problem1}.
The derivation of stochastic minimum principle (necessary conditions of optimality) or stochastic Pontryagin's minimum principle  is  based on the martingale representation approach.
For this reason we shall fisrt state   certain fundamental   properties of semi martingales,  which are used  in the derivation.  \\

\begin{definition}
\label{definition4.2}
Let  ${\mathbb F}_T$ denote a complete filtration generated by an ${\mathbb R}^m-$dimensional Brownian motion process $\{W(t): t \in [0,T]\}$.  An ${\mathbb R}^n-$valued  random process $\{m(t): t \in [0,T]\}$ is said to be a square integrable continuous   ${\mathbb F}_T-$semi martingale if and only if  it has a representation

\begin{eqnarray}
 m(t) = m(0) + \int_0^t v(s) ds + \int_0^t \Sigma(s) dW(s), \hst t \in [0,T], \label{eq12}
  \end{eqnarray}
   for some $v \in L_{{\mathbb F}_T}^2([0,T],{\mathbb R}^n)$ and $\Sigma \in L_{{\mathbb F}_T}^2([0,T],{\cal L}({\mathbb R}^m,{\mathbb R}^n))$  and for some ${\mathbb R}^n-$valued ${\mathbb  F}_{0,0}-$measurable  random variable  $m(0)$ having finite second moment. The set of all such semi martingales is denoted by ${\cal S M}^2[0,T]$.
\end{definition}

 \noi We need the  following class of ${\mathbb F}_T-$semi martingales:
   \begin{align}
    {\cal SM}_0^2[0,T] \tri \Big\{ m & \in {\cal S M}^2[0,T]:   m(t)   = \int_0^t v(s) ds + \int_0^t \Sigma(s) dW(s),  \hst   t \in [0,T], \nonumber \\
    &  \mbox{for} \hso v \in L_{{\mathbb F}_T}^2([0,T],{\mathbb R}^n) \hso \mbox{ and} \hso \Sigma \in L_{{\mathbb F}_T}^2([0,T],{\cal L}({\mathbb R}^m,{\mathbb R}^n)) \Big\}.
    \label{eq13}
    \end{align}

\noi Now we present a  fundamental result which is used  in the derivation of  minimum  principle.

\begin{theorem}(Semi martingale Representation)
\label{theorem4.3}
The class of semi martingales ${\cal SM}_0^2[0,T]$ is a real linear vector space and it is a  Hilbert space with respect to the norm topology   $\parallel m\parallel_{{\cal SM}_0^2[0,T]}$ given by
\bes
 \parallel m \parallel_{{\cal SM}^2_0[0,T]}  \tri \Big({\mathbb  E}\int_{[0,T]} |v(t)|_{{\mathbb R}^n}^2 dt + {\mathbb  E} \int_{[0,T]} tr(\Sigma^*(t)\Sigma(t)) dt\Big)^{1/2}.
 \ees
Moreover,  the space ${\cal SM}_0^2[0,T]$ is isometrically isomorphic to the space $$L_{{\mathbb F}_T}^2([0,T],{\mathbb R}^n)\times L_{{\mathbb F}_T}^2([0,T],{\cal L}({\mathbb R}^m,{\mathbb R}^n)).$$

\end{theorem}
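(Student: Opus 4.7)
The plan is to establish the theorem by constructing an explicit isometric isomorphism between $\mathcal{SM}_0^2[0,T]$ and the product Hilbert space $\mathcal{H} \triangleq L_{\mathbb{F}_T}^2([0,T],\mathbb{R}^n)\times L_{\mathbb{F}_T}^2([0,T],\mathcal{L}(\mathbb{R}^m,\mathbb{R}^n))$, and then transferring the Hilbert space structure from $\mathcal{H}$ to $\mathcal{SM}_0^2[0,T]$ via the isomorphism. The map in question is the decomposition map $\Phi : m \mapsto (v,\Sigma)$ obtained from (\ref{eq13}).

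First I would check linearity: given $m_1, m_2 \in \mathcal{SM}_0^2[0,T]$ with decompositions $(v_i,\Sigma_i)$, and scalars $\alpha,\beta \in \mathbb{R}$, the process $\alpha m_1 + \beta m_2$ is manifestly in $\mathcal{SM}_0^2[0,T]$ with decomposition $(\alpha v_1 + \beta v_2,\alpha\Sigma_1 + \beta\Sigma_2)$, showing that $\mathcal{SM}_0^2[0,T]$ is a real linear vector space and that $\Phi$ is linear. Next, well-definedness of $\Phi$ requires uniqueness of the decomposition. Suppose $m \in \mathcal{SM}_0^2[0,T]$ admits two representations with pairs $(v_1,\Sigma_1)$ and $(v_2,\Sigma_2)$; then the difference $N(t) \triangleq \int_0^t (\Sigma_1(s)-\Sigma_2(s))\,dW(s) = \int_0^t (v_2(s)-v_1(s))\,ds$ is simultaneously a continuous square-integrable $\mathbb{F}_T$-martingale and a continuous process of finite variation, both starting at $0$. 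By the standard fact that any continuous martingale of finite variation starting from zero is indistinguishable from $0$, we conclude $\Sigma_1 = \Sigma_2$ in $L_{{\mathbb F}_T}^2([0,T],\mathcal{L}(\mathbb{R}^m,\mathbb{R}^n))$ (by the It\^o isometry applied to $N$), and hence $v_1 = v_2$ in $L_{{\mathbb F}_T}^2([0,T],\mathbb{R}^n)$ (by the uniqueness of the Lebesgue representation of an absolutely continuous function).

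Surjectivity of $\Phi$ is immediate: given any $(v,\Sigma) \in \mathcal{H}$, the process $m(t) \triangleq \int_0^t v(s)\,ds + \int_0^t \Sigma(s)\,dW(s)$ is well-defined because $v \in L_{{\mathbb F}_T}^2([0,T],\mathbb{R}^n)$ is Bochner integrable and $\Sigma \in L_{{\mathbb F}_T}^2([0,T],\mathcal{L}(\mathbb{R}^m,\mathbb{R}^n))$ is It\^o integrable; membership in $\mathcal{SM}_0^2[0,T]$ follows directly from (\ref{eq13}). Hence $\Phi$ is a linear bijection between $\mathcal{SM}_0^2[0,T]$ and $\mathcal{H}$. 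The isometry claim is then a tautology: by definition of the norm on $\mathcal{SM}_0^2[0,T]$,
\begin{equation*}
\|m\|_{\mathcal{SM}_0^2[0,T]}^2 = \mathbb{E}\!\int_0^T |v(t)|_{\mathbb{R}^n}^2\,dt + \mathbb{E}\!\int_0^T \mathrm{tr}(\Sigma^*(t)\Sigma(t))\,dt = \|(v,\Sigma)\|_{\mathcal{H}}^2,
\end{equation*}
so $\Phi$ preserves norms. Positivity, homogeneity, and the triangle inequality for $\|\cdot\|_{\mathcal{SM}_0^2[0,T]}$ now transfer from the corresponding properties on $\mathcal{H}$.

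Finally, since $\mathcal{H}$ is the Cartesian product of two Hilbert spaces and therefore itself a Hilbert space under the inner product $\langle (v_1,\Sigma_1),(v_2,\Sigma_2)\rangle_{\mathcal{H}} = \mathbb{E}\!\int_0^T \langle v_1(t),v_2(t)\rangle\,dt + \mathbb{E}\!\int_0^T \mathrm{tr}(\Sigma_1^*(t)\Sigma_2(t))\,dt$, the isometric isomorphism $\Phi^{-1}$ pulls this inner product back to $\mathcal{SM}_0^2[0,T]$, thereby furnishing it with a Hilbert space structure that induces exactly the stated norm. I expect the main obstacle to be the uniqueness-of-decomposition step: one must carefully justify the "continuous martingale of finite variation vanishes" argument within the class of square-integrable $\mathbb{F}_T$-adapted processes, and this relies on the Brownian filtration together with the It\^o isometry, whereas the remaining steps are routine bookkeeping.
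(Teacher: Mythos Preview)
Your argument is correct: the decomposition map $\Phi:m\mapsto(v,\Sigma)$ is well defined by the standard fact that a continuous local martingale of finite variation starting at zero is indistinguishable from zero, it is clearly linear and surjective, and the norm on ${\cal SM}_0^2[0,T]$ is defined precisely so that $\Phi$ is an isometry onto the product Hilbert space ${\cal H}$; the Hilbert structure then pulls back. The paper does not give its own proof here but simply refers the reader to Theorem~4.3 of \cite{ahmed-charalambous2012a}, so there is no in-paper argument to compare against; your write-up supplies exactly the kind of direct verification one would expect.
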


\begin{proof}  For proof see Theorem 4.3 in  \cite{ahmed-charalambous2012a}.

\end{proof}

 For the  derivation of stochastic minimum principle of  optimality   we shall require stronger regularity conditions for the drift and diffusion coefficients  $\{ b,\sigma\}$, as well as, for the running and terminal pay-offs functions   $\{\ell,\varphi\}.$  These  are given below.

\begin{assumptions}
\label{NC1}
${\mathbb E} |x(0)|_{{\mathbb R}^n}^2 <\infty$ and the maps of $\{f,\sigma,\ell, \varphi\} $ satisfy the following conditions.

\begin{description}

\item[(C1)] The triple $\{f,\sigma,\ell\} $ are measurable in $t \in [0,T]$;

\item [(C2)] The quadruple   $\{f,\sigma,\ell, \varphi\} $ are  once continuously differentiable with respect to the state variable $x \in {\mathbb R}^n$;

\item[(C3)] The first derivatives of $\{f,\sigma\}$ with respect to the state are bounded uniformly on $[0,T] \times {\mathbb R}^n \times {\mathbb A}^{(N)}$.

\end{description}
\end{assumptions}

\noi Consider the Gateaux derivative of $\sigma$ with respect to the  variable  at the point $(t,z,\nu) \in [0,T] \times {\mathbb R}^n\times_{i=1}^N  {\cal M}_1({\mathbb A}
^i)$   in the direction $\eta \in {\mathbb R}^n$ defined by
\bes
   \sigma_x(t,z,\nu; \eta) \tri   \lim_{\varepsilon \rightarrow 0}\frac{1}{\varepsilon} \Big\{ \sigma(t,z + \varepsilon \eta, \nu)- \sigma(t,z,\nu)\Big\}, \hst t \in [0,T].
  \ees
    Note that the map $\eta \longrightarrow \sigma_x(t,z,\nu; \eta)$ is linear, and  it follows from Assumptions~ \ref{NC1}, {\bf (C3)} that  there exists a finite positive number $\beta>0$ such that
    \bes
     |\sigma_x(t,z,\nu; \eta)|_{{\cal L}({\mathbb R}^m,{\mathbb R}^n)} \leq \beta |\eta|_{{\mathbb R}^n},   \hst t \in  [0,T].
     \ees
\noi   In order to present the necessary conditions of optimality we need the so called variational equation.
Let us first introduce the variational equation for nonanticipative information structures. Suppose $u^o \tri (u^{1,o}, u^{2,o}, \ldots, u^{N,o}) \in {\mathbb U}_{rel}^{(N)}[0,T]$ denotes the optimal decision and $u \tri (u^1, u^2, \ldots, u^N) \in {\mathbb U}_{rel}^{(N)}[0,T]$ any other decision.  Since ${\mathbb U}_{rel}^i[0,T]$ is convex $\forall i \in {\mathbb Z}_N$, it is clear that  for any $\varepsilon \in [0,1]$,
\bes
 u_t^{i,\varepsilon} \tri u_t^{i,o} + \varepsilon (u_t^i-u_t^{i,o}) \in {\mathbb U}_{rel}^i[0,T], \hst \forall i \in {\mathbb Z}_N.
 \ees
 Let $x^{\varepsilon}(\cdot)\equiv x^\veps(\cdot; u^\veps(\cdot))$ and  $x^{o}(\cdot) \equiv x^o(\cdot;u^o(\cdot))  \in B_{{\mathbb F}_T}^{\infty}([0,T],L^2(\Omega,{\mathbb R}^n))$ denote the solutions  of the system equation (\ref{sd1})  corresponding to  $u^{\varepsilon}(\cdot)$ and $u^o(\cdot)$, respectively.  Consider the limit
 \bes
  Z(t) \tri \lim_{\varepsilon\downarrow 0}  \frac{1}{\veps} \Big\{x^{\varepsilon}(t)-x^o(t)\Big\} , \hst t \in [0,T].
  \ees
We have the following result characterizing the process $\{Z(t): t \in [0,T]\}$.

\begin{lemma}
\label{lemma4.1}
Suppose Assumptions~\ref{NC1} hold and consider nonanticipative strategies ${\mathbb U}_{rel}^{(N)}[0,T]$.  The process $\{Z(t): t \in [0,T]\}$ as defined above is an element of the Banach space  \\  $B_{{\mathbb F}_T}^{\infty}([0,T],L^2(\Omega,{\mathbb R}^n))$ and it  is the unique solution of the variational stochastic differential equation
 \begin{align}
 dZ(t) &= f_x(t,x^o(t),u_t^o)Z(t)dt + \sigma_x(t,x^o(t),u_t^o; Z(t))~dW(t) \nonumber \\
 &+ \sum_{i=1}^N f(t,x^o(t),u^{-i,o},u_t^i-u_t^{i,o})dt + \sum_{i=1}^N\sigma(t,x^o(t),u_t^{-i,o},u_t^i-u_t^{i,o})dW(t), \hst Z(0)=0. \label{eq9}
 \end{align}
 having a continuous modification.
  \end{lemma}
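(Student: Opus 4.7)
\textbf{Proof plan for Lemma~\ref{lemma4.1}.}

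The plan is to introduce the finite-difference process $Z^\veps(t)\tri \veps^{-1}(x^\veps(t)-x^o(t))$ for $\veps\in(0,1]$, derive an SDE for it whose coefficients converge to those of (\ref{eq9}), and pass to the limit via an energy estimate. By Lemma~\ref{lemma3.1}, both $x^\veps$ and $x^o$ lie in $B_{{\mathbb F}_T}^\infty([0,T],L^2(\Omega,{\mathbb R}^n))$ and $x^\veps\to x^o$ in this norm as $\veps\dar 0$. Subtracting the integral forms of (\ref{sd1}) for $x^\veps$ and $x^o$ and dividing through by $\veps$ gives
\[
Z^\veps(t)=\int_0^t\veps^{-1}\big[f(s,x^\veps,u_s^\veps)-f(s,x^o,u_s^o)\big]\,ds+\int_0^t\veps^{-1}\big[\sigma(s,x^\veps,u_s^\veps)-\sigma(s,x^o,u_s^o)\big]\,dW(s).
\]

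The crucial step is to split each integrand into a state-variation and a control-variation piece via
\[
f(\cdot,x^\veps,u^\veps)-f(\cdot,x^o,u^o)=\big[f(\cdot,x^\veps,u^\veps)-f(\cdot,x^o,u^\veps)\big]+\big[f(\cdot,x^o,u^\veps)-f(\cdot,x^o,u^o)\big],
\]
and identically for $\sigma$. The first bracket equals $\big(\int_0^1 f_x(\cdot,x^o+\lam(x^\veps-x^o),u^\veps)\,d\lam\big)(x^\veps-x^o)$ by the fundamental theorem of calculus, so after division by $\veps$ it is a linear operator acting on $Z^\veps$. The second bracket exploits the fact that in the relaxed formulation (\ref{pd1}) the extended drift is linear in the product control measure: the elementary expansion
\[
\bigotimes_{i=1}^N (u^{i,o}+\veps(u^i-u^{i,o}))=\bigotimes_{i=1}^N u^{i,o}+\veps\sum_{i=1}^N (u^i-u^{i,o})\otimes\bigotimes_{j\neq i} u^{j,o}+O(\veps^2)
\]
yields the \emph{exact} first-order identity
\[
\veps^{-1}\big[f(\cdot,x^o,u^\veps)-f(\cdot,x^o,u^o)\big]=\sum_{i=1}^N f(\cdot,x^o,u^{-i,o},u^i-u^{i,o})+r_f^\veps,
\]
with an analogue for $\sigma$, the remainders $r_f^\veps,r_\sig^\veps$ bounded by a constant times $\veps$ uniformly in $(t,\omega)$ because ${\mathbb A}^{(N)}$ is compact and $f,\sigma$ are continuous in the action by Assumption~\ref{A1-A4}, (A5). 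This reproduces precisely the drift and diffusion of the variational equation (\ref{eq9}).

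Standard linear-SDE theory then gives a unique $Z\in B_{{\mathbb F}_T}^\infty([0,T],L^2(\Omega,{\mathbb R}^n))$ satisfying (\ref{eq9}): the coefficients multiplying $Z$ are Lipschitz with constant $\bet$ from Assumption~\ref{NC1}, (C3), and the control-variation inhomogeneities lie in $L^2(\Omega\times[0,T])$ by the growth bounds (A2), (A4) together with $x^o\in B_{{\mathbb F}_T}^\infty$; a continuous modification follows from Theorem~\ref{theorem4.3}. Subtracting (\ref{eq9}) from the SDE for $Z^\veps$, applying the Burkholder--Davis--Gundy inequality to the stochastic integral, and invoking Gronwall's lemma then yields ${\mathbb E}\sup_{s\le t}|Z^\veps(s)-Z(s)|_{{\mathbb R}^n}^2\to 0$ as $\veps\dar 0$, identifying $Z$ as the sought limit. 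The feedback case is identical, since the admissible set of feedback relaxed strategies is again convex under simultaneous component-wise perturbation.

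The main obstacle is controlling the state-variation error term
\[
\int_0^t\Big(\int_0^1\big[f_x(s,x^o+\lam(x^\veps-x^o),u_s^\veps)-f_x(s,x^o,u_s^o)\big]\,d\lam\Big)Z(s)\,ds
\]
together with its It\^o analogue built from $\sig_x$, and showing both vanish in $L^2$. This requires joint continuity of $f_x$ and $\sig_x$ in $(x,\nu)$ under the product of the norm topology on ${\mathbb R}^n$ and the vague topology on $\times_{i=1}^N {\cal M}_1({\mathbb A}^i)$, combined with the uniform bound from (C3), the strong convergence $x^\veps\to x^o$, and the vague convergence $u^\veps\to u^o$; dominated convergence then closes the estimate. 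The diffusion piece is the more delicate one since $\sig_x(t,x,\nu;\cdot)$ enters only as a linear map in its last argument, so one must verify that the resulting integrand is ${\mathbb F}_T$-progressively measurable and square-integrable before invoking It\^o's isometry.
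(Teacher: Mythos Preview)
Your argument is correct and follows essentially the same route as the paper: form the finite-difference quotient $Z^\veps=\veps^{-1}(x^\veps-x^o)$, identify the limiting linear SDE (\ref{eq9}), and verify convergence. Your treatment is in fact more explicit than the paper's, which largely refers to \cite{ahmed-charalambous2012a} for the passage to the limit; your tensor-product expansion of $\bigotimes_i u^{i,\veps}$ is exactly the mechanism that produces the sum $\sum_i f(\cdot,x^o,u^{-i,o},u^i-u^{i,o})$ and its $\sigma$-analogue, and the BDG--Gronwall estimate is the natural way to close the argument.

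The one genuine difference is in how the continuous modification is obtained. The paper does not invoke Theorem~\ref{theorem4.3}; instead it introduces the random ${\mathbb F}_T$-adapted transition operator $\Psi(t,s)$ of the homogeneous part $dz=f_x(t,x^o,u^o)z\,dt+\sigma_x(t,x^o,u^o;z)\,dW$, observes that $\Psi$ is uniformly bounded ${\mathbb P}$-a.s.\ by (C3), and writes the solution explicitly as $Z(t)=\int_0^t\Psi(t,s)\,d\eta(s)$ where $\eta$ is the semi\-martingale built from the control-variation terms (\ref{eq11}). Continuity of $Z$ then follows directly from continuity of $\eta$ and boundedness of $\Psi$. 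Your route via Theorem~\ref{theorem4.3} is equally valid, but the transition-operator representation is worth knowing: it is reused later in the paper (via the Riesz argument in Theorem~\ref{theorem5.1}) to identify the adjoint pair $(\psi^o,Q^o)$. A minor caution: your closing remark that ``the feedback case is identical'' is not how the paper proceeds---Lemma~\ref{lemma4.1} is stated and proved only for nonanticipative strategies, and the feedback case is reduced to it separately through Theorems~\ref{thm-nf}--\ref{geeq} rather than by repeating the variational computation.
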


\begin{proof}   We closely follow the steps in [33].  Writing   the  system (\ref{sd1}) as an integral equation with solutions  $x^\varepsilon, x^o$ corresponding to controls $u^{\varepsilon},u^{o}$ respectively and taking the  difference  $x^\varepsilon(t)-x^o(t)$  and dividing by $\varepsilon$ and then letting  $\varepsilon \longrightarrow 0$, it can be shown that it converges  for all $ t \in [0,T], {\mathbb P}-a.s.$ to the solution of system (\ref{eq9}). Note that the system  (\ref{eq9}) is a linear stochastic differential equation in $Z$ with non homogeneous terms given by the sum of the last two terms.   Let $\{z(t): t \in [0,T]\}$ denote the solution of  its homogenous part  given by
\begin{align}
dz(t) = f_x(t,x^o(t),u_t^{o})z(t) dt  +   \sigma_x(t,x^o(t),u_t^{i,o}; z(t))dW(t), \hso
z(s) = \zeta , \hso t \in [s, T].
\end{align}
By Assumptions~\ref{NC1} and  Lemma~\ref{lemma3.1} this system has    a unique solution $\{z(t): t \in [s,T]\}$ given by
\bes
z(t) = \Psi(t,s) \zeta, \hst  t \in [s,T],
\ees
 where $\Psi(t,s),  t \in [s, T]$ is the random (${\mathbb F}_T
 -$adapted)  transition operator for the homogenous system. Since the derivatives of $f$ and $\sigma$ with respect to the state are uniformly bounded, the transition operator   $\Psi(t,s),  t \in [s, T]$ is uniformly  ${\mathbb P}-$a.s. bounded (with values in the space of $n\times n$ matrices). \\
 \noi   By Using the random transition operator $\Psi$ we can write the solution of the non homogenous stochastic differential equation (\ref{eq9}) as follows,
  \begin{eqnarray}
 Z(t) =\int_{0}^t \Psi(t,s)d\eta(s), \hst t \in [0,T],
  \label{eq10}
 \end{eqnarray}
 where $\{\eta(t):t \in [0,T]\}$ is the semi martingale  given by the following  Ito differential,
\begin{align}
 d\eta(t) = \sum_{i=1}^N &f(t,x^o(t),u^{-i,o},u_t^i-u_t^{i,o})dt  \nonumber \\
 &+  \sum_{i=1}^N\sigma(t,x^o(t),u^{-i,o},u_t^i-u_t^{i,o})~dW(t), \hst \eta(0) = 0, \hso t \in (0,T].\label{eq11}
 \end{align}
 Note that $ \{\eta(t): t \in [0,T] \}$ is a continuous  square integrable   ${\mathbb F}_T-$adapted  semi martingale.  The fact that it has continuous modification  follows directly from the  representation (\ref{eq10}) and the continuity  of the semi martingale $\{\eta(t) : t \in [0,T]\}$.
\end{proof}

Clearly, the variational equation for nonanticipative strategies ${\mathbb U}_{rel}^{(N)}[0,T]$ is obtained as in centralized control strategies found in  \cite{ahmed-charalambous2012a}. Next, we discuss the variational equation for feedback information structures. For $u\in {\mathbb U}_{rel}^{(N),z^u}[0,T]$ the  variational equation will also involve derivatives of $u$ with respect to the  state trajectory $x$, since such strategies utilize feedback.
To avoid this technicality, we first address the question as to whether optimizing $J(u)$ over nonanticipative information structures is the same as optimizing $J(u)$ over feedback information structures. If this is the case then the variational equation for $u\in {\mathbb U}_{rel}^{(N),z^u}[0,T]$ will be that of $u\in {\mathbb U}_{rel}^{(N)}[0,T]$.   We shall require the following assumption.

\begin{assumptions}
\label{a-nf}
The following holds.
\begin{description}
\item[(E1)] The diffusion coefficient $\sigma$  is independent of $u$ and both   $\sigma(\cdot,\cdot)$ and $\sigma^{-1}(\cdot, \cdot)$  are  uniformly  bounded.
\end{description}
\end{assumptions}
\ \

\noi Under the (additional) Assumptions~\ref{a-nf} we can prove  the following theorem.

\begin{theorem}
\label{thm-nf}
Consider Problem~\ref{problem1} and suppose Assumptions~\ref{A1-A4} and  \ref{a-nf} hold.  Define the $\sigma-$algebras
\bes
{\cal F}_{0,t}^{x(0),W} \tri \sigma\{x(0), W(s): 0\leq s \leq t\}, \hst   {\cal F}_{0,t}^{x^u} \tri \sigma\{x^u(s): 0 \leq s \leq t\}, \hst \forall t \in [0,T].
\ees
\noi Then for all   $u \in {\mathbb U}_{rel}^{(N),x^{u}}[0,T]$ the two $\sigma$-algebras are equivalent  written as an equality,  ${\cal F}_{0,t}^{x(0),W} = {\cal F}_{0,t}^{x^u}, \forall t \in [0,T]$.
\end{theorem}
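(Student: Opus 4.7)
The plan is to prove the asserted equality of $\sigma$-algebras by establishing the two inclusions separately. The inclusion ${\cal F}_{0,t}^{x^u} \subseteq {\cal F}_{0,t}^{x(0),W}$ is the easier direction and follows immediately from Lemma~\ref{lemma3.1}: under Assumptions~\ref{A1-A4}, equation (\ref{sd1}) admits a strong solution $x^u$ which is $\{{\mathbb F}_{0,t}\}$-adapted, and the feedback strategies $u^i \in L^\infty_{{\cal G}_T^{x^{i}}}$ are by construction ${\cal F}_{0,t}^{x^u}$-measurable. Since $x(0)$ and $W$ are ${\mathbb F}_T$-adapted, the map $\omega \mapsto x^u(s,\omega)$ is ${\cal F}_{0,s}^{x(0),W}$-measurable for each $s \leq t$, which gives the first inclusion.

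The reverse inclusion ${\cal F}_{0,t}^{x(0),W} \subseteq {\cal F}_{0,t}^{x^u}$ is where Assumption~\ref{a-nf} is essential. The key observation is that, because $\sigma$ does not depend on $u$ and $\sigma^{-1}$ is bounded, equation (\ref{sd1}) can be inverted to recover $W$ pathwise from $x^u$. Specifically, for $u \in {\mathbb U}_{rel}^{(N),x^u}[0,T]$ the drift
\[
F(t,x^u(t),u_t) = \int_{{\mathbb A}^{(N)}} f(t,x^u(t),\xi^1,\ldots,\xi^N) \times_{i=1}^N u_t^i(d\xi^i)
\]
is ${\cal F}_{0,t}^{x^u}$-adapted, since $u_t^i$ is a stochastic kernel measurable with respect to ${\cal G}_{0,t}^{x^{i}} \subseteq {\cal F}_{0,t}^{x^u}$. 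Consequently, solving (\ref{sd1}) for the Brownian increment gives
\[
W(t) = \int_0^t \sigma^{-1}(s,x^u(s)) \, dx^u(s) - \int_0^t \sigma^{-1}(s,x^u(s)) F(s,x^u(s),u_s) \, ds,
\]
where the stochastic integral is well defined by the uniform boundedness of $\sigma^{-1}$ from {\bf (E1)}. Since both integrals on the right are purely functionals of the sample path $\{x^u(s): 0 \le s \le t\}$ (via adapted controls), $W(t)$ is ${\cal F}_{0,t}^{x^u}$-measurable. Combined with $x(0) = x^u(0) \in {\cal F}_{0,0}^{x^u}$, this yields the second inclusion.

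The main obstacle will be to justify rigorously that the stochastic integral $\int_0^t \sigma^{-1}(s,x^u(s)) \, dx^u(s)$ can be constructed as a measurable functional of the path $\{x^u(s): s\le t\}$ alone, rather than requiring the full filtration ${\mathbb F}_T$. The standard route is to approximate $\sigma^{-1}(\cdot,x^u(\cdot))$ by simple processes of the form $\sigma^{-1}(t_k, x^u(t_k)) \mathbf{1}_{(t_k,t_{k+1}]}$ along a refining partition; for each such step process, the Riemann--Stieltjes-like sum $\sum_k \sigma^{-1}(t_k,x^u(t_k))(x^u(t_{k+1}\wedge t) - x^u(t_k\wedge t))$ is manifestly $\sigma\{x^u(s): s\le t\}$-measurable. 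Using boundedness of $\sigma^{-1}$ and the $L^2$-isometry, these sums converge in $L^2({\mathbb P})$ to the It\^o integral, and the pointwise a.s.\ limit along a subsequence inherits measurability with respect to ${\cal F}_{0,t}^{x^u}$ (after usual completion). An analogous but simpler argument handles the Lebesgue integral of the drift. Putting the two inclusions together yields ${\cal F}_{0,t}^{x(0),W}={\cal F}_{0,t}^{x^u}$ for every $t \in [0,T]$, which is the claim of Theorem~\ref{thm-nf}.
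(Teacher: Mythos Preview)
Your proposal is correct and follows exactly the same two-inclusion strategy as the paper: the inclusion ${\cal F}_{0,t}^{x^u}\subseteq{\cal F}_{0,t}^{x(0),W}$ from Lemma~\ref{lemma3.1} (strong solution), and the reverse inclusion by using Assumption~\ref{a-nf} to invert the SDE and recover $W$ from the path of $x^u$. The paper's own proof is in fact far terser than yours---it simply asserts both inclusions without writing out the inversion formula or the measurability argument for the stochastic integral---so your additional detail on approximating $\int_0^t\sigma^{-1}(s,x^u(s))\,dx^u(s)$ by Riemann sums is a welcome elaboration rather than a deviation.
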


\begin{proof} Clearly, by Lemma~\ref{lemma3.1}, we have $  {\cal F}_{0,t}^{x^u} \subset   {\cal F}_{0,t}^{x(0),W}, \forall u \in {\mathbb U}_{rel}^{(N)}[0,T],  t \in [0,T]$. By use of   Assumptions~\ref{a-nf} one can  easily verify  that  ${\cal F}_{0,t}^{x(0),W} \subset  {\cal F}_{0,t}^{x^u}, \forall t \in [0,T].$ This  completes the proof.
\end{proof}

Under the conditions of Theorem~\ref{thm-nf},  for any stochastic kernel $\{u_t^i(\Gamma) \equiv q_t^i(\Gamma| {\cal G}_{0,t}^{x^{i,u}}): t \in [0,T]\}  \in {\mathbb U}_{rel}^{x^{i,u}}[0,T], \Gamma \in {\cal B}({\mathbb A}^i)$ which is ${\cal G}_{0,t}^{x^{i,u}}-$measurable there exists a function $\phi^i(\cdot)$ adapted to a sub-$\sigma-$algebra of ${\cal F}_{0,t}^i \subset {\cal F}_{0,t}^{x(0)
,W}$ such that $u_t^i(\Gamma)= q_t^i(\Gamma| \phi^i(t, x(0), W(\cdot \bigwedge t,\omega))), {\mathbb P}-a.s, \forall t \in [0,T], i=1,\ldots N$. \\Let ${\cal F}_T^i \tri \{ {\cal F}_{0,t}^i: t \in [0,T]\}, {\cal G}_T^{x^{i,u}} \tri \{ {\cal G}_{0,t}^{x^{i,u}}: t \in [0,T]\}, i=1, \ldots, {\mathbb Z}_N$,
and define all such adapted nonanticipative functions by
\begin{align}
\overline{\mathbb U}_{rel}^{i}[0, T] \tri \Big\{  u \in  L_{{\cal F}_{
T}^{ i}}^\infty([0,T],{\cal M}_1( {\mathbb A}^{i}))    : \:   u^i   \in   L_{{\cal G}_T^{ x^{i,u}}}^\infty([0,T],{\cal M}_1( {\mathbb A}^{i}))   \Big\}, \: \forall  i \in {\mathbb Z}_N.\label{naa1}
     \end{align}

\noi Next, we introduce the following additional assumptions.

\begin{assumptions}
\label{a-nf1}
The following holds.
\begin{description}
\item[(E2)] ${\mathbb U}_{rel}^{x^{i,u}}[0, T]$ is dense in  $\overline{\mathbb U}_{rel}^{i}[0, T], \forall i \in {\mathbb Z}_N$.

\end{description}
\end{assumptions}

\noi Under the additional Assumptions~\ref{a-nf1} we can prove  the following result.

\begin{theorem}
\label{thm-nf1}
Consider Problem~\ref{problem1}  with control strategies from ${\mathbb U}_{rel}^{(N),x^{u}}[0,T].$ Under Assumptions~\ref{A1-A4}, \ref{assumptionscost},  \ref{a-nf1}, and $|\varphi_x(x)+ \ell_x(t,x,u)|_{{\mathbb R}^n} \leq K ( 1 +| x|_{{\mathbb R}^n})$ we have,  \\
\bea
\inf_{ u \in  \times_{i=1}^N \overline{\mathbb U}_{rel}^{i}[0,T]} J(u)=  \inf_{ u \in \times_{i=1}^N {\mathbb U}_{rel}^{z^{i,u}}[0,T]} J(u).\label{equic}
\eea
\end{theorem}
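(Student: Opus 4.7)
The plan is to prove the equality by establishing two inequalities. The inclusion $\inf_{\overline{\mathbb U}^{(N)}} J \leq \inf_{{\mathbb U}^{(N),z^u}} J$ is essentially immediate: by Theorem~\ref{thm-nf}, ${\cal F}_{0,t}^{x(0),W} = {\cal F}_{0,t}^{x^u}$, so every feedback strategy $u^i$ which is ${\cal G}_{0,t}^{x^{i,u}}$-measurable is in particular ${\cal F}_{0,t}^{x(0),W}$-measurable and therefore belongs to $\overline{\mathbb U}_{rel}^{i}[0,T]$. Consequently $\times_{i=1}^N {\mathbb U}_{rel}^{z^{i,u}}[0,T] \subseteq \times_{i=1}^N \overline{\mathbb U}_{rel}^{i}[0,T]$, which yields the first inequality.

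For the reverse direction, I would fix an arbitrary $u \in \times_{i=1}^N \overline{\mathbb U}_{rel}^{i}[0,T]$. By Assumption~\ref{a-nf1}~\textbf{(E2)}, there is a generalized sequence $\{u^\alpha\} \subset \times_{i=1}^N {\mathbb U}_{rel}^{z^{i,u^\alpha}}[0,T]$ with $u^{i,\alpha} \buildrel v\over\longrightarrow u^i$ vaguely for every $i \in {\mathbb Z}_N$. Lemma~\ref{lemma3.1}(2) then gives $x^\alpha \buildrel s\over\longrightarrow x^u$ strongly in $B_{{\mathbb F}_T}^{\infty}([0,T],L^2(\Omega,{\mathbb R}^n))$. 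It remains to show $J(u^\alpha)\to J(u)$. One decomposes the difference as
\begin{align*}
J(u^\alpha) - J(u) &= {\mathbb E}\int_0^T [\ell(t,x^\alpha(t),u_t^\alpha)-\ell(t,x^u(t),u_t^\alpha)]\,dt \\
&\quad + {\mathbb E}\int_0^T [\ell(t,x^u(t),u_t^\alpha)-\ell(t,x^u(t),u_t)]\,dt \\
&\quad + {\mathbb E}\bigl[\varphi(x^\alpha(T))-\varphi(x^u(T))\bigr].
\end{align*}
The first and third terms vanish in the limit by the mean-value theorem combined with the hypothesis $|\varphi_x(x)+\ell_x(t,x,u)|_{{\mathbb R}^n}\leq K(1+|x|_{{\mathbb R}^n})$, the strong convergence $x^\alpha \to x^u$ in $L^2(\Omega)$, and the uniform second-moment bound on $x^\alpha,x^u$ from Lemma~\ref{lemma3.1}(1). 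The middle term vanishes directly from the definition of vague convergence applied to the test integrand $t\mapsto \ell(t,x^u(t),\cdot)$, which belongs to $L^1_{{\mathbb F}_T}([0,T],C({\mathbb A}^{(N)}))$ by the continuity in $\xi$ and the quadratic growth in Assumption~\ref{assumptionscost}~\textbf{(B1), (B2)}. Passing to the infimum over $u$ in $\overline{\mathbb U}_{rel}^{(N)}[0,T]$ then yields $\inf_{{\mathbb U}^{(N),z^u}} J \leq \inf_{\overline{\mathbb U}^{(N)}} J$, completing the equality.

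The main obstacle lies in the $u$-dependent nature of the feedback sigma-algebras ${\cal G}_T^{x^{i,u}}$: the approximating controls $u^\alpha$ live in feedback classes that themselves vary with $\alpha$, so one cannot simply take the vague closure within a single fixed space. Assumption~\ref{a-nf1}~\textbf{(E2)} is the structural hypothesis designed precisely to bridge this gap, and proving it in concrete settings---where one must approximate a nonanticipative Brownian-adapted stochastic kernel by a net of genuine state-feedback kernels, each adapted to its own induced state filtration---is expected to be the technically demanding step. Once (E2) is granted, the continuity argument sketched above is what converts the two optimization problems into equivalent ones, with the role of Assumption~\ref{a-nf} being to ensure the equivalence of the Brownian and state filtrations that underpins the whole construction.
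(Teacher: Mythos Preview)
Your argument is correct and follows the same route as the paper: the paper's proof is the one-line assertion that the result is ``obvious because of the density assumption (E2) and the continuity of $J$ in the vague topology,'' and you have simply unpacked that sentence---the inclusion giving one inequality, and density plus continuity of $J$ (which you establish via the three-term splitting, using Lemma~\ref{lemma3.1}(2) and the linear-growth bound on $\varphi_x+\ell_x$) giving the other. Your added remarks on why (E2) is the genuinely nontrivial structural hypothesis are accurate and go beyond what the paper spells out.
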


\begin{proof} The assertion is obvious because of the density assumption   { (E2)} and the continuity of $J$ in the vague topology.

\end{proof}
\noi The point to be made regarding Theorem~\ref{thm-nf1} is that if $u \in {\mathbb U}_{rel}^{(N),x^{u}}[0,T]$ achieves the infimum of $J(u)$ then it is also optimal with respect to  $\overline{\mathbb U}_{rel}^{(N)}[0,T] \tri \times_{i=1}^N \overline{\mathbb U}_{rel}^{i}[0,T] $. Consequently, the  necessary conditions for feedback information structures $u \in {\mathbb U}_{rel}^{(N),x^{u}}[0,T]$ to be optimal are those for which nonanticipative information structures $u \in \overline{\mathbb U}_{rel}^{(N)}[0,T]$ are optimal.
\\

\noi In the next remark  we give an example for which Assumptions~\ref{a-nf1} hold, and hence Theorem~\ref{thm-nf1} is valid.

\begin{remark}
\label{ss}
Suppose $x^1$ and $x^2$ are governed by the following stochastic differential equations
\begin{align}
dx^1(t)=& f^1(t,x^1(t),u^1(t))dt + \sigma^1(t,x^1(t))dW^1(t), \hst x^1(0)=x_0^1, \label{ss1} \\
dx^2(t)=& f^2(t,x^1(t),x^2(t),u^1(t),u^2(t))dt + \sigma^2(t,x^1(t),x^2(t))dW^2(t), \hst x^2(0)=x_0^2, \label{ss2} \\
z^1(t)=&h^1(t,x^1(t)), \hst z^2(t)=h^2(t,x^1(t),x^2(t)), \hst t \in [0,T], \label{ss3}
\end{align}
where $h^1, h^2$ are measurable, $W^1(\cdot), W^2(\cdot)$ are independent, and $u^1 \in {\mathbb U}_{rel}^{1,z^{1,u^1}}[0,T], u^2 \in {\mathbb U}_{rel}^{2,z^{2,u^2}}[0,T]$. If we further assume that $\{\sigma^i(\cdot, \cdot)\}$ and their inverses  are bounded, then we can find $\overline{\mathbb U}_{rel}^i[0,T], i=1,2$ for which { (E2)} holds, and thus  Theorem~\ref{thm-nf1} holds. The structure of the stochastic dynamics (\ref{ss1}), (\ref{ss2}) can be generalized to more than two coupled systems.
\end{remark}

\noi Next, we introduce the following alternative theorem to Theorem~\ref{thm-nf1}, which does not employ Assumptions~\ref{a-nf1}.

\begin{theorem}
\label{geeq}
Consider Problem~\ref{problem1}  with strategies from ${\mathbb U}_{rel}^{(N),z^{u}}[0,T]$, under Assumptions~\ref{A1-A4}, \ref{assumptionscost}, \ref{existence}, \ref{a-nf} and $|\varphi_x(x)+ \ell_x(t,x,u)|_{{\mathbb R}^n} \leq K ( 1 +| x|_{{\mathbb R}^n})$. \\
Then  ${\mathbb U}_{rel}^{z^{i,u}}[0, T]$ is dense in  $\overline{\mathbb U}_{rel}^{i}[0, T], \forall i \in {\mathbb Z}_N$ and
\bea
\inf_{ u \in  \times_{i=1}^N \overline{\mathbb U}_{rel}^{i}[0,T]} J(u)=  \inf_{ u  \in \times_{i=1}^N  {\mathbb U}_{rel}^{z^{i,u}}[0,T]} J(u). \label{eqcost}
\eea
\end{theorem}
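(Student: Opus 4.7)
The plan is to prove the density ${\mathbb U}_{rel}^{z^{i,u}}[0,T]\hbox{ dense in }\overline{\mathbb U}_{rel}^{i}[0,T]$ directly from Assumption~\ref{a-nf}; once this density is established, the equality of infima in \eqref{eqcost} follows verbatim from the vague-topology continuity of $J$, as in the proof of Theorem~\ref{thm-nf1}. So the substantive task is to remove the density hypothesis (E2) by deriving it from (E1) together with the assumed linear-growth bound $|\varphi_x + \ell_x|_{{\mathbb R}^n}\leq K(1+|x|_{{\mathbb R}^n})$.

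The enabling device is a Girsanov change of measure. Under (E1), $\sigma$ is $u$-independent and both $\sigma,\sigma^{-1}$ are uniformly bounded, so for any admissible team strategy $u\in{\mathbb U}_{rel}^{(N),z^u}[0,T]$ I would introduce the reference measure $\widetilde{\mathbb P}$ via $d{\mathbb P}/d\widetilde{\mathbb P}=\Lambda_T(u)$, the Dol\'eans--Dade exponential of $\int_0^\cdot \sigma^{-1}(s,x(s))f(s,x(s),u_s)\,dW(s)$. Under $\widetilde{\mathbb P}$ the process $\widetilde W(t):=W(t)+\int_0^t \sigma^{-1}(s,x(s))f(s,x(s),u_s)\,ds$ is a standard Brownian motion, the state satisfies the driftless SDE $dx=\sigma(t,x)d\widetilde W$, and therefore both $x$ and the observation $z^{i,u}=h^i(\cdot,x)$ are decoupled from $u$. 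This removes the circular dependence between admissible strategies and their observation filtrations, so that ${\mathcal G}_T^{z^{i,u}}$ becomes a fixed filtration under $\widetilde{\mathbb P}$.

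With this decoupling in place, and recalling from Theorem~\ref{thm-nf} that ${\mathcal F}_{0,t}^{x(0),\widetilde W}={\mathcal F}_{0,t}^{x}$ under $\widetilde{\mathbb P}$, I would approximate any $u^i\in\overline{\mathbb U}_{rel}^{i}[0,T]$ in three steps: (i) discretize in time to reduce $u^i$ to a piecewise-constant (in $t$) strategy depending on finitely many values $x^{i,u}(t_k)$; (ii) project the corresponding random measure onto ${\mathcal G}_{0,t_k}^{z^{i,u}}$ using $\widetilde{\mathbb P}$-conditional expectations, which yields strategies in ${\mathbb U}_{rel}^{z^{i,u}}[0,T]$; (iii) refine the partition $\{t_k\}$. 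The martingale convergence theorem on the increasing family $\bigcup_k {\mathcal G}_{0,t_k}^{z^{i,u}}$ delivers $L^1(\widetilde{\mathbb P})$-convergence of the conditional expectations; pairing against test functions in $L_{{\mathcal G}_T^i}^1([0,T],C({\mathbb A}^i))$ and using the density of simple step functions in this space yields vague convergence under $\widetilde{\mathbb P}$. Boundedness of $\Lambda_T(u)$ in every $L^p$ then transfers vague convergence back to $\mathbb P$, and the linear-growth bound on $\varphi_x+\ell_x$ provides the uniform integrability needed to pass to the limit inside $J$.

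The main obstacle is the rigorous justification of the vague-topology convergence at step (ii): conditional expectations converge only weakly in $L^\infty$, so the pairing with each $\varphi\in L_{{\mathcal G}_T^i}^1([0,T],C({\mathbb A}^i))$ must be handled by approximating $\varphi$ itself by finite-dimensional, step-valued test functions compatible with the filtration refinement in (iii); this two-sided approximation, combined with uniform bounds from $|\sigma^{-1}|_\infty$ and the Cameron--Martin--Girsanov estimates, is where the technical work concentrates. Given density, the equality \eqref{eqcost} follows immediately, since the inclusion ${\mathbb U}_{rel}^{z^{i,u}}[0,T]\subset \overline{\mathbb U}_{rel}^{i}[0,T]$ gives $\geq$, and vague lower semi-continuity of $J$ together with density gives $\leq$.
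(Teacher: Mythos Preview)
Your Girsanov-then-project idea does not resolve the circularity that is the whole point of the theorem. A change of measure leaves the $\sigma$-algebras on $\Omega$ unchanged: the filtration ${\cal G}_T^{z^{i,u}}$ is exactly the same family of sets under $\widetilde{\mathbb P}$ as under $\mathbb P$. What Girsanov does is make the \emph{law} of $x$ control-independent; it does not make the \emph{pathwise} map $\omega\mapsto x^u(\omega)$, and hence the generated filtration, control-independent. So when you project a nonanticipative $u^i$ onto ${\cal G}_T^{z^{i,u}}$ you obtain a strategy $v^i$ that is ${\cal G}_T^{z^{i,u}}$-adapted, not ${\cal G}_T^{z^{i,v}}$-adapted. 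Membership in ${\mathbb U}_{rel}^{z^{i,v}}[0,T]$ requires the latter, because the observation $z^{i,v}=h^i(\cdot,x^v)$ is computed along the trajectory $x^v$ driven by $v$, not along $x^u$. Your step (ii) therefore produces objects in the wrong class, and the density assertion does not follow.

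The paper's argument (an adaptation of Bensoussan's) confronts this circularity head-on by a time-shifted piecewise-constant approximation: for $u\in\overline{\mathbb U}_{rel}^{(N)}[0,T]$ one sets $u_{k,t}$ on $[nk,(n+1)k)$ equal to the average of $u$ over the \emph{previous} block $[(n-1)k,nk)$. The delay is what breaks the loop. One then shows, by induction on $n$ and using the invertibility of $\sigma$ to recover $W$ from $x_k$ via $W(t)=\int_0^t\sigma^{-1}(s,x_k(s))\,dI_k(s)$, that ${\cal F}_{0,t}^{x(0),W}={\cal F}_{0,t}^{x_k}$ on each block, so $u_{k,t}$ is measurable with respect to the state filtration generated by \emph{its own} trajectory $x_k$. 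Hence $u_k\in{\mathbb U}_{rel}^{(N),z^{u_k}}[0,T]$, and $u_k\to u$ vaguely gives the density. No change of measure is used; the argument is entirely about measurability, which is what the density claim is about. If you want to salvage your approach, you would need an additional argument identifying ${\cal G}_T^{z^{i,u}}$ with ${\cal G}_T^{z^{i,v}}$ for the projected $v$, and that essentially requires the same inductive filtration-reconstruction step the paper uses.
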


\begin{proof} The derivation is based on  \cite{bensoussan1981} but extended to relaxed strategies.   By Theorem~\ref{thm-nf}, for any $u^i \in {\mathbb U}_{rel}^{z^{i,u}}[0,T]$ which is ${\cal G}_T^{z^{i,u}}-$adapted we can define the set $\overline{\mathbb U}_{rel}^i [0,T], i=1, \ldots, N$ via (\ref{naa1}). For any  $u \in \overline{\mathbb U}_{rel}^{(N)}[0,T] \tri \times_{i=1}^N \overline{\mathbb U}_{rel}^i[0,T],$  $k =\frac{T}{M}$, and   any  test function  $\phi \in  C({\mathbb A}^{(N)}),$    define
\bea
u_{k,t}[\phi] \tri \left\{ \begin{array}{cccc} \int_{{\mathbb A}^{(N)}} \phi(\xi) u_0(d\xi) & \mbox{for} & 0 \leq t <k & u_0 \in {\mathbb A}^{(N)} \\
\frac{1}{k} \int_{(n-1)k}^{nk}   \int_{{\mathbb A}^{(N)}}\phi(\xi) u_s(d\xi) ds & \mbox{for} & nk \leq t < (n+1)k, & n=1,\ldots, M-1. \hso  \end{array} \right.
\eea
Clearly  $u_{k} \in \overline{\mathbb U}_{rel}^{(N)}[0,T] $, and $u_{k} \longrightarrow u$ in $
L_{{\cal F}_T}^\infty([0,T], {\cal M}_1({\mathbb A}^{(N)} ))$ in the weak star sense. We need to show that $u_{k} \in {\mathbb U}^{(N),z^{u_k}}[0,T]$. Let $x_k$ denote the trajectory corresponding to $u_k$, and ${\cal F}_{0,t}^{x_k^u}$ the $\sigma-$algebra generated by $\{x_k(s): 0 \leq s \leq t\}$. Define
\begin{align}
I_k(t) \tri \int_{0}^t \sigma(s,x_k(s))  dW(t)  
= x_k(t) -x(0) -\int_{0}^t f(s, x_k(s), u_k(s)) ds, \label{dc1}
\end{align}
and
\bea
W(t) = \int_{0}^t \sigma(s,x_k(s))^{-1} d I_k(s). \label{dc2}
\eea
Since $u_k \in \overline{\mathbb U}_{rel}^{(N)}[0,T] $, the process  $I_k(t)$ is ${\cal F}_{0,t}^{x_k^u}-$measurable, for $0 \leq t <k$. Hence,
\bea
{\cal F}_{0,t}^{x(0),W}= {\cal F}_{0,t}^{x_k^u}, \hst 0\leq t \leq k. \label{dc3}
\eea
Therefore, $u_{k,t}$ is ${\cal F}_{0,t}^{x_k^u}-$ measurable for $k \leq t \leq 2k$.  From the above equations it follows that (\ref{dc3}) also holds for $k \leq t \leq 2k$, and by induction that ${\cal F}_{0,t}^{x(0),W} = {\cal F}_{0,t}^{x_k^u}, \forall t \in [0,T]$. Therefore,  $u_{k,t}^i $ is also (weak star)  measurable with respect to ${\cal F}_{0,t}^{x_k^u}$. Hence , for any $u_t^i$  which is (weak star)  measurable with respect to a nonanticipative functional $z^i=h^i(t,x)$ there exists a nonanticipative functional of $\{x(0), W\}$ which realizes it.  By Theorem~\ref{thm-nf1} the derivation is complete. \end{proof}.

Before we prove the optimality conditions we define the Hamiltonian system of equations.\\
\noi The  Hamiltonian is a real valued function
\bes
 {\mathbb  H}: [0, T] \times {\mathbb R}^n\times {\mathbb R}^n\times {\cal L}({\mathbb R}^m,{\mathbb R}^n)\times   {\cal  M}_1({\mathbb A}^{(N)}) \longrightarrow {\mathbb R}
\ees
  given  by
   \begin{align}
    {\mathbb H} (t,\xi,\zeta,M,\nu) \tri    \langle f(t,\xi,\nu),\zeta \rangle + tr (M^*\sigma(t,\xi,\nu))
     + \ell(t,\xi,\nu),  \hst  t \in  [0, T]. \label{h1}
    \end{align}
    \noi For any $u \in {\mathbb U}_{rel}^{(N)}[0,T]$, the adjoint process is $(\psi,Q) \in  L_{{\mathbb F}_T}^2([0,T], {\mathbb R}^n)\times L_{{\mathbb F}_T}^2([0,T] ,{\cal L}({\mathbb R}^m,{\mathbb R}^n))$ satisfies the following backward stochastic differential equation
\begin{align}
d\psi (t)  &= -f_x^{*}(t,x(t),u_t)\psi (t)  dt - V_{Q}(t) dt -\ell_x(t,x(t),u_t) dt + Q(t) dW(t), \hst t \in [0,T),    \nonumber    \\
&=- {\mathbb H}_x (t,x(t),\psi(t),Q(t),u_t) dt + Q(t)dW(t),      \label{adj1a} \\
 \psi(T) &=   \varphi_x(x(T))  \label{eq18}
 \end{align}
  where  $V_{Q} \in L_{{\mathbb F}_T}^2([0,T],{\mathbb R}^n)$ is   given by  $\langle V_{Q}(t),\zeta\rangle = tr (Q^*(t)\sigma_x(t,x(t),u_t; \zeta)), t \in [0,T]$ (e.g., $V_{Q}(t) = \sum_{k=1}^m \Big(\sigma_x^{(k)}(t,x(t),u_t)\Big)^*Q^{(k)}(t), \hst t \in [0,T],$ $\sigma^{(k)}$ is the $kth$ column of $\sigma$, $\sigma_x^{(k)}$ is the  derivative of $\sigma^{(k)}$ with respect to the state, for $k=1, 2, \ldots, m$, $Q^{(k)}$ is the $kth$ column of $Q$).\\
 In terms of the Hamiltonian, the state process satisfies the stochastic differential equation
\begin{align}
dx(t) &=f(t,x(t),u_t)dt + \sigma(t,x(t),u_t)dW(t), \hst t \in (0, T],  \nonumber  \\
        & = {\mathbb H}_\psi (t,x(t),\psi(t),Q(t),u_t)     dt + \sigma(t,x(t),u_t) dW(t), \label{st1a} \\
        x(0) &=  x_0 \label{st1i}
 \end{align}

\subsection{Necessary Conditions of Optimality}
\label{necessary}
In this section we state and prove the necessary conditions for team optimality. Specifically, given that $u^o \in {\mathbb U}_{rel}^{(N)}[0,T]$ or $u^o \in {\mathbb U}_{rel}^{(N),z^u}[0,T]$ is team optimal, we show that it leads naturally  to   the Hamiltonian system of equations (called necessary conditions).
The derivation is based on the semi martingale representation as in \cite{ahmed-charalambous2012a} with some modifications necessary to admit   decentralized strategies adapted to an arbitrary filtration.  \\

\noi In the following theorem we present  the necessary conditions of optimality for Problem~\ref{problem1}.

 \begin{theorem} (Necessary conditions for team optimality)
 \label{theorem5.1}
Consider Problem~\ref{problem1} under Assumptions~\ref{assumptionscost}, \ref{NC1}.

\begin{description}

\item[(I)] Suppose ${\mathbb F}_T = \sigma\{ x(0),W(t), t\in [0,T]\}$ and ${\mathbb U}_{rel}^{(N)}[0,T]$ is the class of relaxed controls adapted to this  filtration. For  an element $ u^o \in {\mathbb U}_{rel}^{(N)}[0,T]$ with the corresponding solution $x^o \in B_{{\mathbb F}_T}^{\infty}([0,T], L^2(\Omega,{\mathbb R}^n))$ to be team optimal, it is necessary  that
the following conditions  hold.
   \begin{description}

\item[(1)]  There exists a semi martingale  $m^o \in {\cal SM}_0^2[0,T]$ with the intensity process $({\psi}^o,Q^o) \in  L_{{\mathbb F}_T}^2([0,T],{\mathbb R}^n)\times L_{{\mathbb F}_T}^2([0,T],{\cal L}({\mathbb R}^m,{\mathbb R}^n))$.

 \item[(2) ]    The processes $\{u^o,x^o,\psi^o,Q^o\}$ satisfy the  inequality : \begin{align}
  \sum_{i=1}^N {\mathbb E} \int_{0}^{T}   {\mathbb H}(t,x^o(t)\psi^o(t), Q^o(t), u_t^{-i,o},u_t^i-u_t^{i,o})dt
  \geq 0, \hst \forall u \in {\mathbb U}_{rel}^{(N)}[0,T]. \label{eq16}
\end{align}

\item[(3)]  The process $({\psi}^o,Q^o)$  is the  unique solution of the backward stochastic differential equation (\ref{adj1a}), (\ref{eq18}) and  that, for  ${\cal G}_{0,t}^i   \subset {\mathbb F}_{0,t},$ the control  $u^o \in {\mathbb U}_{rel}^{(N)}[0,T]$ satisfies  the  point wise almost sure inequalities.
\begin{align}
& {\mathbb E} \Big\{  {\mathbb H}(t,x^o(t),\psi^0(t),Q^o(t),u_t^{-i,o},\nu^i )|{\cal G}_{0, t}^i \Big\}    \geq   {\mathbb E} \Big\{ {\mathbb H}(t,x^o(t),\psi^o(t),Q^o(t),u_t^{o})|{\cal G}_{0, t}^i \Big\} ,  \nonumber \\
&  \forall \nu^i \in {\cal M}_1({\mathbb A}^i),  a.e. t \in [0,T], {\mathbb P}|_{{\cal G}_{0,t}^i}- a.s., i=1,2,\ldots, N.   \label{eqh35}
\end{align}
\end{description}
\item[(II)] Suppose ${\mathbb F}_T$ is as above,  and  the Assumption~\ref{a-nf1} holds. \noi For  an element $ u^o \in {\mathbb U}_{rel}^{(N),z^u}[0,T]$  with the corresponding solution $x^o \in B_{{\mathbb F}_T}^{\infty}([0,T], L^2(\Omega,{\mathbb R}^n))$ to be team optimal, it is necessary  that
the statements of Part {\bf (I)} hold with ${\cal G}_{0,t}^i$ replaced by ${\cal G}_{0,t}^{z^{i,u}}, \forall t \in [0,T]$.

 \end{description}

\end{theorem}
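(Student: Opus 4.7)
The plan is to reduce the team optimality inequality to a pointwise conditional Hamiltonian inequality by combining a convex perturbation argument (Gateaux differentiation of $J$) with the semi martingale representation theorem (Theorem~\ref{theorem4.3}) and then projecting onto the decentralized information structures.

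\textbf{Step 1 (Gateaux derivative of $J$).} Fix $u^o \in {\mathbb U}_{rel}^{(N)}[0,T]$ and any $u \in {\mathbb U}_{rel}^{(N)}[0,T]$, and consider the convex perturbation $u^\veps \tri u^o + \veps(u-u^o)$ for $\veps \in [0,1]$. Writing $x^\veps$ and $x^o$ for the corresponding trajectories and using Assumptions~\ref{NC1} together with Lemma~\ref{lemma4.1}, I would compute the one-sided derivative of $J$ at $u^o$ in the direction $u-u^o$. Since $u^o$ is optimal, $(d/d\veps)J(u^\veps)|_{\veps = 0^+} \geq 0$, and this yields
\bes
 {\mathbb E}\Big\{\langle \varphi_x(x^o(T)), Z(T)\rangle + \int_0^T  \ell_x(t,x^o(t),u_t^o) Z(t)\,dt \Big\}
  + \sum_{i=1}^N {\mathbb E}\int_0^T \ell(t,x^o(t),u_t^{-i,o},u_t^i - u_t^{i,o})\,dt \geq 0,
\ees
where $\{Z(t)\}$ is the variational process of Lemma~\ref{lemma4.1}.

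\textbf{Step 2 (Adjoint via semi martingale representation).} Introduce the candidate adjoint process $\{\psi^o(t)\}$ as the conditional expectation of $\varphi_x(x^o(T)) + \int_t^T \ell_x(s,x^o,u^o)\,ds$ adjusted by appropriate drift terms, i.e., define $m^o(t)\tri {\mathbb E}\{\varphi_x(x^o(T)) + \int_0^T \ell_x\,ds \mid {\mathbb F}_{0,t}\}$, which is a square integrable ${\mathbb F}_T$-martingale. By Theorem~\ref{theorem4.3}, $m^o \in {\cal SM}_0^2[0,T]$ and admits a unique representation with intensity process $(\psi^o, Q^o)\in L_{{\mathbb F}_T}^2([0,T],{\mathbb R}^n)\times L_{{\mathbb F}_T}^2([0,T],{\cal L}({\mathbb R}^m,{\mathbb R}^n))$ satisfying the backward stochastic differential equation (\ref{adj1a})--(\ref{eq18}). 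I would then apply It\^o's formula to $\langle \psi^o(t), Z(t)\rangle$, use the dynamics (\ref{eq9}) of $Z$ together with the adjoint dynamics, and combine with the inequality of Step~1. The stochastic integrals are true martingales due to the uniform $L^2$-bounds provided by Lemma~\ref{lemma3.1} and the $V_Q$ term involving $\sigma_x$. The remaining terms telescope precisely into the integral inequality
\bes
 \sum_{i=1}^N {\mathbb E}\int_0^T {\mathbb H}(t,x^o(t),\psi^o(t),Q^o(t),u_t^{-i,o},u_t^i - u_t^{i,o})\,dt \geq 0,
\ees
which is (\ref{eq16}).

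\textbf{Step 3 (Localization and conditional pointwise form).} To pass from the integral inequality to the conditional pointwise inequality (\ref{eqh35}), I would fix $i \in {\mathbb Z}_N$, set $u^{-i} = u^{-i,o}$, and restrict the variation of $u^i$ to a needle of length $h>0$ around an arbitrary Lebesgue point $s \in [0,T]$: take $u_t^i = \nu^i \mathbf{1}_A + u_t^{i,o}\mathbf{1}_{A^c}$ for $t \in [s,s+h]$ and $u_t^i = u_t^{i,o}$ otherwise, where $\nu^i \in {\cal M}_1({\mathbb A}^i)$ and $A \in {\cal G}_{0,s}^i$. Because $u^i$ must lie in ${\mathbb U}_{rel}^i[0,T]$, the set $A$ must be ${\cal G}_{0,s}^i$-measurable. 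Dividing the resulting inequality by $h$, sending $h \downarrow 0$, using Lebesgue differentiation, and invoking the defining property of conditional expectation (since $A \in {\cal G}_{0,t}^i$ is arbitrary) yields the required almost sure inequality (\ref{eqh35}). The uniqueness of $(\psi^o, Q^o)$ follows from the standard $L^2$-theory of BSDEs under Assumptions~\ref{NC1}.

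\textbf{Step 4 (Feedback case, Part (II)).} Under Assumptions~\ref{a-nf1}, Theorem~\ref{thm-nf1} shows that optimizing $J$ over ${\mathbb U}_{rel}^{(N),z^u}[0,T]$ is equivalent to optimizing over $\overline{\mathbb U}_{rel}^{(N)}[0,T]$, and Theorem~\ref{thm-nf} gives the pathwise identification ${\cal F}_{0,t}^{x(0),W} = {\cal F}_{0,t}^{x^u}$. I would therefore repeat Steps~1--3 on $\overline{\mathbb U}_{rel}^{(N)}[0,T]$, replacing the sub-$\sigma$-algebra ${\cal G}_{0,t}^i$ in the localization step by ${\cal G}_{0,t}^{z^{i,u}}$; the needle perturbations remain admissible because $A \in {\cal G}_{0,s}^{z^{i,u}}$, and the derivation carries through verbatim.

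\textbf{Main obstacle.} The chief technical difficulty is Step~3, the passage from the global integral inequality to the decentralized pointwise inequality: since each variation must be adapted to its own sub-filtration ${\cal G}_T^i$, one cannot use the usual needle variations measurable with respect to the full filtration ${\mathbb F}_T$. Compatibility of the needle with the prescribed information structure, together with the correct invocation of the tower property of conditional expectation onto ${\cal G}_{0,t}^i$, is the delicate point that converts a centralized Hamiltonian inequality into the conditional Hamiltonian inequality (\ref{eqh35}) characteristic of decentralized team optimality.
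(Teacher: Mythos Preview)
Your overall architecture (convex perturbation, adjoint, It\^o on $\langle \psi^o,Z\rangle$, localization by needle variations adapted to ${\cal G}_T^i$) matches the paper, and Steps~1, 3, 4 are essentially the paper's arguments. The genuine divergence is in Step~2.

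The paper does \emph{not} construct $(\psi^o,Q^o)$ from an explicit martingale of the form $m^o(t)={\mathbb E}\{\varphi_x(x^o(T))+\int_0^T\ell_x\,ds\mid{\mathbb F}_{0,t}\}$. Instead it observes that the composition $\eta\mapsto Z\mapsto L(Z)$ is a bounded linear functional on the Hilbert space ${\cal SM}_0^2[0,T]$ of Theorem~\ref{theorem4.3}, and invokes the Riesz representation theorem to produce a semi martingale $m^o$ with intensity $(\psi^o,Q^o)$ such that $L(Z)=(m^o,\eta)_{{\cal SM}_0^2}$; this immediately gives the variational inequality~(\ref{eq16}). Only afterwards does the paper expand ${\mathbb E}\langle Z(T),\psi^o(T)\rangle$ by It\^o and identify the BSDE that $(\psi^o,Q^o)$ must satisfy. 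Your route is the more textbook one (build the adjoint first, then compute), but as written it has a gap: the martingale you propose has intensity $(0,\tilde Q)$, not $(\psi^o,Q^o)$, and $\tilde Q$ is not the $Q^o$ of (\ref{adj1a}) because the BSDE drift also carries the terms $f_x^*\psi^o$ and $V_{Q^o}$. To make your Step~2 rigorous you must either invoke BSDE well-posedness directly for (\ref{adj1a})--(\ref{eq18}) (under Assumptions~\ref{NC1} this is standard), or reproduce the paper's Riesz argument; Theorem~\ref{theorem4.3} alone, applied to your $m^o$, does not deliver $(\psi^o,Q^o)$.

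A minor point on Step~3: by fixing $i$ and setting $u^{-i}=u^{-i,o}$ you collapse the sum in (\ref{eq16}) to a single term and reach (\ref{eqh35}) directly. The paper instead keeps the sum, obtains a summed conditional inequality, and then uses a short contradiction argument to isolate each $i$. Your shortcut is legitimate and slightly cleaner.
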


 \begin{proof}  The derivation of {\bf (1), (2)} follows closely the basic steps  of centralized strategies in \cite{ahmed-charalambous2012a}, from which the derivation of team necessary conditions of optimality  {\bf (3)} are established.\\
   {\bf (I)}. {\bf (1)} Suppose $u^o \in {\mathbb U}_{rel}^{(N)}[0,T]$ is an optimal team decision  and $u \in {\mathbb U}_{rel}^{(N)}[0,T]$ any other admissible decision.  Since ${\mathbb U}_{rel}^i[0,T]$ is convex $\forall i \in {\mathbb Z}_N$, we have, for any $\varepsilon \in [0,1]$, $ u_t^{i,\varepsilon} \tri u_t^{i,o} + \varepsilon (u_t^i-u_t^{i,o}) \in {\mathbb U}_{rel}^i[0,T], \forall i \in {\mathbb Z}_N.$ Let $x^\varepsilon (\cdot) \equiv x^{\varepsilon}(\cdot;u^\varepsilon(\cdot)), x^{o}(\cdot)\equiv x^o(\cdot;u^o(\cdot)) \in B_{{\mathbb F}_T}^{\infty}([0,T],L^2(\Omega,{\mathbb R}^n)) $ denote the  solutions of the system  (\ref{sd1}) and  (\ref{st1i})  corresponding to $u^{\varepsilon}(\cdot)$ and $u^o(\cdot)$, respectively. Since $u^o(\cdot) \in {\mathbb U}_{rel}^{(N)}[0,T]$ is optimal it is clear that
 \begin{eqnarray}
   J(u^{\varepsilon})-J(u^o) \geq 0, \hst   \forall \varepsilon \in [0,1], \hso \forall u \in {\mathbb U}_{rel}^{(N)}[0,T].  \label{eq19}
    \end{eqnarray}

\noi Define the  Gateaux differential  of $J$ at $u^o$ in the direction $u-u^o$ by
\bes
dJ(u^o,u-u^0) \tri \lim_{ \varepsilon \downarrow 0}   \frac{ J(u^{\varepsilon})-J(u^o)}{\varepsilon} \equiv \frac{d}{d \varepsilon} J(u^\varepsilon)|_{\varepsilon =0}.
\ees
\noi Dividing the expression (\ref{eq19}) by $\varepsilon$ and letting $\varepsilon \downarrow 0$ we obtain
 \begin{eqnarray}
 dJ(u^o,u-u^0) = L(Z) +  \sum_{i=1}^N {\mathbb E} \int_0^T \ell(t,x^o(t),u^{-i,o},u_t^i-u_t^{i,o}) dt \geq 0, \hst \forall u \in {\mathbb U}_{rel}^{(N)}[0,T], \label{eq20}
 \end{eqnarray}
where $L(Z)$ is given by the functional
\begin{eqnarray}
L(Z) = {\mathbb E}\biggl\{  \int_{0}^{T} \la\ell_x(t,x^o(t),u_t^o),Z(t)\ra~ dt + \la\varphi_x(x^o(T)),Z(T)\ra\biggr\}. \label{eq21}
\end{eqnarray}

\noi  Since by Lemma~\ref{lemma4.1}, the process $Z(\cdot)\in B_{{\mathbb F}_T}^{\infty}([0,T],L^2(\Omega,{\mathbb R}^n))$ and it is also   continuous ${\mathbb P}-$a.s   it follows from  Assumptions~\ref{assumptionscost}, {\bf (B2)}, and Assumptions~\ref{NC1}, that $Z \longrightarrow L(Z)$ is a continuous linear functional. Further, by Lemma~\ref{lemma4.1}, $\eta \longrightarrow Z$ is a continuous linear map from the Hilbert space ${\cal SM}_0^2[0,T]$ to the B-space $B_{{\mathbb F}_T}^{\infty}([0,T],L^2(\Omega,{\mathbb R}^n)) $  given by the expression (\ref{eq10}). Thus the composition map  $\eta \longrightarrow Z \longrightarrow L(Z) \equiv \tilde L (\eta)$ is a continuous linear functional on ${\cal SM}_0^2[0,T].$ Then by virtue of  Riesz representation theorem for Hilbert spaces, there exists a semi martingale $m^o \in {\cal SM}_0^2[0,T]$ with intensity $(\psi^o,Q^o) \in  L_{{\mathbb F}_T}^2([0,T],{\mathbb R}^n)\times L_{{\mathbb F}_T}^2([0,T],{\cal L}({\mathbb R}^m,{\mathbb R}^n))$ such that

\begin{align} L(Z) \tri \tilde L (\eta) = (m^o,\eta)_{{\cal SM}_0^2[0,T]} = & \sum_{i=1}^N {\mathbb E} \int_{0}^{T} \la\psi^o(t), f(t,x^o(t),u^{-i,o},u_t^i-u_t^{i,o})\ra dt \nonumber \\
&+\sum_{i=1}^N {\mathbb E} \int_{0}^{T} tr (Q^{o,*}(t) \sigma(t,x^o(t),u^{-i,o},u_t^i-u_t^{i,o})) dt . \label{eq22}
 \end{align}
 This proves  {\bf (1)}.\\

\noi {\bf (2)} Substituting (\ref{eq22}) into (\ref{eq20}) we obtain the following variational equation.
 \begin{align}
 dJ(u^o,u-u^0) =  &  \sum_{i=1}^N {\mathbb E} \int_{0}^{T} \la\psi^o(t), f(t,x^o(t),u^{-i,o},u_t^i-u_t^{i,o})\ra dt   \nonumber \\
 &+\sum_{i=1}^N {\mathbb E} \int_{0}^{T} tr (Q^{o,*}(t) \sigma(t,x^o(t),u^{-i,o},u_t^i-u_t^{i,o})) dt
  \nonumber \\
 & + \sum_{i=1}^N {\mathbb E} \int_0^T \ell(t,x^o(t),u^{-i,o},u_t^i-u_t^{i,o}) dt
  \geq 0, \hst \forall u \in {\mathbb U}_{rel}^{(N)}[0,T]. \label{eq20n}
 \end{align}
 It follows from the definition of the Hamiltonian that the  inequality (\ref{eq20n})  is precisely (\ref{eq16}) along with the pair $\{ (\psi^o(t), Q^o(t)): t \in [0,T]\}$. This completes the proof of {\bf (2)}. \\

 \noi {\bf (3)} Next, we prove that the pair $\{ (\psi^{o}(t),Q^{o}(t)): t \in [0, T]\})$ is given by the solution of the  adjoint equations (\ref{adj1a}),  (\ref{eq18}). Computing  the It\^o differential   of the scalar product $\la Z,\psi^o\ra$ and  integrating this over $[0,T]$, it follows from the variational equation (\ref{eq9})  that

\begin{align}
 {\mathbb E} \la Z(T),\psi^{o}(T) \ra = &    {\mathbb  E} \biggl\{ \int_{0}^{T} \la Z(t), f_x^{*}(t,x^o(t),u_t^{o}) \psi^{o}(t) dt
  + \sigma_x^{*}(t,x^o,u_t^o;\psi^{o})dW(t) + d\psi^{o}(t) \ra  \nonumber \\
 &+ \sum_{i=1}^N \int_{0}^{T} \la f(t,x^o(t),u_t^{-i,o},u_t^i-u_t^{i,o}),\psi^{o}(t)\ra dt  \nonumber \\
 & +\sum_{i=1}^N \int_{0}^{T} \la\sigma^{*}(t,x^o(t),u^{-i,o},u_t^{i}-u_t^{i,o})\psi^{o}(t),dW(t)\ra +  \int_{0}^{T} <dZ,d\psi^{o}>(t) \biggr\}  \label{eq25n} \\
& =    {\mathbb  E} \biggl\{ \int_{0}^{T} \la Z(t), f_x^{*}(t,x^o(t),u_t^{o}) \psi^{o}(t) dt
  + d\psi^{o}(t) \ra  \nonumber \\
 &+ \sum_{i=1}^N \int_{0}^{T} \la f(t,x^o(t),u^{-i,o},u_t^i-u_t^{i,o}),\psi^{o}(t)\ra dt  \nonumber \\
 &  \int_{0}^{T} <dZ,d\psi^{o}>(t) \biggr\} ,  \label{eq26n}
 \end{align}
 where the last bracket  $<\cdot,\cdot >$ in each of the above  expressions is the quadratic variation between the two processes, and  the  stochastic integrals in (\ref{eq25n}) have zero expectation giving (\ref{eq26n}).
Since  It\^o derivatives of the variation process $\{Z(t): t \in [0, T]\}$ and the adjoint process $\{\psi^{o}(t): t \in [0,T]\}$ have the form
 \begin{align}  dZ(t) &= \hbox{bounded variation terms} + \sigma_{x}(t,x^o(t),u_t^{o}; Z(t)) dW(t)  \nonumber \\
 &+ \sum_{i=1}^N\sigma(t,x^o(t),u_t^{-i,o},u_t^i-u_t^{i,o}) dW(t), \hso Z(0)=0,  \hst t \in (0,T],  \label{gv1} \\
  d\psi^{o}(t) &= \hbox{bounded variation terms}  + Q^{o}(t) dW(t), \hst \psi^{o}(T)= \varphi_x(x^o(T)) , \label{ga1}
  \end{align}
 their quadratic variation  is given by
\begin{align}
  {\mathbb  E} \int_{0}^{T} <dZ,d\psi^{o}>(t) =  &   {\mathbb E} \Big\{ \int_{0}^{T}  tr (Q^{o,*}\sigma_x(t,x^o(t),u_t^{o};Z(t)) ) dt \Big\} \nonumber \\
  &+\sum_{i=1}^N    {\mathbb E} \Big\{ \int_{0}^{T}   tr(Q^{o,*}(t)\sigma(t,x^o(t), u_t^{-i,o},u_t^i-u_t^{i,o}))dt \Big\}. \label{eq27n}
  \end{align}
   The first term on the right hand side of the above expression is linear in $Z,$ hence  there exists a process  $
   \{V_{Q^{o}}(t): t \in[0,T]\},$  given by the following expression
    \begin{eqnarray}
    \la V_{Q^{o}}(t),Z(t) \ra \tri  tr ( Q^{o,*}(t) \sigma_x(t,x^o(t),u_t^{o};Z(t)) ). \label{eq28n}
     \end{eqnarray}
     By Assumptions~\ref{NC1}, $\sigma$ has uniformly bounded  spatial first derivative and it follows from the semi martingale representation  that $Q^{o} \in L_{{\mathbb F}_T}^2([0, T],{\cal L}({\mathbb R}^m,{\mathbb R}^n))$ and hence $V_{Q^{o}} \in L_{{\mathbb F}_T}^2([0,T],{\mathbb R}^n).$ Substituting (\ref{eq28n}) into (\ref{eq27n}) and   (\ref{eq27n}) into (\ref{eq26n}), we obtain
\begin{align}
{\mathbb  E} (Z(T),\psi^{o}(T)) =  {\mathbb E} &\biggl\{ \int_{0}^{T} \la Z(t), f_x^{*}(t,x^o(t),u_t^{o})\psi^{o} dt + V_{Q^{o}}(t) dt - Q^{o}(t)dW(t) +  d\psi^{o}(t) \ra \biggr\} \nonumber \\
& + \sum_{i=1}^N {\mathbb E}  \biggl\{ \int_{0}^{T} \la f(t,x^o(t),u_t^{-i,o}, u_t^i-u_t^{i,o}),\psi^{o}(t) \ra dt  \nonumber \\
& + tr(Q^{o,*}(t)\sigma(t,x^o(t),u_t^{-i,o},u_t^i-u_t^{i,o})) dt \biggr\}.  \label{eq29n}
\end{align}
Thus,  by setting
\begin{align}
 d\psi^{o}(t) & =- f_x^{*}(t,x^o(t),u_t^{o})\psi^{o}(t) dt -V_{Q^{o}}(t) dt  +Q^{o}(t) dW(t)
  -\ell_x(t,x^o(t),u_t^{o}) dt, \hst t \in [0,T) \label{eq30an} \\
 \psi^{o}(T)& = \varphi_x(x^o(T)),  \label{eq30n}
\end{align}
 it follows from (\ref{eq29n}) and the expression for the functional $L(\cdot)$ given by (\ref{eq21})
that
\begin{align}
 L(Z) &= {\mathbb E}  \Big\{\la Z(T),\psi^{o}(T)\ra + \int_{0}^{T} \la Z(t),\ell_x(t,x^o(t),u_t^{o})\ra dt \Big\}\nonumber \\
 &=  \sum_{i=1}^N {\mathbb E}  \Big\{ \int_{0}^{T} \la f(t,x^o(t),u_t^{-i,o},u_t^{i}-u_t^{i,o}),\psi^{o}(t)\ra
 + tr(Q^{o,*}\sigma(t,x^o(t),u_t^{-i,o},u_t^i-u_t^{i,o})) dt \Big\} .   \label{eq31n}
\end{align}
Substituting (\ref{eq31n}) into (\ref{eq20}) we again obtain (\ref{eq16}), as expected. This is precisely what was obtained by the semi martingale argument giving (\ref{eq22}). Thus  the pair $\{(\psi^{o}(t),Q^{o}(t)): t \in [0,T]\}$ must satisfy the backward stochastic differential equation (\ref{eq30an}), (\ref{eq30n}), which is precisely the adjoint equation given by (\ref{adj1a}), (\ref{eq18}). Since $\psi^{o}$ satisfies the stochastic differential equation and $T$ is finite,  it follows from the classical  theory of It\^o differential equations that $\psi^{o}$  is actually   an element of $B_{{\mathbb F}_T}^{\infty}([0,T],L^2(\Omega,{\mathbb R}^n)) \subset L_{{\mathbb F}_T}^2([0,T],{\mathbb R}^n).$ In other words, $\psi^{o}$ is more regular than predicted by semi martingale theory.  Hence, by our Assumptions on $\sigma$ it is easy to verify that   $\sigma_x^{*}(t,x^o(t),u_t^{o};\psi^{o}(t))  \in L_{{\mathbb F}_T}^2([0,T],{\cal L}( {\mathbb R}^m, {\mathbb R}^n)) $   and $$\sigma^{*}(t,x^o(t),u_t^{-i,o},u_t^{i}-u_t^{i,o})\psi^{o}(t) \in L_{{\mathbb F}_T}^2([0,T],{\mathbb R}^n), \hst i=1, \ldots, N.$$
This proves the first part of  {\bf (3)}.\\
Now we show (\ref{eqh35}). Write (\ref{eq16}) in  terms of the Hamiltonian as follows.

  \begin{align}
\sum_{i=1}^N  {\mathbb E} \Big\{ \int_0^T   {\mathbb H}(t,x^o(t),\psi^o(t),Q^o(t),u_t^{-i,o},u_t^i-u_t^{i,o}) dt \Big\}
  \geq 0,  \hst \forall u \in {\mathbb U}_{rel}^{(N)}[0,T], \label{eq32}
 \end{align}
 where the triple  $\{x^o,\psi^o,Q^o\}$ is  the unique solution of the Hamiltonian system (\ref{adj1a}), (\ref{eq18}), (\ref{st1a}), (\ref{st1i}).  By using the property of conditional expectation  then
\begin{align} \sum_{i=1}^N   {\mathbb E} \biggl\{   \int_{0}^{T}  {\mathbb  E} \Big\{  {\mathbb H}(t,x^o(t),\psi^o(t),Q^o(t),u_t^{-i,o},u_t^i-u_t^{i,o})|{\cal G}_{0, t}^i\Big\}  dt \biggr\}  \geq 0, \hso \forall u \in {\mathbb U}_{rel}^{(N)}[0,T].  \label{eq36a}
\end{align}
\noi Let  $t \in (0,T),$  $\omega \in \Omega$ and $\varepsilon >0$, and  consider the sets $I_{\varepsilon}^i \equiv [t,t+\varepsilon] \subset [0,T]$ and  $\Omega_{\varepsilon}^i (\subset \Omega) \in {\cal G}_{0,t}^i$  containing $\omega$ such that $|I_{\varepsilon}^i| \rightarrow 0$  and ${\mathbb P}(\Omega_{\varepsilon}^i) \rightarrow 0$ as $\varepsilon \rightarrow 0,$ for $i=1,2, \ldots, N$. For any sub-sigma algebra ${\cal G} \subset {\mathbb F}$, let ${\mathbb P}|_{{\cal G}}$ denote the restriction of the probability measure ${\mathbb P}$ on to the $\sigma$-algebra ${\cal G}.$   For any (vaguely) ${\cal G}_{0, t}^i-$adapted  $\nu^i \in {\cal M}_1({\mathbb A}^i),$ construct
\bea
 u_t^i = \begin{cases}  \nu^i & ~ \mbox{for}~~ (t,\omega) \in I_{\varepsilon}^i \times \Omega_{\varepsilon}^i  \\   u_t^{i,o} & \mbox{ otherwise}           \end{cases} \hst i=1,2, \ldots, N. \label{cc1}
 \eea
 Clearly, it follows from the above construction that $u^i \in {\mathbb U}_{rel}^i[0,T].$  Substituting  (\ref{cc1})  in (\ref{eq36a}) we obtain the following inequality
\begin{align}
\sum_{i=1}^N \int_{\Omega_{\varepsilon}^i\times I_{\varepsilon}^i}  {\mathbb E} \Big\{  {\mathbb H}(t,x^o(t),\psi^o(t), &Q^o(t),u_t^{-i,o},\nu^i-u_t^{i,o})|{\cal G}_{0,t}^i\Big\}dt
   \geq  0, \nonumber \\
&    \forall \nu^i \in {\cal M}_1({\mathbb A}^i),  a.e. t \in [0,T], {\mathbb P}|_{{\cal G}_{0,t}^i}- a.s., \hso i=1,2,\ldots,N.\label{eq37}
   \end{align}
   Letting $|I_{\varepsilon}^i|$ denote the Lebesgue  measure of the set $I_{\varepsilon}^i$ and dividing the above expression  by the product measure ${\mathbb P}(\Omega_{\varepsilon}^i)|I_{\varepsilon}^i|$ and letting $\varepsilon \rightarrow 0$ we arrive at the following inequality.
\begin{align}   \sum_{i=1}^N  {\mathbb E} & \Big\{  {\mathbb H}(t,x^o(t),\psi^o(t),Q^o(t),u_t^{-i,o},\nu^i)|{\cal G}_{0,t}^i\Big\}
 \geq \sum_{i=1}^N  {\mathbb E} \Big\{ {\mathbb H}(t,x^o(t),\psi^o(t),Q^o(t),u_t^{-i,o},u_t^{i,o})|{\cal G}_{0,t}^i\Big\}, \nonumber \\
 &  \forall \nu^i \in {\cal M}_1({\mathbb A}^i),  a.e. t \in [0,T], {\mathbb P}|_{{\cal G}_{0,t}^i}- a.s., i=1,2,\ldots,N. \label{eq37ccc}
\end{align}
To complete the proof  of {\bf (3)}  define
\begin{align}
g^i(t,\omega) \tri {\mathbb E} \Big\{  {\mathbb H} (t,x^o(t),\psi^o(t),Q^o(t),u_t^{-i,o},\nu^i-u_t^{i,o})|{\cal G}_{0,t}^i\Big\},  \hst  t \in [0,T], \hso \forall i \in {\mathbb Z}_N.  \label{eq37c}
   \end{align}
We shall show that
\bea
g^i(t,\omega) \geq 0,  \hso \forall \nu^i \in {\cal M}_1({\mathbb A}^i), \: a.e. \: t \in [0,T], \hso {\mathbb P}|_{{\cal G}_{0,t}^i}- a.s., \: \forall i \in {\mathbb Z}_N. \label{eq37ab}
\eea
Suppose for some $i \in {\mathbb Z}_N$, (\ref{eq37ab}) does  not hold, and let $A^i \tri \{(t,\omega): g^i(t,\omega)<0 \}$. Since $g^i(t) $ is $ {\cal G}_{0, t}^i-$measurable $\forall t \in [0,T]$   we can choose $u^i$  in  (\ref{eq37ccc}) as   $$u_t^i \tri \left\{ \begin{array}{l} \nu ~\mbox{on}~ A^i \\ u_t^{i,o} \: \mbox{outside} \: A^i \end{array} \right.$$
 together with $u_t^j =u_t^{j,o}, j \neq i, j \in {\mathbb Z}_N$. Substituting this in  (\ref{eq37ccc}) we arrive at   $\int_{A^i} g^i(t,\omega) ds  \: d{\mathbb P} \geq 0$, which contradicts the definition of $A^i$, unless $A^i$ has measure zero. Hence, (\ref{eq37ab}) holds which is precisely (\ref{eqh35}). This completes Part {\bf (I)}.\\
{\bf (II)}. By Theorem~\ref{thm-nf1} the necessary conditions for team optimality  satisfy those  in Part {\bf (I)} with ${\cal G}_{0,t}^i$ replaced by ${\cal G}_{0,t}^{z^{i,u}}$. \\

\end{proof}

The following remark helps identifying the martingale term in the adjoint process.

\begin{remark}
\label{mart}
 The arguments in the derivation of Theorem~\ref{theorem5.1} involving  the Riesz representation theorem for Hilbert space martingales,  determine the martingale term of the adjoint process $M_t = \int_{0}^t  \psi_x^o(s) \sigma(s,x^o(s),u_s^o)dW(s)$, dual to the first martingale term in the variational equation (\ref{eq9}), provided $\psi_x(\cdot)$ exists (i.e., $f_{xx}, \sigma_{xx}, \ell_{xx}, \varphi_{xx}$ exist and are uniformly bounded).   Hence, $Q$ in the adjoint equation (\ref{adj1a}), is identified as $ Q(t) \equiv \psi_x(t) \sigma(t,x(t),u_t)$. 
When the diffusion term $\sigma(\cdot,\cdot,\cdot)$  is independent of $x$, given by $\sigma(t,u)$, then since $\la V_Q(t),\zeta \ra = tr(Q^*(t) \sigma_x(t,x,u_t;\zeta))$ we have $V_{Q}(t)=0, \forall t \in [0,T]$  (e,g., the spatial derivative of the diffusion term is zero).
\end{remark}

 It is interesting to note that the necessary conditions, for a $u^o \in {\mathbb U}_{rel}^{(N)}[0,T]$ or  $u^o \in {\mathbb U}_{rel}^{(N),z^u}[0,T]$ to be  a person-by-person optimal policy,  can be derived following similar  steps  as given in  Theorem~\ref{theorem5.1}, and that these necessary conditions are the same as the necessary conditions for the team optimal strategy.
This is   stated as a Corollary.
\begin{corollary} (Necessary conditions for person-by-person optimality)
 \label{corollary5.1}
   Consider Problem~\ref{problem2} under Assumptions~\ref{assumptionscost}, \ref{NC1}.
   Under the conditions of Theorem~\ref{theorem5.1}, Part  {\bf (I)}, for  an element $ u^o \in {\mathbb U}_{rel}^{(N)}[0,T]$ with the corresponding solution $x^o \in B_{{\mathbb F}_T}^{\infty}([0,T], L^2(\Omega,{\mathbb R}^n))$ to be a person-by-person optimal strategy, it is necessary  that
 statements {\bf (1), (3)} of Theorem~\ref{theorem5.1}, and Part {\bf I},  with  statement {\bf (2)} replaced by  
  \begin{align}
  {\mathbb E} \int_{0}^{T} {\mathbb H}(t,x^o(t), \psi^o(t), Q^o(t), u_t^{-i,o},u_t^i-u_t^{i,o})dt
  \geq 0, \hst \forall u^i \in {\mathbb U}_{rel}^{i}[0,T], \hso \forall i \in {\mathbb Z}_N. \label{eq16c}
\end{align} hold.
Similar conclusions hold for strategies  ${\mathbb U}_{rel}^{(N),z^u}[0,T].$
\end{corollary}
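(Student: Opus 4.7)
The plan is to mimic the derivation of Theorem~\ref{theorem5.1} but with a restricted class of variations: for each fixed $i \in {\mathbb Z}_N$, perturb only the $i$-th player's strategy while keeping the others frozen at the candidate optimum $u^{-i,o}$. Specifically, given $u^{i,o} \in {\mathbb U}_{rel}^i[0,T]$ and any $u^i \in {\mathbb U}_{rel}^i[0,T]$, set $u_t^{i,\veps} \tri u_t^{i,o} + \veps (u_t^i - u_t^{i,o})$ and $u_t^{j,\veps} \tri u_t^{j,o}$ for $j \neq i$. By convexity of ${\mathbb U}_{rel}^i[0,T]$ this is admissible for all $\veps \in [0,1]$, and person-by-person optimality (\ref{cfd2}) gives $J(u^{i,\veps}, u^{-i,o}) - J(u^o) \geq 0$.

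First I would reproduce Lemma~\ref{lemma4.1} in this restricted setting: dividing by $\veps$ and passing to the limit yields a variational process $Z^i \in B_{{\mathbb F}_T}^{\infty}([0,T],L^2(\Omega,{\mathbb R}^n))$ satisfying exactly the linear SDE (\ref{eq9}) but with the two sums $\sum_{j=1}^N$ collapsed to the single $j=i$ term. All other structural features (continuity, representation via the random transition operator $\Psi$, realization as a continuous semi martingale) carry over verbatim since the homogeneous part depends only on $u^o$ and $x^o$. Then I would form the Gateaux differential
\begin{align*}
dJ(u^o; u^i - u^{i,o}) = L(Z^i) + {\mathbb E}\int_0^T \ell(t,x^o(t),u_t^{-i,o}, u_t^i - u_t^{i,o})\,dt \geq 0,
\end{align*}
where $L$ is the same continuous linear functional defined in (\ref{eq21}).

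Next I would invoke the Riesz representation on ${\cal SM}_0^2[0,T]$ exactly as in the proof of Theorem~\ref{theorem5.1} to obtain a semi martingale $m^o \in {\cal SM}_0^2[0,T]$ with intensity $(\psi^o, Q^o)$. The crucial point is that this adjoint pair depends only on $\{x^o, u^o\}$ through the driver of the backward SDE (\ref{adj1a})--(\ref{eq18}), not on the direction of the variation. Therefore the same $(\psi^o, Q^o)$ works simultaneously for each single-player variation, which establishes statement \textbf{(1)} and the first half of \textbf{(3)}. Assembling the variational inequality in Hamiltonian form yields (\ref{eq16c}) for each $i \in {\mathbb Z}_N$.

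Finally, I would localize (\ref{eq16c}) to obtain the pointwise conditional inequality (\ref{eqh35}) by repeating, for each $i$ separately, the spike-variation argument at the end of the proof of Theorem~\ref{theorem5.1}: pick ${\cal G}_{0,t}^i$-measurable sets $\Omega_\veps^i$ and intervals $I_\veps^i$, take $u_t^i = \nu^i$ on $I_\veps^i \times \Omega_\veps^i$ and $u_t^i = u_t^{i,o}$ elsewhere, divide by ${\mathbb P}(\Omega_\veps^i)|I_\veps^i|$ and pass to the limit; a contradiction argument on the set $A^i \tri \{g^i < 0\}$ with $g^i$ defined as in (\ref{eq37c}) rules out any violation. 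The extension to feedback information structures ${\mathbb U}_{rel}^{(N),z^u}[0,T]$ follows by the density/equivalence argument of Theorem~\ref{thm-nf1}, replacing ${\cal G}_{0,t}^i$ by ${\cal G}_{0,t}^{z^{i,u}}$. No genuine obstacle arises beyond what has already been handled in Theorem~\ref{theorem5.1}: the main simplification is that the sum over $j$ in the variational equation reduces to a single term, and the main subtlety is observing that the adjoint SDE is independent of the variation direction, so the same $(\psi^o,Q^o)$ serves all $N$ per-player inequalities.
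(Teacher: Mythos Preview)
Your proposal is correct and follows exactly the approach the paper indicates: the paper's own proof simply states that the derivation proceeds as in Theorem~\ref{theorem5.1} with the sole modification that variations are taken in the direction of a single player's policy while the remaining players are held at $u^{-i,o}$. Your detailed expansion of this---the single-term variational SDE for $Z^i$, the observation that the adjoint pair $(\psi^o,Q^o)$ depends only on $(x^o,u^o)$ and not on the variation direction, and the per-$i$ localization to obtain (\ref{eqh35})---is precisely what the paper's terse proof sketch calls for.
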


 \begin{proof}  Primarily, the derivation is based on the same procedure as that of Theorem~\ref{theorem5.1}. The only difference is,  that in this case,  the variations of the DM policies are carried out in the direction of individual members while the rest of the  members carry optimal policy.  \\

\end{proof}

Clearly, every team optimal strategy for Problem~\ref{problem1} is  a person-by-person optimal strategy for Problem~\ref{problem2}.  Hence   person-by-person optimality is  weaker than team optimality. By comparing the statements of Theorem~\ref{theorem5.1} and Corollary~\ref{corollary5.1}, it is clear that statements {\bf (1)} and {\bf (3)} coincide, while the only difference are  the variational inequalities  (\ref{eq16}) and  (\ref{eq16c}).  However, (\ref{eq16c}) implies (\ref{eq16}), and it can be shown that (\ref{eq16}) implies (\ref{eq16c}). Indeed, if (\ref{eq16c}) is violated for some $j \in {\mathbb Z}_N$ then by choosing all other $u^i=u^{i,o}, \forall i \in {\mathbb Z}_N, i\neq j,$  the right side of (\ref{eq16}) will be negative, which is a contradiction.   This observation is new, and has not been documented in the static team game literature \cite{krainak-speyer-marcus1982a}.

\begin{remark}
\label{remark5.2}
From the above necessary conditions one can deduce the necessary conditions for full centralized information   and partial centralized  information. We state these conditions below.

{\bf (1)  Centralized Full Information Structures.}  Consider   Problem~\ref{problem1}  under the conditions of Theorem~\ref{theorem5.1}, Part {\bf (I)}, and assume $u^i$ are adapted to ${\mathbb F}_T$, $\forall  i\in {\mathbb Z}_N$. The necessary conditions are given by  the following point wise almost sure inequalities
 \begin{eqnarray}
  {\mathbb H}(t,x^o(t),\psi^o(t),Q^o(t),\mu )    \geq {\mathbb  H}(t,x^o(t),\psi^o(t),Q^o(t),u_t^{o}), \nonumber \\
\:  \forall \mu \in {\cal M}_1({\mathbb A}^{(N)}),  \: a.e. \: t \in [0,T], \: {\mathbb P}-a.s.,   \label{eq35a}  \end{eqnarray}

%
where $\{x^o(t), \psi^o(t), Q^o(t): t \in [0,T]\}$ are the solutions of the Hamiltonian system (\ref{st1a}), (\ref{st1i}), (\ref{adj1a}), (\ref{eq18}). This corresponds to the classical case \cite{yong-zhou1999}. \\
Moreover, if the strategies are based on centralized  state feedback information, that is, $u^i$ are adapted to the information ${\cal G}_T^{x^u}, \forall i \in {\mathbb Z}_N$, then under the conditions of Theorem~\ref{theorem5.1}, Part {\bf (II)} the previous optimality conditions are replaced by
\begin{eqnarray}
 {\mathbb E} \Big\{  {\mathbb H}(t,x^o(t),\psi^o(t),Q^o(t),\mu )|{\cal G}_{0,t}^{x^o}\Big\} \geq   {\mathbb E}\Big\{  {\mathbb H}(t,x^o(t),\psi^o(t),Q^o(t),u_t^o)|{\cal G}_{0,t}^{x^o}\Big\}, \nonumber \\
 \forall \mu \in {\cal M}_1({\mathbb A}^{(N)}), a.e. t \in [0,T], {\mathbb P}|_{{\cal G}_{0,t}^{x^o}}-a.s. \label{eq35abc}
 \end{eqnarray}

{\bf (2) Centralized Partial Information Structures.}  Consider Problem~\ref{problem1}  under the conditions of Theorem~\ref{theorem5.1}, Part {\bf (I)} and Part {\bf (II)} and suppose that each  $u^i$ is  adapted to   the centralized partial information ${\cal G}_T\subset {\mathbb F}_T$, and   ${\cal G}_T^{z^u} \subset {\cal F}_{0,T}^{x^u}$, respectively. Then the necessary condition is given by

\begin{eqnarray}
 {\mathbb E} \Big\{  {\mathbb H}(t,x^o(t),\psi^o(t),Q^o(t),\mu )|{\cal K}_{0,t}  \Big\} \geq   {\mathbb E}\Big\{  {\mathbb H}(t,x^o(t),\psi(t),Q(t),u_t^o)|{\cal K}_{0,t}\Big\}, \nonumber  \\
\forall \mu \in {\cal M}_1({\mathbb A}^{(N)}), a.e. t \in [0,T], {\mathbb P}|_{{\cal K}_{0,t}}-a.s.
 \end{eqnarray}
 where ${\cal K}_{0,t}$ is a sub-sigma algebra of any of the sigma algebras indicated above.

\end{remark}

Finally, we mention two important results derived in \cite{ahmed-charalambous2012a} which have direct extensions to the current paper. The first addresses  existence of measurable relaxed team optimal strategy associated with the minimization of the Hamiltonian, and the second addresses existence of realizable relaxed strategies by regular strategies.

\subsection{Sufficient Conditions of Optimality}
\label{sufficient}
In this section, we show that the necessary conditions of optimality (\ref{eqh35}) are  also sufficient under certain  convexity conditions. \\

\begin{theorem} (Sufficient conditions for team optimality)
 \label{theorem5.1s}
   Consider Problem~\ref{problem1} and suppose Assumptions~\ref{assumptionscost}, \ref{NC1} hold.
   Under the conditions of Theorem~\ref{theorem5.1}, Part {\bf (I)},  let $( u^o(\cdot), x^o(\cdot))$ denote any control-state pair  (decision-state)  and let $\psi^o(\cdot)$ the corresponding adjoint processes. \\
   Suppose the
 following conditions hold:

\begin{description}

\item[(C4)]  ${\mathbb H} (t, \cdot,\zeta,M,\nu),   t \in  [0, T]$ is convex in $\xi \in {\mathbb R}^n$;

 \item[(C5)] $\varphi(\cdot)$ is convex in $\xi \in {\mathbb R}^n$.
\end{description}
\noi Then $(u^o(\cdot),x^o(\cdot))$ is  team optimal  if it satisfies (\ref{eqh35}). In other words, necessary conditions are also sufficient.
For feedback strategies ${\mathbb U}_{rel}^{(N),z^u}[0,T]$ the same  statement holds under the conditions of Theorem~\ref{theorem5.1}, Part {\bf (II)}.
\end{theorem}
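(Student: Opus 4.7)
The plan is to adapt the classical maximum-principle sufficiency argument to the decentralized relaxed setting, using the adjoint pair $(\psi^o,Q^o)$ obtained from Theorem~\ref{theorem5.1} and the pointwise Hamiltonian minimality (\ref{eqh35}). Fix any $u\in\mathbb{U}_{rel}^{(N)}[0,T]$ with corresponding state $x$ solving (\ref{sd1}). Convexity of $\varphi$ from (C5) gives $\varphi(x(T))-\varphi(x^o(T)) \geq \langle\varphi_x(x^o(T)),x(T)-x^o(T)\rangle = \langle\psi^o(T),x(T)-x^o(T)\rangle$. Apply It\^o's formula to $t\mapsto\langle\psi^o(t),x(t)-x^o(t)\rangle$, using the forward difference $d(x-x^o)=[f(t,x,u)-f(t,x^o,u^o)]dt+[\sigma(t,x,u)-\sigma(t,x^o,u^o)]dW$ and the backward adjoint (\ref{adj1a})--(\ref{eq18}). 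Taking expectation kills the Brownian martingales and the quadratic covariation produces $\mathrm{tr}(Q^{o,*}[\sigma(t,x,u)-\sigma(t,x^o,u^o)])$. Collecting terms via the definition (\ref{h1}) of $\mathbb{H}$ and adding the running cost difference yields
\[J(u)-J(u^o) \;\geq\; \mathbb{E}\!\int_0^T\!\Bigl\{\mathbb{H}(t,x,\psi^o,Q^o,u)-\mathbb{H}(t,x^o,\psi^o,Q^o,u^o)-\langle\mathbb{H}_x(t,x^o,\psi^o,Q^o,u^o),x-x^o\rangle\Bigr\}dt.\]

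Next, (C4) is invoked. Convexity of $\mathbb{H}(t,\cdot,\psi^o,Q^o,u)$ in $\xi$ gives the tangent-plane inequality $\mathbb{H}(t,x,\psi^o,Q^o,u)\geq\mathbb{H}(t,x^o,\psi^o,Q^o,u)+\langle\mathbb{H}_x(t,x^o,\psi^o,Q^o,u),x-x^o\rangle$. In the relaxed formulation $\nu\mapsto\mathbb{H}(t,\xi,\zeta,M,\nu)$ is linear, so that $\mathbb{H}_x(t,x^o,\psi^o,Q^o,\cdot)$ is also linear in the control argument; combined with (C4) this upgrades the pointwise inequality to the joint subdifferential form at $(x^o,u^o)$, namely
\[\mathbb{H}(t,x,\psi^o,Q^o,u)-\mathbb{H}(t,x^o,\psi^o,Q^o,u^o)-\langle\mathbb{H}_x(t,x^o,\psi^o,Q^o,u^o),x-x^o\rangle \;\geq\; \mathbb{H}(t,x^o,\psi^o,Q^o,u)-\mathbb{H}(t,x^o,\psi^o,Q^o,u^o).\]
Substituting into the previous bound reduces the problem to establishing
\[\mathbb{E}\!\int_0^T\!\bigl[\mathbb{H}(t,x^o,\psi^o,Q^o,u)-\mathbb{H}(t,x^o,\psi^o,Q^o,u^o)\bigr]dt \;\geq\; 0.\]

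This final nonnegativity is where (\ref{eqh35}) is deployed. Because the drift and diffusion of the distributed system (\ref{eq1ds}) split additively across the players' controls $(\eta^1,\dots,\eta^N)$, integration against the product measure $u_t=u_t^1\otimes\cdots\otimes u_t^N$ makes $\mathbb{H}(t,x^o,\psi^o,Q^o,u)$ additively separable in $(u_t^1,\dots,u_t^N)$ modulo an $(x^o,\psi^o,Q^o)$-measurable term independent of the controls. Telescoping therefore gives
\[\mathbb{H}(t,x^o,\psi^o,Q^o,u)-\mathbb{H}(t,x^o,\psi^o,Q^o,u^o) = \sum_{i=1}^N\Bigl[\mathbb{H}(t,x^o,\psi^o,Q^o,u_t^{-i,o},u_t^i)-\mathbb{H}(t,x^o,\psi^o,Q^o,u_t^o)\Bigr].\]
Conditioning the $i$-th summand on $\mathcal{G}_{0,t}^i$ and applying (\ref{eqh35}) with $\nu^i=u_t^i$ makes each summand $\mathcal{G}_{0,t}^i$-conditionally nonnegative; Fubini's theorem and the tower property then deliver the claim, giving $J(u)\geq J(u^o)$ for every admissible competitor and hence team optimality.

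Part (II), for feedback information $u^o\in\mathbb{U}_{rel}^{(N),z^u}[0,T]$, reduces to Part (I) via Theorem~\ref{thm-nf1} (or Theorem~\ref{geeq}), which identifies the infimum of $J$ over feedback strategies with its infimum over the equivalent nonanticipative class via density and vague continuity; the argument above is then repeated with $\mathcal{G}_{0,t}^{z^{i,u}}$ in place of $\mathcal{G}_{0,t}^i$. The principal obstacle is the per-coordinate reduction in paragraph three: the functional $(u^1,\ldots,u^N)\mapsto\mathbb{H}(t,x^o,\psi^o,Q^o,u^1\otimes\cdots\otimes u^N)$ is multilinear, hence not jointly convex in general on the product of probability simplices, so the telescoping step genuinely depends on the additively-separable control structure inherited from the distributed dynamics (\ref{eq1ds}); without it the coordinate-wise inequalities (\ref{eqh35}) would not suffice to control the full team deviation $u-u^o$.
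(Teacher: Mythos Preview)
Your proof follows the same skeleton as the paper's: convexity of $\varphi$, It\^o's formula applied to $\langle\psi^o, x-x^o\rangle$, and then the joint convexity of $\mathbb{H}$ in $(\xi,\nu)$ (convex in $\xi$ by (C4), linear in $\nu$ on ${\cal M}_1({\mathbb A}^{(N)})$) to reduce everything to the inequality
\[
\mathbb{E}\int_0^T\bigl[\mathbb{H}(t,x^o,\psi^o,Q^o,u_t)-\mathbb{H}(t,x^o,\psi^o,Q^o,u_t^o)\bigr]\,dt\;\geq\;0.
\]
At this point the paper simply asserts that the last inequality ``follows from (\ref{eqh35})'' and stops. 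You go further and correctly observe that the map $(u^1,\ldots,u^N)\mapsto\mathbb{H}(t,x^o,\psi^o,Q^o,u^1\otimes\cdots\otimes u^N)$ is multiaffine rather than jointly convex on the product of probability simplices, so the per-coordinate conditional inequalities (\ref{eqh35}) do not by themselves bound a simultaneous deviation of all players; you then invoke the additively separable control structure of the distributed dynamics (\ref{eq1ds}) to obtain the telescoping identity and close the argument. This is a genuine subtlety that the paper does not spell out.

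There is, however, a gap in your fix. The Hamiltonian (\ref{h1}) contains the running cost $\ell(t,\xi,\nu)$ in addition to the drift and diffusion terms, and you only argue additive separability for $f$ and $\sigma$. For your telescoping identity
\[
\mathbb{H}(t,x^o,\psi^o,Q^o,u_t)-\mathbb{H}(t,x^o,\psi^o,Q^o,u_t^o)
=\sum_{i=1}^N\Bigl[\mathbb{H}(t,x^o,\psi^o,Q^o,u_t^{-i,o},u_t^i)-\mathbb{H}(t,x^o,\psi^o,Q^o,u_t^o)\Bigr]
\]
to hold, $\ell(t,x,\cdot)$ must also be additively separable in $(u^1,\ldots,u^N)$, which is neither part of the paper's hypotheses nor stated in your argument. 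Without it, the $\ell$-contribution to $\mathbb{H}$ remains genuinely multiaffine after integration against the product measure, and the telescope fails. You should either add separability of $\ell$ as an explicit hypothesis, or flag that the theorem as stated appears to require this additional structural assumption for the passage from the per-player inequalities (\ref{eqh35}) to the full-team bound.
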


 \begin{proof}  We shall prove the sufficiency  under the conditions of Theorem~\ref{theorem5.1}, {\bf (I)}, that is, the admissible strategies ${\mathbb U}_{rel}^{(N)}[0,T]$, since the derivation is precisely the same for the case Part {\bf (II)}.
 Let $u^o \in {\mathbb U}_{rel}^{(N)}[0,T]$ denote a candidate for the optimal team decision and $u \in {\mathbb U}_{rel}^{(N)}[0,T]$
any other decision. Then
 \begin{align}
 J(u^o) -J(u)=
    {\mathbb E} \biggl\{   \int_{0}^{T}  \Big(\ell(t,x^o(t),u_t^{o})  -\ell(t,x(t),u_t) \Big)  dt
     + \Big(\varphi(x^o(T)) - \varphi(x(T))\Big)  \biggr\}    . \label{s1}
  \end{align}
By the convexity of $\varphi(\cdot)$ then
\bea
\varphi(x(T))-\varphi(x^o(T)) \geq \la \varphi_x(x^o(T)), x(T)-x^o(T)\ra . \label{s2}
\eea
Substituting (\ref{s2}) into (\ref{s1}) yields
 \begin{align}
 J(u^o) -J(u)\leq  {\mathbb E} \Big\{ & \la \varphi_x(x^o(T)),   x^o(T) - x(T)\ra \Big\} \nonumber \\
 + &    {\mathbb E} \biggl\{    \int_{0}^{T}  \Big(\ell(t,x^o(t),u_t^{o})  -\ell(t,x(t),u_t) \Big)  dt   \biggr\}    . \label{s3}
  \end{align}
Applying the Ito differential rule to $\la\psi^o,x-x^o\ra$ on the interval $[0,T]$ and then taking expecation we obtain the following equation.
\begin{align}
{\mathbb E} \Big\{ & \la \psi^o(T),   x(T) - x^o(T)\ra \Big\}
 =  {\mathbb E} \Big\{  \la \psi^o(0),   x(0) - x^o(0)\ra \Big\} \nonumber \\
& +{\mathbb E} \Big\{ \int_{0}^{T}  \la-f_x^{*}(t,x^o(t),u_t^{o})\psi^o(t)dt-V_{Q^o}(t)-\ell_x(t,x^o(t),u_t^{o}), x(t)-x^o(t)\ra dt  \Big\} \nonumber \\
&+ {\mathbb E} \Big\{   \int_{0}^{T}  \la \psi^o(t), f(t,x(t),u_t)- f(t,x^o(t),u_t^{o})\ra dt \Big\} \nonumber \\
& + {\mathbb E} \Big\{  \int_{0}^{T}     tr (Q^{*,o}(t)\sigma(t,x(t),u_t)-Q^{*,o}(t)\sigma(t,x^o(t),u_t^{o}))dt \Big\} \nonumber \\
&=  -  {\mathbb E} \Big\{ \int_{0}^{T} \la {\mathbb H}_x(t,x^o(t),\psi^o(t), Q^o(t),u_t^{o}), x(t)-x^o(t)\ra dt \nonumber \\
& + {\mathbb E} \Big\{  \int_{0}^{T} \la \psi^o(t), f(t,x(t),u_t)-f(t,x^o(t),u_t^{o})\ra dt \Big\} \nonumber \\
& + {\mathbb E} \Big\{  \int_{0}^{T}  tr (Q^{*,o}(t)\sigma(t,x(t),u_t)-Q^{*,o}(t)\sigma(t,x^o(t),u_t^{o}))dt \Big\} \label{s4}
\end{align}
Note that $\psi^o(T)=\varphi_x(x^o(T))$. Substituting (\ref{s4}) into (\ref{s3}) we obtain
 \begin{align}
 J(u^o) -J(u)  \leq &    {\mathbb E} \Big\{ \int_{0}^{T}  \Big[ {\mathbb H}(t,x^o(t),\psi^o(t), Q^o(t),u_t^{o}) -   {\mathbb H}(t,x(t),\psi^o(t), Q^o(t),u_t^{})\Big]dt \Big\} \nonumber \\
 -&    {\mathbb E} \Big\{ \int_{0}^{T} \la {\mathbb H}_x(t,x^o(t),\psi^o(t), Q^o(t),u_t^{o}), x^o(t)-x(t)\ra dt   \Big\}    . \label{s5}
  \end{align}
Since by hypothesis $ {\mathbb H}$ is convex in $\xi \in {\mathbb R}^n$ and linear in $\nu \in {\cal M}_1({\mathbb A}^{(N)})$, ${\mathbb H}$ is convex in both $(\xi, \nu) \in  {\mathbb R}^n \times {\cal M}_1({\mathbb A}^{(N)})$.  Using this fact in (\ref{s5}) we readily obtain

 \begin{align}
  J(u^o) -J(u)  \leq
     {\mathbb E}     \int_{0}^{T}  <{\mathbb H}(t,x^o(t),\psi^o(t), Q^o(t),\cdot), u_t^{o}(\cdot)-u_t(\cdot)> dt   \leq 0, \hst \forall u \in {\mathbb U}_{rel}^{(N)}[0,T],\label{s5a}
  \end{align}
where the last inequality follows from (\ref{eqh35}).    This proves that  $u^o$ optimal and hence  the necessary conditions are also sufficient.

\end{proof}

 Under conditions  similar to those of  Theorem~\ref{theorem5.1s}, we can verify that a strategy is person-by-person optimal  for Problem~\ref{problem2} if it satisfies (\ref{eqh35}); this is stated as a corollary. Indeed, the  necessary conditions for team optimality and person-by-person optimality are equivalent, and person-by-person optimality implies team optimality.

 \begin{theorem} (Sufficient conditions for person-by-person optimality)
 \label{theorem5.1sa}
   Consider Problem~\ref{problem2} and suppose Assumptions~ \ref{assumptionscost}, \ref{NC1} hold.  Under the conditions of Theorem~\ref{theorem5.1},  Part {\bf (I)}, let $(u^o(\cdot), x^o(\cdot))$ denote any  control-state  pair and let $\psi^o(\cdot)$ the corresponding adjoint processes. \\
   Suppose the  conditions  of Theorem~\ref{theorem5.1s}, {\bf (C4), (C5)} hold.\\
  \noi Then $(u^o(\cdot),x^o(\cdot))$ is player-by-player  optimal if it satisfies (\ref{eqh35}). \\
For feedback strategies ${\mathbb U}_{rel}^{(N),z^u}[0,T]$ the above statements hold under the conditions of Theorem~\ref{theorem5.1}, Part {\bf (II)}.
\end{theorem}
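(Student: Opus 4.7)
The plan is to mirror the derivation of Theorem~\ref{theorem5.1s}, with the crucial modification that the variation is restricted to a single player at a time, and then to invoke the pointwise conditional inequality (\ref{eqh35}) componentwise. Fix an arbitrary index $i \in {\mathbb Z}_N$ and an arbitrary admissible strategy $u^i \in {\mathbb U}_{rel}^i[0,T]$. Form the modified profile $\tilde{u} \tri (u^i, u^{-i,o}) \in {\mathbb U}_{rel}^{(N)}[0,T]$, and let $x$ denote the solution of (\ref{sd1}) corresponding to $\tilde{u}$, while $x^o$ and $(\psi^o, Q^o)$ correspond to $u^o$.

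First I would write
\begin{align*}
J(u^o) - \tilde{J}(u^i, u^{-i,o}) = {\mathbb E}\biggl\{ \int_0^T \bigl(\ell(t,x^o(t),u_t^o) - \ell(t,x(t),\tilde{u}_t)\bigr) dt + \varphi(x^o(T)) - \varphi(x(T))\biggr\}
\end{align*}
and use the convexity of $\varphi$ (assumption {\bf (C5)}) to bound $\varphi(x^o(T)) - \varphi(x(T)) \leq \langle \varphi_x(x^o(T)), x^o(T) - x(T)\rangle$. Next, applying the It\^o rule to $\langle \psi^o(t), x(t) - x^o(t)\rangle$ on $[0,T]$, using the terminal condition $\psi^o(T) = \varphi_x(x^o(T))$ together with the backward adjoint equation (\ref{adj1a}) for $(\psi^o,Q^o)$ and the forward equations for $x, x^o$, exactly as in the derivation leading to (\ref{s4})--(\ref{s5}), yields
\begin{align*}
J(u^o) - \tilde{J}(u^i,u^{-i,o}) \leq\, & {\mathbb E}\int_0^T \bigl[{\mathbb H}(t,x^o(t),\psi^o(t),Q^o(t),u_t^o) - {\mathbb H}(t,x(t),\psi^o(t),Q^o(t),\tilde{u}_t)\bigr] dt \\
& - {\mathbb E}\int_0^T \langle {\mathbb H}_x(t,x^o(t),\psi^o(t),Q^o(t),u_t^o),\, x^o(t) - x(t)\rangle\, dt.
\end{align*}

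Since ${\mathbb H}$ is linear in its relaxed control argument $\nu \in {\cal M}_1({\mathbb A}^{(N)})$ and, by {\bf (C4)}, convex in $\xi$, it is jointly convex in $(\xi,\nu)$. The convexity inequality applied to the two terms above gives
\begin{align*}
J(u^o) - \tilde{J}(u^i,u^{-i,o}) \leq {\mathbb E}\int_0^T \bigl[{\mathbb H}(t,x^o(t),\psi^o(t),Q^o(t),u_t^o) - {\mathbb H}(t,x^o(t),\psi^o(t),Q^o(t), u_t^{-i,o}, u_t^i)\bigr] dt.
\end{align*}
Now the key step: since $u_t^i$ is by construction ${\cal G}_{0,t}^i$-measurable (so it is a legitimate choice of $\nu^i$ in (\ref{eqh35})), taking conditional expectations with respect to ${\cal G}_{0,t}^i$ inside the integral and applying the pointwise optimality inequality (\ref{eqh35}) with $\nu^i = u_t^i$ yields that the integrand is almost surely nonpositive. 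Hence the right-hand side is $\leq 0$, so $J(u^o) \leq \tilde{J}(u^i, u^{-i,o})$. Since $i$ and $u^i$ were arbitrary, this is precisely the person-by-person optimality condition (\ref{cfd2}).

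For the feedback case $u^o \in {\mathbb U}_{rel}^{(N),z^u}[0,T]$ under the conditions of Theorem~\ref{theorem5.1}, Part {\bf (II)}, the identical argument applies with ${\cal G}_{0,t}^i$ replaced by ${\cal G}_{0,t}^{z^{i,u}}$, invoking Theorem~\ref{thm-nf1} to justify restricting to this information structure. The only potential subtlety, and likely the main point to verify carefully, is that under the unilateral perturbation $(u^i, u^{-i,o})$ the state trajectory $x$ still satisfies (\ref{sd1}) so that the It\^o-type computation with the adjoint $(\psi^o,Q^o)$ (which was constructed against $u^o$) remains valid; this is automatic because the computation only uses that both $x$ and $x^o$ solve the forward equation under their respective controls and that the quadratic variation cross-terms are handled by the $V_{Q^o}$ term appearing in (\ref{adj1a}), exactly as in the team optimality proof.
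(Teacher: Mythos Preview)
Your proposal is correct and follows precisely the approach the paper intends: the paper's own proof is simply ``The proof is similar to that of Theorem~\ref{theorem5.1s},'' and you have carried out exactly that adaptation---replaying the It\^o/convexity computation (\ref{s1})--(\ref{s5a}) with the unilateral perturbation $\tilde u=(u^i,u^{-i,o})$ and then invoking (\ref{eqh35}) componentwise via the tower property. One minor wording point: after conditioning on ${\cal G}_{0,t}^i$ it is the \emph{conditional expectation} of the integrand that is nonpositive a.s., not the integrand itself, but this is exactly what you need once you take full expectation.
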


\begin{proof} The proof  is  similar to that of  Theorem~\ref{theorem5.1s}.
\end{proof}

\section{Optimality Conditions for Regular Strategies }
\label{regular}
In the development of the necessary and sufficient conditions of optimality given in the previous section we have given conditions  which  assert the existence of optimal decisions from the class of relaxed decisions ${\mathbb U}_{rel}^{(N)}[0,T]$ and ${\mathbb U}_{rel}^{(N),z^u}[0,T]$  in Theorem~\ref{theorem3.2}. \\
The main observation of this section is that, if optimal regular decisions  exist from the admissible class ${\mathbb U}_{reg}^{(N)}[0,T] \subset {\mathbb U}_{rel}^{(N)}[0,T]$ (or the feedback class)  then the necessary and sufficient conditions of Theorem~\ref{theorem5.1} and Theorem~\ref{theorem5.1s} can be specialized to the class of decision strategies which are simply Dirac measures concentrated $\{u_t^o : t \in [0,T]\} \in {\mathbb U}_{reg}^{(N)}[0,T]$ or ${\mathbb U}_{reg}^{(N),z^u}[0,T]$. The important advantage of the theory of relaxed controls is that the necessary conditions of optimality  for ordinary controls follow readily from those of relaxed controls without requiring differentiability of the Hamiltonian or equivalently the drift and the diffusion coefficients  $f, \sigma $ with respect to  the control variables.

\noi Thus we simply state the necessary and sufficient conditions of optimality  for regular decentralized decision strategies  which follow as a corollary of Theorem~\ref{theorem5.1}, \ref{theorem5.1s}  by simply specializing to regular decision strategies given by Dirac measures along the regular decision strategies leading to the following Hamiltonian
\bes
 {\cal  H}: [0, T] \times {\mathbb R}^n\times {\mathbb R}^n\times {\cal L}({\mathbb R}^m,{\mathbb R}^n)\times {\mathbb A}^{(N)} \longrightarrow {\mathbb R},
\ees
   where
   \begin{align}
    {\cal H} (t,\xi,\zeta,M,\nu) \tri \la f(t,\xi,\nu),\zeta\ra + tr (M^*\sigma(t,\xi,\nu))
     + \ell(t,\xi,\nu),  \hst  t \in  [0, T]. \label{dh1}
    \end{align}

\begin{theorem}(Regular team optimality conditions)
\label{theorem7.1}
 Consider Problem~\ref{problem1} under the Assumptions of Theorem~\ref{theorem5.1} with decisions (or controls)  from the regular class taking  values in   ${\mathbb A}^i,$  a closed, bounded and convex subset of ${\mathbb R}^{d_i}$, $\forall  i\in {\mathbb Z}_N$.

\begin{description}

\item[(I)] Let  ${\mathbb F}_T$ denote  the filtration generated by $x(0)$ and the Brownian motion $W$.

\noi {\bf Necessary Conditions.}    For  an element $ u^o \in {\mathbb U}_{reg}^{(N)}[0,T]$ with the corresponding solution $x^o \in B_{{\mathbb F}_T}^{\infty}([0,T], L^2(\Omega,{\mathbb R}^n))$ to be team optimal, it is necessary  that
the following hold.

\begin{description}

\item[(1)]  There exists a semi martingale  $m^o \in {\cal SM}_0^2[0,T]$ with the intensity process $({\psi}^o,Q^o) \in  L_{{\mathbb F}_T}^2([0,T],{\mathbb R}^n)\times L_{{\mathbb F}_T}^2([0,T],{\cal L}({\mathbb R}^m,{\mathbb R}^n))$.

 \item[(2) ]  The variational inequality is satisfied:

\begin{align}     \sum_{i=1}^N {\mathbb  E} \Big\{ \int_0^T   \Big(  {\cal H} (t,x^o(t),\psi^o(t), Q^{o}(t), u_t^i, u_t^{-i,o})&-{\cal H} (t,x^o(t),\psi^o(t), Q^{o}(t), u_t^{o}) \Big) dt \Big\}\geq 0,     \nonumber \\
& \forall u \in {\mathbb U}_{reg}^{(N)}[0,T].  \label{eqd16}    \end{align}

\item[(3)]  The process $({\psi}^o,Q^o) \in  L_{{\mathbb F}_T}^2([0,T],{\mathbb R}^n)\times L_{{\mathbb F}_T}^2([0,T],{\cal L}({\mathbb R}^m,{\mathbb R}^n))$ is a unique solution of the backward stochastic differential equation (\ref{adj1a}), (\ref{eq18}), with  ${\mathbb H}$ replaced by ${\cal H}$ such that $u^o \in {\mathbb U}_{reg}^{(N)}[0,T]$ satisfies  the  point wise almost sure inequalities with respect to the $\sigma$-algebras ${\cal G}_{0,t}^i   \subset {\mathbb F}_{0,t}$, $ t\in [0, T], i=1, 2, \ldots, N:$

\begin{align}
  {\mathbb E} \Big\{ \Big( {\cal H}(t,x^o(t),& \psi^o(t),Q^o(t),u_t^i, u_t^{-i,o})- {\cal H}(t,x^o(t),  \psi^o(t),Q^o(t), u_t^{o}) \Big) |{\cal G}_{0, t}^i \Big\}   \geq  0, \nonumber  \\
&\forall u^i \in {\mathbb A}^i,  a.e. t \in [0,T], {\mathbb P}|_{{\cal G}_{0,t}^i}- a.s., i=1,2,\ldots, N.   \label{eqhd35}   \end{align}

 \noi {\bf Sufficient Conditions.}    Let $(u^o(\cdot), x^o(\cdot))$ denote an admissible decision and state  pair and  $\psi^o(\cdot)$ the corresponding adjoint processes. \\
   Suppose the conditions {\bf (C4), (C5)} holds and in addition

\begin{description}

\item[(C6)]  ${\cal H}(t, \xi,\zeta,M, \cdot),   t \in  [0, T]$,  is convex in $u \in {\mathbb A}^{(N)}$;

\end{description}

\end{description}

\noi Then $(x^o(\cdot),u^o(\cdot))$ is optimal if it satisfies (\ref{eqhd35}).

\item[(II)] Suppose ${\mathbb F}_T$ is the filtration generated by $x(0)$ and the Brownian motion $W$, and Assumptions~\ref{a-nf1} hold with decision policies  from the regular class. The necessary and sufficient conditions for a feedback  policy  $ u^o \in {\mathbb U}_{reg}^{(N),z^u}[0,T]$ to be optimal are given by the statements under Part {\bf (I)} with ${\cal G}_{0,t}^i$ replaced by ${\cal G}_{0,t}^{z^{i,u}}, \forall t \in [0,T]$.

\end{description}

\end{theorem}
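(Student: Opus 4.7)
The plan is to obtain Theorem~\ref{theorem7.1} as a direct corollary of Theorems~\ref{theorem5.1} and \ref{theorem5.1s} by specializing the relaxed theory to Dirac measures. For each $i\in{\mathbb Z}_N$, every regular strategy $u^i\in{\mathbb U}_{reg}^i[0,T]$ embeds into ${\mathbb U}_{rel}^i[0,T]$ via $t\mapsto \delta_{u_t^i}(\cdot)$; hence ${\mathbb U}_{reg}^{(N)}[0,T]\subset {\mathbb U}_{rel}^{(N)}[0,T]$. Because ${\mathbb A}^i$ is closed, bounded and convex, the perturbation $u_t^{i,\varepsilon}\tri u_t^{i,o}+\varepsilon(u_t^i-u_t^{i,o})$ remains in ${\mathbb A}^i$ for every $\varepsilon\in[0,1]$, so ${\mathbb U}_{reg}^i[0,T]$ is convex and Lemma~\ref{lemma4.1} applies verbatim to the variational process $Z$. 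Moreover, for $\nu=(\delta_{u^1},\ldots,\delta_{u^N})$, integration of $f,\sigma,\ell$ against $\times_i\delta_{u^i}$ collapses the relaxed Hamiltonian ${\mathbb H}(t,\xi,\zeta,M,\nu)$ to the regular Hamiltonian ${\cal H}(t,\xi,\zeta,M,u^1,\ldots,u^N)$ of (\ref{dh1}).

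For the necessary conditions in Part (I), items (1) and (2) are immediate restatements of the corresponding items of Theorem~\ref{theorem5.1} evaluated at Dirac-concentrated $\nu$; the adjoint equation (\ref{adj1a}),(\ref{eq18}) is unchanged once ${\mathbb H}$ is replaced by ${\cal H}$ because the partial derivatives in the state and the martingale integrand involve only the fixed control trajectory $u^o$. To obtain the pointwise inequality (\ref{eqhd35}), substitute $\nu^i=\delta_{v^i}$ for an arbitrary $v^i\in{\mathbb A}^i$ into (\ref{eqh35}); the conditional expectation on ${\cal G}_{0,t}^i$ is unaffected by this pointwise specialization and yields (\ref{eqhd35}) almost surely in $(t,\omega)$. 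Conversely, one could rerun the needle-variation construction (\ref{cc1}) directly with regular strategies, using the convexity of ${\mathbb A}^i$ to localize the perturbation on $I_\varepsilon^i\times\Omega_\varepsilon^i$.

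For the sufficient conditions the It\^o-differential identity in the proof of Theorem~\ref{theorem5.1s} carries over without change, giving
\begin{align*}
J(u^o)-J(u)\leq {\mathbb E}\int_0^T\Big[{\cal H}(t,x^o(t),\psi^o(t),Q^o(t),u_t^o)-{\cal H}(t,x(t),\psi^o(t),Q^o(t),u_t)\Big]dt \\
-{\mathbb E}\int_0^T\langle{\cal H}_x(t,x^o(t),\psi^o(t),Q^o(t),u_t^o),x^o(t)-x(t)\rangle dt.
\end{align*}
The decisive point is that in the relaxed setting ${\mathbb H}$ is automatically affine in $\nu$, whereas ${\cal H}$ need not be affine in $u$; therefore hypothesis (C6) must be added to (C4),(C5) so that ${\cal H}(t,\cdot,\zeta,M,\cdot)$ is jointly convex in $(\xi,u)\in{\mathbb R}^n\times{\mathbb A}^{(N)}$. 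Under this joint convexity the right-hand side is bounded above by ${\mathbb E}\int_0^T\sum_i[{\cal H}(t,x^o,\psi^o,Q^o,u_t^o)-{\cal H}(t,x^o,\psi^o,Q^o,u_t^i,u_t^{-i,o})]dt$, and conditioning on ${\cal G}_{0,t}^i$ together with (\ref{eqhd35}) shows each summand is nonpositive, hence $J(u^o)\leq J(u)$.

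For Part (II) the feedback case is handled by invoking Theorem~\ref{thm-nf1} (or Theorem~\ref{geeq}): under Assumption~\ref{a-nf1} the infimum over ${\mathbb U}_{reg}^{(N),z^u}[0,T]$ coincides with that over $\overline{\mathbb U}_{reg}^{(N)}[0,T]$, so the necessary and sufficient conditions of Part (I) apply with ${\cal G}_{0,t}^i$ replaced by ${\cal G}_{0,t}^{z^{i,u}}$. The main obstacle I anticipate is the sufficiency step: showing that the decomposition of the telescoping increment ${\cal H}(\cdot,u_t^o)-{\cal H}(\cdot,u_t)$ into the $N$ single-player differences ${\cal H}(\cdot,u_t^o)-{\cal H}(\cdot,u_t^i,u_t^{-i,o})$ is controlled by joint convexity in $u$ in a way consistent with the decentralized $\sigma$-algebras ${\cal G}_{0,t}^i$. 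This forces the extra hypothesis (C6) and is the place where the regular theory genuinely diverges from the relaxed theory, since the relaxed Hamiltonian's linearity in $\nu$ allowed this step for free.
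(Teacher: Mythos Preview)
Your approach---specializing Theorems~\ref{theorem5.1} and \ref{theorem5.1s} to Dirac measures and explaining why the extra hypothesis \textbf{(C6)} is forced (the regular Hamiltonian is not affine in $u$, unlike the relaxed ${\mathbb H}$ in $\nu$)---is exactly the paper's one-line proof, only more carefully articulated.

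One point to tighten in the sufficiency sketch: the bound you claim, namely that the right-hand side of the displayed inequality is dominated by $\sum_i\big[{\cal H}(\cdot,u_t^o)-{\cal H}(\cdot,u_t^i,u_t^{-i,o})\big]$, does \emph{not} follow from joint convexity in $(\xi,u)$ alone. Take $N=2$ and ${\cal H}(u^1,u^2)=u^1u^2$: the discrepancy between the full difference and your sum of one-player deviations is $-(u^1-u^{1,o})(u^2-u^{2,o})$, which has no definite sign. What joint convexity actually delivers is the subgradient inequality
\[
{\cal H}(t,x^o,\psi^o,Q^o,u^o)-{\cal H}(t,x,\psi^o,Q^o,u)-\langle{\cal H}_x,x^o-x\rangle\le\sum_i\langle p^i,u^{i,o}-u_t^i\rangle
\]
for some $p^i\in\partial_{u^i}{\cal H}(t,x^o,\psi^o,Q^o,u^o)$; then (\ref{eqhd35}), being a pointwise minimum over ${\mathbb A}^i$, gives ${\mathbb E}\big[\langle p^i,u_t^i-u_t^{i,o}\rangle\mid{\cal G}_{0,t}^i\big]\ge 0$ for every admissible $u^i$, which is exactly what closes the argument. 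You were right to flag this step as the main obstacle; the paper's own proof of Theorem~\ref{theorem5.1s} handles it via linearity of ${\mathbb H}$ in $\nu$, which for product measures is really multilinearity and requires a telescoping that is likewise not spelled out there.
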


\begin{proof} Follows from Theorem~\ref{theorem5.1}, \ref{theorem5.1s} by simply replacing relaxed controls by Dirac measures concentrated at $\{u_t^o : t \in [0,T]\} \in {\mathbb U}_{reg}^{(N)}[0,T]$ or ${\mathbb U}_{reg}^{(N),z^u}[0,T]$.
%
%
%


\end{proof}

 Person-by-person optimality conditions for regular decision strategies follow from their  relaxed  counterparts, as discussed above. Therefore we simply state the results as a corollary.

\begin{corollary}(Person-by-person optimality)
\label{corollarypbpd}
Consider Problem~\ref{problem2} under the conditions of Theorem~\ref{theorem7.1}. Then the necessary and sufficient conditions of Theorem~\ref{theorem7.1} hold with the variational inequality (\ref{eqd16}) replaced by
\begin{align}     {\mathbb  E} \Big\{ \int_0^T  \Big(& {\cal H}(t,x^o(t),\psi^o(t), Q^{o}(t),u_t^i, u_t^{-i,o} ) \nonumber \\
&- {\cal H}(t,x^o(t),\psi^o(t), Q^{o}(t),u_t^{i,o}, u_t^{-i,o}) \Big) dt \Big\}\geq 0, \hst \forall u^i \in {\mathbb U}_{reg}^{i}[0,T], \hso \forall i \in {\mathbb Z}_N. \label{eqd16pp}
\end{align}
 Similar conclusions hold for strategies ${\mathbb  U}_{reg}^{z^{i,u}}[0,T]$.
\end{corollary}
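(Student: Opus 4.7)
The plan is to specialize Corollary~\ref{corollary5.1} and Theorem~\ref{theorem5.1sa} (the person-by-person relaxed results) to regular strategies given by Dirac measures concentrated along $\{u_t^o : t \in [0,T]\}$, exactly as Theorem~\ref{theorem7.1} does for team optimality. The Hamiltonian ${\mathbb H}$ reduces to ${\cal H}$ defined in (\ref{dh1}), so the adjoint system (\ref{adj1a}), (\ref{eq18}) with ${\mathbb H}$ replaced by ${\cal H}$ remains unchanged because it depends only on $(u^o, x^o)$ and not on which notion of optimality is in force.

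For the \textbf{necessary conditions}, I would repeat the variational argument of Theorem~\ref{theorem5.1} one player at a time. Fix $i \in {\mathbb Z}_N$ and $u^i \in {\mathbb U}_{reg}^i[0,T]$, and let $u^{i,\varepsilon} \triangleq u^{i,o} + \varepsilon(u^i - u^{i,o})$ while keeping $u^{-i,o}$ unchanged; convexity of ${\mathbb A}^i$ ensures $u^{i,\varepsilon} \in {\mathbb U}_{reg}^i[0,T]$. Person-by-person optimality gives $\tilde J(u^{i,\varepsilon}, u^{-i,o}) - J(u^o) \geq 0$. Dividing by $\varepsilon$ and letting $\varepsilon \downarrow 0$ yields the $i$-th variational equation, which by the semi martingale representation and the identical adjoint $(\psi^o, Q^o)$ becomes exactly the single-index term of (\ref{eqd16pp}); doing this for each $i$ proves (\ref{eqd16pp}). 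The pointwise inequality (\ref{eqhd35}) is then obtained by the same localization argument using sets $\Omega_\varepsilon^i \times I_\varepsilon^i \in {\cal G}_{0,t}^i \otimes {\cal B}([0,T])$ as in the last step of Theorem~\ref{theorem5.1}; the contradiction argument via the set $A^i = \{g^i(t,\omega) < 0\}$ still applies because the candidate perturbation in (\ref{cc1}) only changes the $i$-th coordinate. For Part (II) the same reasoning goes through with ${\cal G}_{0,t}^i$ replaced by ${\cal G}_{0,t}^{z^{i,u}}$, invoking Theorem~\ref{thm-nf1}.

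For the \textbf{sufficient conditions}, the argument is a one-coordinate adaptation of Theorem~\ref{theorem5.1s}. Fix $i$ and any $u^i \in {\mathbb U}_{reg}^i[0,T]$; let $x(\cdot)$ be the trajectory driven by $(u^i, u^{-i,o})$. Applying It\^o's rule to $\langle \psi^o, x - x^o\rangle$ on $[0,T]$ and using convexity of $\varphi$ together with conditions (C4)--(C6) yields, precisely as in (\ref{s5})--(\ref{s5a}),
\begin{align*}
\tilde J(u^i, u^{-i,o}) - J(u^o) \geq {\mathbb E}\int_0^T \Big({\cal H}(t, x^o(t), \psi^o(t), Q^o(t), u_t^i, u_t^{-i,o}) - {\cal H}(t, x^o(t), \psi^o(t), Q^o(t), u_t^{o})\Big)\, dt.
\end{align*}
Taking conditional expectation with respect to ${\cal G}_{0,t}^i$ inside the integral and invoking the pointwise inequality (\ref{eqhd35}) shows that the right-hand side is nonnegative, hence $J(u^o) \leq \tilde J(u^i, u^{-i,o})$ for every $u^i \in {\mathbb U}_{reg}^i[0,T]$ and every $i \in {\mathbb Z}_N$, which is person-by-person optimality. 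The feedback case is treated analogously under the hypotheses of Theorem~\ref{theorem7.1}, Part (II).

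The main obstacle is essentially bookkeeping rather than analytical: one must check that the same adjoint pair $(\psi^o, Q^o)$ produced by the team variational argument also serves the person-by-person variational argument. This is straightforward because the adjoint backward SDE is determined pathwise by $(u^o, x^o)$, and the single-index variation only alters the forcing term in the semi martingale $\eta$ of Lemma~\ref{lemma4.1} without changing $(\psi^o, Q^o)$. Given that, the equivalence noted after Corollary~\ref{corollary5.1} between the two families of variational inequalities shows that (\ref{eqd16pp}) and the team version (\ref{eqd16}) differ only in their admissible directions, yet both yield the same pointwise conditional inequality (\ref{eqhd35}).
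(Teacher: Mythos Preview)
Your proposal is correct and follows essentially the same approach as the paper: specialize the relaxed person-by-person results (Corollary~\ref{corollary5.1} and Theorem~\ref{theorem5.1sa}) to regular strategies by replacing relaxed controls with Dirac measures concentrated at $\{u_t^o:t\in[0,T]\}$, exactly parallel to how Theorem~\ref{theorem7.1} is deduced from Theorems~\ref{theorem5.1} and \ref{theorem5.1s}. The paper's proof is a one-line statement of this specialization, whereas you have additionally spelled out the single-index variational and sufficiency arguments; this extra detail is accurate but not required beyond what is already contained in the relaxed case.
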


\begin{proof}  Follows from Corollary~\ref{corollarypbpd} by simply replacing relaxed controls by Dirac measures concentrated at $\{u_t^o : t \in [0,T]\} \in {\mathbb U}_{reg}^{(N)}[0,T]$ or ${\mathbb U}_{reg}^{(N),z^u}[0,T]$.

\end{proof}

The optimality conditions are derived based on the assumption that  the filtration ${\mathbb F}_T$ is generated by the system Brownian motions $\{W(t): t \in [0,T]\}$. When this condition does not hold the optimality conditions are  slightly  modified as discussed in the next remark.

\begin{remark}
\label{martgeneral}
Suppose ${\mathbb F}_T$ is not generated by Brownian motions $\{W(t): t \in [0,T]\}$ but stochastic integrals with respect to $W(\cdot)$ are ${\mathbb F}_T-$martingales. Then by invoking the variation of the semi martingale representation due to Kunita-Watanabe (for the derivation see \cite{dellacherie-meyer1980}) we have the following. If  $(i): L^2(\Omega, {\mathbb F}, {\mathbb P})$ is separable and  (ii):  ${\mathbb F}_T$ is right continuous having left limits,  then any square integrable ${\mathbb F}_T$ martingale has the decomposition
\begin{eqnarray}
 m(t) = m(0) + \int_0^t v(s) ds + \int_0^t \Sigma(s) dW(s) +  M(t), \hst t \in [0,T], \label{eq12a}
  \end{eqnarray}
   for some $v \in L_{{\mathbb F}_T}^2([0,T],{\mathbb R}^n)$, $\Sigma \in L_{{\mathbb F}_T}^2([0,T],{\cal L}({\mathbb R}^m,{\mathbb R}^n))$,  ${\mathbb R}^n-$valued ${\mathbb  F}_{0,0}-$measurable  random variable  $m(0)$ having finite second moment, and  $\{M(t): t \in [0,T]\}$ right continuous square integrable ${\mathbb F}_T$ martingale, which is orthogonal to $\{W(t): t \in [0,T]\}$. This  representation is unique. Further, the  stochastic integrals    $\int_0^t \Sigma(s) dW(s)$ and $\int_{0}^t \Gamma(s)dM(s)$ are  orthogonal martingales for $L_2$ integrands.
   In this case  the adjoint equation given by   (\ref{adj1a}), (\ref{eq18})  is replaced by
\begin{align}
d\psi (t)  =&- {\mathbb H}_x (t,x(t),\psi(t),Q(t),u_t) dt + Q(t)dW(t) + dM(t),  \hst t \in [0,T) \label{adjg1} \\
 \psi(T)=&   \varphi_x(x(T)).  \label{adjg2}
 \end{align}

   \end{remark}

 In view of the results obtained, we confirm that  there are no limitations in applying classical theory of optimization to decentralized systems.  Rather,  the challenge is  in the  implementation of  the new variational Hamiltonians and the computation the optimal strategies for specific examples. In Part II \cite{charalambous-ahmedFIS_Partii2012} of this two-part paper, we shall apply these  optimality conditions to investigate various linear and nonlinear distributed stochastic team games and obtain closed form expressions for the optimal strategies for some of them.

\section{Conclusions and Future Work}
\label{cf}
In this paper we have   considered team games for distributed stochastic dynamical decision systems, with decentralized noiseless information patterns for each DM, under relaxed and deterministic strategies.   Necessary and sufficient optimality conditions with respect to  team optimality and person-by-person optimality criteria are derived, based on Stochastic Pontryagin's minimum principle, while we also discussed existence of the optimal strategies.\\
The methodology is very general, and applicable to many areas. However, several additional issues remain to be investigated. Below, we provide a short list.

\begin{description}

\item[(F1)] For team games with regular strategies and  non-convex action spaces ${\mathbb A}^i, i =1,2,\ldots, N$, if the diffusion coefficients depend on the decision variables then it is necessary to derive optimality conditions based on second-order variations. The methodology presented to derive the necessary conditions of optimality can be easily extended to cover this case as well.

\item[(F2)] The derivation of optimality conditions can be used in other type of games such as Nash-equilibrium games with decentralized information structures for each DM, and minimax games.

\item[(F3)]   The  optimality conditions can be extended to distributed stochastic dynamical  decision systems driven by both continuous Brownian motion processes and jump processes, such as L\'evy or Poisson jump processes, by following the procedure of centralized strategies in \cite{ahmed-charalambous2012a}.

\item[(F4)] The optimality conditions can be applied to specific examples with  decentralized  noiseless information structures. Some of these are presented in the companion paper \cite{charalambous-ahmedFIS_Partii2012}.

\item[(F5)] The methodology can be extended to cover decentralized partial (noisy) information structures.

\end{description}

\bibliographystyle{IEEEtran}
\bibliography{bibdata}

\begin{thebibliography}{10}
\providecommand{\url}[1]{#1}
\csname url@samestyle\endcsname
\providecommand{\newblock}{\relax}
\providecommand{\bibinfo}[2]{#2}
\providecommand{\BIBentrySTDinterwordspacing}{\spaceskip=0pt\relax}
\providecommand{\BIBentryALTinterwordstretchfactor}{4}
\providecommand{\BIBentryALTinterwordspacing}{\spaceskip=\fontdimen2\font plus
\BIBentryALTinterwordstretchfactor\fontdimen3\font minus
  \fontdimen4\font\relax}
\providecommand{\BIBforeignlanguage}[2]{{%
\expandafter\ifx\csname l@#1\endcsname\relax
\typeout{** WARNING: IEEEtran.bst: No hyphenation pattern has been}%
\typeout{** loaded for the language `#1'. Using the pattern for}%
\typeout{** the default language instead.}%
\else
\language=\csname l@#1\endcsname
\fi
#2}}
\providecommand{\BIBdecl}{\relax}
\BIBdecl

\bibitem{fleming-rischel1975}
W.~Fleming and R.~Rischel, \emph{Deterministic and Stochastic Optimal
  Control}.\hskip 1em plus 0.5em minus 0.4em\relax Springer Verlag, 1975.

\bibitem{elliott1977}
R.~J. Elliott, ``The optimal control of stochastic system,'' \emph{SIAM Journal
  on Control and Optimization}, vol.~15, no.~5, pp. 756--778, 1977.

\bibitem{bismut1978}
J.~M. Bismut, ``An introductory approach to duality in optimal stochastic
  control,'' \emph{SIAM Review}, vol.~30, pp. 62--78, 1978.

\bibitem{ahmed1981}
N.~U. Ahmed and K.~L. Teo, \emph{Optimal Control of Distributed Parameter
  Systems}.\hskip 1em plus 0.5em minus 0.4em\relax Elsevier North Holland, New
  York, Oxford, 1981.

\bibitem{elliott1982}
R.~J. Elliott, \emph{Stochastic Calculus and Applications}.\hskip 1em plus
  0.5em minus 0.4em\relax Springer-Verlag, 1982.

\bibitem{elliott-kohlmann1994}
R.~J. Elliott and M.~Kohlmann, ``The second order minimum principle and adjoint
  process,'' \emph{Stochastics \& Stochastic Reports}, vol.~46, pp. 25--39,
  1994.

\bibitem{peng1990}
S.~Peng, ``A general stochastic maximum principle for optimal control
  problems,,'' \emph{SIAM Journal on Control and Optimization}, vol.~28, no.~4,
  pp. 966--979, 1990.

\bibitem{yong-zhou1999}
J.~Yong and X.~Y. Zhou, \emph{Stochastic Controls, Hamiltonian Systems and
  {HJB} Equations}.\hskip 1em plus 0.5em minus 0.4em\relax Springer-Verlag,
  1999.

\bibitem{ahmed2006}
N.~U. Ahmed, \emph{Dynamic Systems and Control with Applications}.\hskip 1em
  plus 0.5em minus 0.4em\relax World Scientific, New Jersey London, Singapore
  Beijing Shanghai, Hong Kong, Taipei, Chenna, 2006.

\bibitem{bensoussan1983}
A.~Bensoussan, ``Maximum principle and dynamic programming approaches of the
  optimal control of partially observed diffusions,'' \emph{Stochastics},
  vol.~9, no. 169-222, 1983.

\bibitem{bensoussan1992a}
------, \emph{Stochastic Control of Partially Observable Systems}.\hskip 1em
  plus 0.5em minus 0.4em\relax Cambridge University Press, 1982.

\bibitem{charalambous-hibey1996}
C.~D. Charalambous and J.~L. Hibey, ``Minimum principle for partially
  observable nonlinear risk-sensitive control problems using measure-valued
  decompositions,'' \emph{Stochastics \& Stochastic Reports}, pp. 247--288,
  1996.

\bibitem{ahmed-charalambous2007}
N.~U. Ahmed and C.~D. Charalambous, ``Minimax games for stochastic systems
  subject to relative entropy uncertainty: Applications to {SDE}'s on {H}ilbert
  spaces,'' \emph{Journal of Mathematics of Control, Signals and System},
  vol.~19, pp. 197--216, 2007.

\bibitem{ahmed2005}
N.~U. Ahmed, ``Optimal relaxed controls for systems governed by impulsive
  differential inclusions,'' \emph{Nonlinear Functional Analysis \&
  Applications}, vol.~10, no.~3, pp. 427--460, 2005.

\bibitem{witsenhausen1968}
H.~S. Witsenhausen, ``A counter example in stochastic optimum control,''
  \emph{SIAM Journal on Control and Optimization}, vol.~6, no.~1, pp. 131--147,
  1968.

\bibitem{witsenhausen1971}
------, ``Separation of estimation and control for discrete time systems,'' in
  \emph{Proceedings of the IEEE}, 1971, pp. 1557--1566.

\bibitem{ho-chu1972}
Y.-C. Ho and K.-C. Chu, ``Team decision theory and information structures in
  optimal control problems-part i,'' \emph{IEEE Transactions on Automatic
  Control}, vol.~17, no.~1, pp. 15--22, 1972.

\bibitem{kurtaran-sivan1973}
B.-Z. Kurtaran and R.~Sivan, ``Linear-{Q}uadratic-{G}aussian control with
  one-step-delay sharing pattern,'' \emph{IEEE Transactions on Automatic
  Control}, pp. 571--574, 1974.

\bibitem{sandell-athans1974}
N.~R. Sandell and M.~Athans, ``Solution of some nonclassical {LQG} stochastic
  decision problems,'' \emph{IEEE Transactions on Automatic Control}, vol.~19,
  no.~2, pp. 108--116, 1974.

\bibitem{kurtaran1975}
B.-Z. Kurtaran, ``A concice derivation of the {LQG} one-step-delay sharing
  problem solution,'' \emph{IEEE Transactions on Automatic Control}, vol.~20,
  no.~6, pp. 808--810, 1975.

\bibitem{varaiya-walrand1978}
P.~Varaiya and J.~Walrand, ``On delay sharing patterns,'' \emph{IEEE
  Transactions on Automatic Control}, vol.~23, no.~3, pp. 443--445, 1978.

\bibitem{ho1980}
Y.~Ho, ``Team decision theory and information structures,'' \emph{Proceedings
  of IEEE}, vol.~68, pp. 644--655, 1980.

\bibitem{krainak-speyer-marcus1982a}
J.~Krainak, J.~L. Speyer, and S.~I. Marcus, ``Static team problems-part {I}:
  Sufficient conditions and the exponential cost criterion,'' \emph{IEEE
  Transactions on Automatic Control}, vol.~27, no.~4, pp. 839--848, 1982.

\bibitem{krainak-speyer-marcus1982b}
------, ``Static team problems-part {II}: Affine control laws, projections,
  algorithms, and the {LEGT} problem,'' \emph{IEEE Transactions on Automatic
  Control}, vol.~27, no.~4, pp. 848--859, 1982.

\bibitem{bansal-basar1987}
R.~Bansar and T.~Basar, ``Stochastic teams with nonclassical information
  revisited: When is an affine law optimal,'' \emph{IEEE Transactions on
  Automatic Control}, vol.~32, no.~6, pp. 554--559, 1987.

\bibitem{waal-vanschuppen2000}
P.~R. Wall and J.~H. van Schuppen, ``A class of team problems with discrete
  action spaces: Optimality conditions based on multimodularity,'' \emph{SIAM
  Journal on Control and Optimization}, vol.~38, no.~3, pp. 875--892, 2000.

\bibitem{bamieh-voulgaris2005}
B.~Bamieh and P.~Voulgaris, ``A convex characterization of distributed control
  problems in spatially invariant systems with communication constraints,''
  \emph{Systems and Control Letters}, vol.~54, no.~6, pp. 575--583, 2005.

\bibitem{aicardi-davoli-minciardi1987}
M.~Aicardi, F.~Davoli, and R.~Minciardi, ``Decentralized optimal control of
  markov chains with a common past information,'' \emph{IEEE Transactions on
  Automatic Control}, vol.~32, no.~11, pp. 1028--1031, 1987.

\bibitem{nayyar-mahajan-teneketzis2011}
A.~Nayyar, A.~Mahajan, and D.~Teneketzis, ``Optimal control strategies in
  delayed sharing information structures,'' \emph{IEEE Transactions on
  Automatic Control}, vol.~56, no.~7, pp. 1606--1620, 2011.

\bibitem{vanschuppen2011}
J.~H. van Schuppen, ``Control of distributed stochastic systems-introduction,
  problems, and approaches,'' in \emph{International Proceedings of the IFAC
  World Congress}, 2011.

\bibitem{lessard-lall2011}
L.~Lessard and S.~Lall, ``A state-space solution to the two-player optimal
  control problems,'' in \emph{Proceedings of 49th Annual Allerton Conference
  on Communication, Control and Computing}, 2011.

\bibitem{mahajan-martins-rotkowitz-yuksel2012}
A.~Mahajan, N.~Martins, M.~Rotkowitz, and S.~Yuksel, ``Information structures
  in optimal decentralized control,'' in \emph{In Proceedings of the 51st
  Conference on Decision and Control}, 2011.

\bibitem{marschak1955}
J.~Marschak, ``Elements for a theory of teams,'' \emph{Management Science},
  vol.~1, no.~2, 1955.

\bibitem{radner1962}
R.~Radner, ``Team decision problems,'' \emph{The Annals of Mathematical
  Statistics}, vol.~33, no.~3, pp. 857--881, 1962.

\bibitem{marschak-radner1972}
J.~Marschak and R.~Radner, \emph{Economic Theory of Teams}.\hskip 1em plus
  0.5em minus 0.4em\relax New Haven: Yale University Pres, 1972.

\bibitem{ahmed-charalambous2012a}
N.~U. Ahmed and C.~D. Charalambous, ``Stochastic minimum principle for
  partially observed systems subject to continuous and jump diffusion processes
  and driven by relaxed controls,'' \emph{SIAM Journal on Control and
  Optimization}, 2012, submitted, June 2012.

\bibitem{vanschuppen2012}
J.~H. van Schuppen, O.~Boutin, P.~L. Kempker, J.~Komenda, T.~Masopust,
  N.~Pambakian, and A.~C.~M. Ran, ``Control of distributed systems: Tutorial
  and overview,'' \emph{European Journal on Control}, pp. 579--602, 2012.

\bibitem{charalambous-ahmedFIS_Partii2012}
C.~D. Charalambous and N.~U. Ahmed, ``Centralized versus decentralized team
  games of distributed stochastic differential decision systems with noiseless
  information structures-{P}art {II}: Applications,'' \emph{Preprint}, 2012,
  draft: October 2012.

\bibitem{liptser-shiryayev1977}
R.~Liptser and A.~Shiryayev, \emph{Statistics of Random Processes Vol.1}.\hskip
  1em plus 0.5em minus 0.4em\relax Springer-Verlag New York, 1977.

\bibitem{bensoussan1981}
A.~Bensoussan, \emph{Lecture on Stochastic Control, Lecture Notes in
  Mathematics}.\hskip 1em plus 0.5em minus 0.4em\relax Springer-Verlag, Berlin,
  1982.

\bibitem{dellacherie-meyer1980}
C.~Dellacherie and P.~Meyer, \emph{Probabilites et Potentiel}.\hskip 1em plus
  0.5em minus 0.4em\relax Hermann, Paris, •, ch. Chapitres {I} and {IV}.

\end{thebibliography}

\end{document}